\documentclass[a4paper]{article}
\usepackage{amsmath, amsfonts, amsthm, amssymb, amscd}
\usepackage{latexsym}
\usepackage{graphicx, mathptmx}
\usepackage{mathtools, stackengine, mleftright}
\usepackage[mathcal]{euscript} 
\usepackage{newtxtext, newtxmath, dsfont, mathrsfs, accents, verbatim} 
\usepackage{enumerate, color}
\usepackage{epsfig}
\usepackage{delarray}
\usepackage[titletoc,title]{appendix}
\usepackage{color}
\setlength{\topmargin}{0cm}
\setlength{\oddsidemargin}{0cm}
\setlength{\evensidemargin}{0cm}
\setlength{\textheight}{23cm}
\setlength{\textwidth}{16cm}
\newtheorem{theorem}{Theorem}
\newtheorem{definition}{Definition}
\newtheorem{remark}{Remark}
\newtheorem{lemma}{Lemma}
\newtheorem{proposition}{Proposition}

\numberwithin{equation}{section}

\newcommand{\divx}{\mathop{\mathrm{div}}}

\newcommand{\supp}{\mathop{\mathrm{supp}}}
\newcommand{\esssup}{\mathop{\mathrm{ess~sup}}}
\newcommand{\Lip}{\mathop{\mathrm{Lip}}}

\newcommand{\dist}{\mathop{\mathrm{dist}}}
\newcommand{\trace}{\mathop{\mathrm{tr}}}
\allowdisplaybreaks[2]
\hyphenpenalty=10000\relax
\exhyphenpenalty=10000\relax
\sloppy
\title{Continuity of derivatives of a convex solution to a perturbed one-Laplace equation by $p$-Laplacian}
\author{Yoshikazu Giga\footnote{Graduate School of Mathematical Sciences, The University of Tokyo, Japan. \textit{Email}: \texttt{labgiga@ms.u-tokyo.ac.jp}}\quad and  \quad
        Shuntaro Tsubouchi  \footnote{Graduate School of Mathematical Sciences, The University of Tokyo, Japan. \textit{Email}: \texttt{tsubos@ms.u-tokyo.ac.jp}} }

\begin{document}
\maketitle
\begin{abstract}
We consider a one-Laplace equation perturbed by $p$-Laplacian with $1<p<\infty$.
 We prove that a weak solution is continuously differentiable $(C^1)$ if it is convex.
 Note that similar result fails to hold for the unperturbed one-Laplace equation.
 The main difficulty is to show $C^1$-regularity of the solution at the boundary of a facet where the gradient of the solution vanishes.
 For this purpose we blow-up the solution and prove that its limit is a constant function by establishing a Liouville-type result, which is proved by showing a strong maximum principle. Our argument is rather elementary since we assume that the solution is convex. A few generalization is also discussed.
\end{abstract}
\bigbreak
\textbf{Keywords} $C^{1}$-regularity, one-Laplace equation, strong maximum principle

\section{Introduction}
We consider a one-Laplace equation perturbed by $p$-Laplacian of the form
\begin{equation}\label{crystal model eq again}
L_{b,p} u = f \quad\text{in}\quad \Omega
\end{equation}
with
\[
	L_{b,p} u := -b\Delta_{1} u - \Delta_{p} u,
\]
where
\[
	\Delta_1 u := \operatorname{div} \left( \nabla u/|\nabla u| \right), \quad
	\Delta_p u = \operatorname{div} \left( |\nabla u|^{p-2} \nabla u \right)
\]
in a domain $\Omega$ in $\mathbb{R}^n$, \(\nabla u=(\partial_{x_{1}}u,\,\dots,\,\partial_{x_{n}}u)\) with \(\partial_{x_{j}}u=\partial u/\partial x_{j}\) for a function \(u=u(x_{1},\,\dots,\,x_{n})\), and \(\divx X=\sum\limits_{i=1}^{n}\partial_{x_{i}}X_{i}\) for a vector field \(X=(X_{1},\,\dots,\,X_{n})\).
 The constants $b>0$ and $p\in(1,\infty)$ are given and fixed.
 It has been a long-standing open problem whether its weak solution is $C^1$ up to a facet, the place where the gradient $\nabla u$ vanishes, even if $f$ is smooth.
 This is a non-trivial question since a weak solution to the unperturbed one-Laplace equation, i.e., $-\Delta_1 u=f$ may not be $C^1$.
 This is because the ellipticity degenerates in the direction of $\nabla u$ for $\Delta_1 u$.
 Our goal in this paper is to solve this open problem under the assumption that a solution is convex.
 \subsection{Main theorems and our strategy}\label{Subsect Main theorems}
Throughout the paper, we assume $f\in L^q_{\mathrm{loc}}(\Omega)$ ($n<q\leq\infty$), i.e., $|f|^q$ is locally integrable in $\Omega$.
 Our main result is
\begin{theorem}[\(C^{1}\)-regularity theorem]\label{C1 regularity}
Let $u$ be a convex weak solution to (\ref{crystal model eq again}) with $f\in L^q_{\mathrm{loc}}(\Omega)\,(n<q\le\infty)$. Then $u$ is in $C^{1}(\Omega)$.
\end{theorem}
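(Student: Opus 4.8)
The plan is to reduce Theorem~\ref{C1 regularity} to the assertion that $u$ is differentiable at \emph{every} point of $\Omega$. This suffices because a convex function that is differentiable at each point of an open set automatically has a continuous gradient there (the subdifferential of a convex function has closed graph, so a single-valued, locally bounded subdifferential is continuous), which is precisely $u\in C^{1}(\Omega)$. Fix $x_{0}\in\Omega$ and distinguish two cases according to whether $0\notin\partial u(x_{0})$ or $0\in\partial u(x_{0})$; by convexity the latter means $x_{0}$ lies in the closure of the facet $\{\nabla u=0\}$, i.e.\ $x_{0}$ is a global minimiser of $u$.

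Case $0\notin\partial u(x_{0})$: by upper semicontinuity and compact-valuedness of $\partial u$, for $\delta$ small the set $\bigcup_{x\in\overline{B_{\delta}(x_{0})}}\partial u(x)$ is a compact subset of $\mathbb{R}^{n}\setminus\{0\}$, and since $u$ is locally Lipschitz, $\nabla u$ takes values in it a.e.\ on $B_{\delta}(x_{0})$. Thus on $B_{\delta}(x_{0})$ the function $u$ is a bounded weak solution of $\operatorname{div}a(\nabla u)=-f$ with $a(\xi)=b\xi/|\xi|+|\xi|^{p-2}\xi$ smooth and uniformly elliptic on the relevant range of gradients (modify $a$ elsewhere so it is globally smooth and nondegenerate; $u$ still solves the modified equation). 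Since $f\in L^{q}_{\mathrm{loc}}$ with $q>n$, classical quasilinear elliptic regularity gives $u\in C^{1,\alpha}_{\mathrm{loc}}(B_{\delta}(x_{0}))$, so $u$ is differentiable at $x_{0}$.

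Case $0\in\partial u(x_{0})$: here I would blow up, setting $u_{r}(x):=\bigl(u(x_{0}+rx)-u(x_{0})\bigr)/r$ for small $r>0$. Each $u_{r}$ is convex, vanishes at $0$, is nonnegative (because $0\in\partial u(x_{0})$), and locally uniformly Lipschitz; since difference quotients of a convex function are monotone in $r$, $u_{r}\to u_{0}$ locally uniformly as $r\downarrow0$, where $u_{0}(x)=u'(x_{0};x)$ is convex, positively $1$-homogeneous, nonnegative, with $u_{0}(0)=0$, and $\nabla u_{r}\to\nabla u_{0}$ a.e.\ and in $L^{s}_{\mathrm{loc}}$ for every $s<\infty$. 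Because $L_{b,p}u_{r}(x)=rf(x_{0}+rx)$ and $\|rf(x_{0}+r\,\cdot\,)\|_{L^{q}(B_{R})}=O(r^{1-n/q})\to0$ (using $q>n$), one passes to the limit in the weak formulation and obtains that $u_{0}$ is a weak solution of $L_{b,p}u_{0}=0$ on $\mathbb{R}^{n}$. Now the decisive point: since $u_{0}$ is convex, its level sets bound convex sets and so have nonnegative generalised mean curvature, whence $\Delta_{1}u_{0}\ge0$ as a distribution; the equation then gives $-\Delta_{p}u_{0}=b\,\Delta_{1}u_{0}\ge0$, i.e.\ $u_{0}$ is $p$-superharmonic. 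As $u_{0}\ge0$ and $u_{0}(0)=0$, the classical strong maximum principle for the $p$-Laplacian (V\'azquez) forces $u_{0}\equiv0$, hence by $1$-homogeneity $u_{0}\equiv0$ on all of $\mathbb{R}^{n}$. Therefore $u'(x_{0};\cdot)\equiv0$, i.e.\ $u$ is differentiable at $x_{0}$ with $\nabla u(x_{0})=0$. Combining the two cases, $u$ is differentiable throughout $\Omega$, so $u\in C^{1}(\Omega)$.

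The step I expect to be the main obstacle is making the blow-up limit rigorous as a genuine weak solution — in particular, passing to the limit in the $1$-Laplacian term, which in the weak formulation carries a bounded vector field $z_{r}$ with $|z_{r}|\le1$ and $z_{r}\cdot\nabla u_{r}=|\nabla u_{r}|$ a.e.: one must extract a weak-$*$ limit $z_{0}$, verify $z_{0}\cdot\nabla u_{0}=|\nabla u_{0}|$ and $\operatorname{div}z_{0}\ge0$ (including on the facet $\{\nabla u_{0}=0\}$), using the convexity of the $u_{r}$ together with lower-semicontinuity / Anzellotti-pairing arguments. By contrast, the conceptual heart is elementary and uses the convexity hypothesis essentially: blowing up removes $f$ and leaves a convex function, whose $1$-Laplacian term has a favourable sign, so the whole matter collapses to the well-known strong maximum principle for $p$-superharmonic functions.
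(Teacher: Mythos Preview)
Your overall architecture coincides with the paper's: split into $x_{0}\notin F$ versus $x_{0}\in F$, handle the first case by local uniform ellipticity and De Giorgi--Nash--Moser, and for the second case blow up to a convex weak solution $u_{0}$ of $L_{b,p}u_{0}=0$ on $\mathbb{R}^{n}$ with $0\in\partial u_{0}(0)$, then prove $u_{0}\equiv 0$. The blow-up part is essentially the paper's Proposition~\ref{limit of rescaled solutions}; your observation that $u_{0}$ is the positively $1$-homogeneous directional derivative (so the full limit exists, not just subsequential) is a nice refinement.

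The substantive divergence is in the Liouville step, and here there is a genuine gap. Your ``decisive point'' is the chain
\[
u_{0}\text{ convex}\ \Longrightarrow\ \Delta_{1}u_{0}\ge 0\ \text{distributionally}\ \Longrightarrow\ -\Delta_{p}u_{0}=b\,\Delta_{1}u_{0}\ge 0,
\]
and then V\'azquez. The first arrow is only valid on $D=\{\nabla u_{0}\neq 0\}$: there $Z=\nabla u_{0}/|\nabla u_{0}|$ is determined by $u_{0}$, and the identity
\(\Delta_{1}u_{0}=|\nabla u_{0}|^{-1}\bigl(\Delta u_{0}-\langle D^{2}u_{0}\,\hat n,\hat n\rangle\bigr)\ge 0\)
follows from $D^{2}u_{0}\ge 0$ (after mollification). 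On the facet $F$, however, $Z$ is \emph{not} determined by $u_{0}$; it is part of the solution pair, constrained only by $|Z|\le 1$ and the equation. Convexity of $u_{0}$ says nothing about the sign of $\operatorname{div}Z$ on $F$ or across $\partial F$. Equivalently, writing $A=|\nabla u_{0}|^{p-2}\nabla u_{0}$, you need $\operatorname{div}A\le 0$ globally; but $A\equiv 0$ on $F$, so for $\phi\ge 0$,
\[
\int_{\mathbb{R}^{n}}A\cdot\nabla\phi=\int_{D}A\cdot\nabla\phi
=-\int_{D}(\operatorname{div}A)\,\phi+\int_{\partial F}\phi\,A\cdot\nu,
\]
and the boundary term carries no sign unless you already know $\nabla u_{0}\to 0$ on $\partial F$, which is precisely $C^{1}$-regularity. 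For a concrete warning: the convex, $1$-homogeneous candidate $u_{0}(x)=\max\{t_{1}x_{1},0\}$ has $\operatorname{div}A=t_{1}^{\,p-1}\delta_{\{x_{1}=0\}}>0$, so it is $p$-\emph{sub}harmonic; what rules it out is not a sign of $\operatorname{div}Z$ but the \emph{constraint} $|Z|\le 1$ (cf.\ the calculation (\ref{an estimate for Type 1}) in the paper). Your suggested fix via Anzellotti pairing/lower semicontinuity cannot manufacture the sign of $\operatorname{div}Z$ on $F$ out of convexity, because on $F$ the field $Z$ is decoupled from the geometry of $u_{0}$.

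This is exactly the obstacle the paper's route is designed to circumvent: instead of proving $p$-superharmonicity across $\partial F$, the paper first shows (Theorem~\ref{strong maximum principle}) that $u_{0}$ is affine on each connected component of $D$, then uses convex geometry to classify the remaining piecewise-linear candidates (Lemma~\ref{determination of the shape of the facet}), and eliminates each by a direct computation exploiting $|Z|\le 1$. So your proposal is not wrong in spirit, but the shortcut through V\'azquez is blocked at the facet boundary; to close it you would need essentially the same boundary analysis the paper carries out.
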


Difficulty on proving regularity on gradients of solutions to (\ref{crystal model eq again}) can be explained from a viewpoint of ellipticity ratio.  We set a convex function $E\colon {\mathbb R}^{n}\rightarrow [0,\,\infty)$ by
\[E(z)\coloneqq bE_{1}(z)+E_{p}(z)\quad\textrm{for } z\in{\mathbb R}^{n},\]
where \(E_{s}\,(1\le s<\infty)\) is defined by
\[E_{s}\coloneqq \frac{1}{s}\lvert z\rvert^{s}\quad\textrm{for } z\in{\mathbb R}^{n}.\]
We rewrite (\ref{crystal model eq again}) by 
\begin{equation}\label{equation in divergence form}
-\divx(\nabla_{z}E(\nabla u))=f\quad\textrm{in }\Omega.
\end{equation}
By differentiating (\ref{equation in divergence form}) by \(x_{i}\,(i\in\{\,1,\,\dots,\,n\,\})\), we get
\begin{equation}\label{equation differentiated; heuristic}
-\divx\mleft(\nabla_{z}^{2}E(\nabla u)\nabla \partial_{x_{i}}u\mright)=\partial_{x_{i}}f.
\end{equation}
By elementary calculations, ellipticity ratio of the Hessian $\nabla_{z}^{2}E$ at $z_{0}\in {\mathbb R}^{n}\setminus\{ 0\}$ is given by
\begin{align*}
\mleft(\textrm{ellipticity ratio of \(\nabla_{z}^{2}E(z_{0})\)}\mright) &\coloneqq \frac{(\textrm{the largest eigenvalue of \(\nabla_{z}^{2}E(z_{0})\)})}{(\textrm{the lowest eigenvalue of \(\nabla_{z}^{2}E(z_{0})\))}}\\
&=\frac{\max(p-1,\,1)+b\lvert z_{0}\rvert^{1-p}}{\min(p-1,\,1)}.
\end{align*}
Since the exponent $1-p$ is negative, the ellipticity ratio of $\nabla_{z}^{2}E(z_{0})$ blows up as $z_{0}\to 0$. By this property, we can observe that the equation (\ref{equation in divergence form}) becomes non-uniformly elliptic near the facet. It should be noted that our problem is substantially different from the $(p,\,q)$-growth problem, since for $(p,\,q)$-growth equations, non-uniform ellipticity appears as a norm of a gradient blows up \cite[Section 6.2]{MR2291779}. Although regularity of minimizers of double phase functionals, including
\[{\mathcal H}(u)\coloneqq \int E_{p}(\nabla u) \,dx+\int a(x) E_{q}(\nabla u)\,dx  \quad \textrm{with}\quad 1<p\le q<\infty,\,a(x)\ge 0\]
were discussed in scalar and even in vectorial cases by Colombo and Mingione \cite{MR3360738,MR3294408}, their results do not recover our $C^{1}$-regularity results.
This is basically derived from the fact that, unlike \(\nabla_{z}^{2}E_{p}\) with \(1<p<\infty\), the Hessian matrix \(\nabla_{z}^{2}E_{1}(z_{0})\,(z_{0}\not=0)\) always takes \(0\) as its eigenvalue. In other words, ellipticity of the operator \(\Delta_{1}u\) degenerates in the direction of \(\nabla u\), which seems to be difficult to handle analytically.

On the other hand, the ellipticity ratio of $\nabla_{z}^{2}E(z_{0})$ is uniformly bounded over $\lvert z_{0}\rvert>\delta$ for each fixed $\delta>0$.
In this sense we may regard the equation (\ref{equation differentiated; heuristic}) as locally uniformly elliptic outside the facet.
To show Lipschitz bound, we do not need to study over the facet. In fact, local Lipschitz continuity of solutions to (\ref{crystal model eq again}) are already established in \cite{MR4201656}; see also \cite{MR4148404} for a weaker result.
To study continuity of derivatives, we have to study regularity up to the facet. Thus, it seems to be impossible to apply standard arguments based on De Giorgi--Nash--Moser theory. In this paper, we would like to show continuity of derivatives of convex solutions by elementary arguments based on convex analysis.

Let us give a basic strategy to prove Theorem \ref{C1 regularity}. Since the problem is local, we may assume that \(\Omega\) is convex, or even a ball. By $C^{1}$-regularity criterion for a convex function, to show \(u\) is \(C^{1}\) at \(x\in\Omega\) it suffices to prove that
\begin{equation}\label{reduced question}
\textrm{the subdifferential \(\partial u(x)\) at \(x\in\Omega\) is a singleton;}
\end{equation}
see \cite[Appendix D]{MR3887613}, \cite[\S 25]{MR1451876} and Remark \ref{Some properties on convex function} for more detail. Here the subdifferential of \(u\) at \(x_{0}\in\Omega\) is defined by 
\[\partial u(x_{0})\coloneqq \mleft\{z\in{\mathbb R}^{n}\mathrel{}\middle|\mathrel{}u(x)\ge u(x_{0})+\langle z\mid x-x_{0}\rangle\textrm{ for all }x\in\Omega\mright\}.\]
Here \(\langle\,\cdot\,\mid\,\cdot\,\rangle\) stands for the standard inner product in \({\mathbb R}^{n}\).
For a convex function \(u\colon\Omega\rightarrow{\mathbb R}\), we can simply express the facet of \(u\) as
\[F\coloneqq\{x\in\Omega\mid\partial u(x)\ni 0\}=\{x\in\Omega\mid u(x)\le u(y)\textrm{ for all }y\in\Omega\}.\]
By definition it is clear that the facet \(F\) is non-empty if and only if a minimum of $u$ in \(\Omega\) exists. By convexity of \(u\), we can easily check that \(F\subset \Omega\) is a relatively closed convex set in \(\Omega\). We also define an open set
\[D\coloneqq \Omega\setminus F=\{x\in\Omega\mid u(y)<u(x)\textrm{ for some }y\in\Omega\}.\]
Our strategy to show (\ref{reduced question}) depends on whether \(x\) is inside \(F\) or not.

\begin{remark}\upshape[Some properties on differentiability of convex functions]\label{Some properties on convex function}
Let $v$ a real-valued convex function in a convex domain \(\Omega\subset{\mathbb R}^{n}\), then following properties hold.
\begin{enumerate}
\item $v$ is locally Lipschitz continuous in $\Omega$, and therefore $v$ is a.e. differentiable in $\Omega$ by Rademacher's theorem (\cite[Theorem 1.19]{MR3887613}, see also \cite[Theorem 3.1 and 3.2]{Evans-Gariepy} and \cite[Theorem 25.5]{MR1451876}).
\item For $x\in \Omega$, \(v\) is differentiable at \(x\) if and only if the subdifferential set \(\partial v(x)\) is a singleton. Moreover, if \(x\in\Omega\) satisfies either of these equivalent conditions, then we have \(\partial v(x)=\{\nabla v(x)\}\) (\cite[Proposition D.5]{MR3887613}, see also \cite[Theorem 25.1]{MR1451876}). In particular, Rademacher's theorem implies that $\partial v(x)=\{\nabla v(x)\}$ for a.e. $x\in\Omega$.
\item $v\in C^{1}(\Omega)$ if and only if $\partial v$ is single-valued (\cite[Remark D.3 (iii)]{MR3887613}, see also \cite[Theorem 25.5]{MR1451876}).
\end{enumerate}
Throughout this paper, we use these well-known results without proofs.
\end{remark}

We first discuss the case \(x\in D\).
Our goal is to show directly that \(u\) is \(C^{1,\,\alpha}\) near a neighborhood of \(x\) and therefore \(\partial u(x)=\{\nabla u(x)\}\not= \{0\}\) for all \(x\in D\). This strategy roughly consists of three steps.
Among them the first step, a kind of separation of $x\in D$ from the facet $F$, plays an important role.
Precisely speaking, we first find a neighborhood $B_{r}(x)\subset D$, an open ball centered at $x$ with its radius $r>0$, such that 
\begin{equation}\label{key method in D}
\partial_{\nu}u\ge \mu>0\quad\textrm{a.e. in}\quad B_{r}(x)
\end{equation}
for some direction $\nu$ and some constant $\mu>0$.
In order to justify (\ref{key method in D}), we fully make use of convexity of $u$ (Lemma \ref{coercivity lemma for convex function} in Section \ref{Appendix A}), not elliptic regularity theory.
Then with the aid of local Lipschitz continuity of $u$, the inclusion \(B_{r}(x)\subset \{0<\mu\le \partial_{\nu}u\le \lvert\nabla u\rvert\le M\}\) holds for some finite positive constant $M$.
Secondly, this inclusion allows us to check that \(u\) admits local \(W^{2,\,2}\)-regularity in $B_{r}(x)$ by the standard difference quotient method. Therefore we are able to obtain the equation (\ref{equation differentiated; heuristic}) in the distributional sense.
Finally, we appeal to the classical De Giorgi--Nash--Moser theory to obtain local \(C^{1,\,\alpha}\)-regularity at \(x\in D\), since the equation (\ref{equation differentiated; heuristic}) is uniformly elliptic in $B_{r}(x)$. Here the constant \(\alpha\in(0,\,1)\) we have obtained may depend on the location of \(x\in D\) through ellipticity, so \(\alpha\) may tend to zero as \(x\) tends to the facet.

It takes much efforts to prove that \(\partial u(x)=\{0\}\) for all \(x\in F\). Our strategy for justifying this roughly consists of three parts; a blow-argument for solutions, a strong maximum principle, and a Liouville-type theorem. Here we describe each individual step.

We first make a blow-argument. Precisely speaking, for a given convex solution \(u\colon\Omega\rightarrow{\mathbb R}\) and a point \(x_{0}\in \Omega\), we set a sequence of rescaled functions \(\{u_{a}\}_{a>0}\) defined by \[u_{a}(x)\coloneqq \frac{u(a(x-x_{0})+x_{0})-u(x_{0})}{a}.\]
We show that \(u_{a}\) locally uniformly converges to some convex function \(u_{0}\colon{\mathbb R}^{n}\rightarrow{\mathbb R}\), which satisfies $\partial u(x_{0})\subset \partial u_{0}(x_{0})$ by construction. Moreover, we prove that $u_{0}$ satisfies $L_{b,\,p}u_{0}=0$ in ${\mathbb R}^{n}$ in the distributional sense.
There we will face to justify a.e. convergence of gradients, and this is elementarily shown by regarding gradients in the classical sense as subgradients (Lemma \ref{a.e. pointwise convergence for convex functions} in the appendices).

Next we prove that if $x_{0}\in F$, then the convex weak solution \(u_{0}\) constructed as above satisfies \(\partial u_{0}(x_{0})=\{0\}\).
Moreover, we are going to prove that \(u_{0}\) is constant (a Liouville-type theorem).
For this purpose we establish the maximum principle.
\begin{theorem}[Strong maximum principle]\label{strong maximum principle}
Let \(u\) be a convex weak solution to \(L_{b,\,p}u=0\) in a convex domain \(\Omega\subset{\mathbb R}^{n}\) and $F\subset\Omega$ be the facet of $u$. Then \(u\) is affine in each connected component of the open set \(D\coloneqq \Omega\setminus F\). In particular, if $F=\emptyset$, then $u$ is affine in $\Omega$.
\end{theorem}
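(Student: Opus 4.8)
The key observation is that on the open set $D$, the equation $L_{b,p}u=0$ is non-degenerate and locally uniformly elliptic (since $|\nabla u|$ is bounded away from $0$ in any ball compactly contained in $D$, by convexity), so we expect $u$ to be essentially a classical solution there; the real work is to upgrade "classical solution of a uniformly elliptic PDE" to "affine" using convexity. The plan is as follows. First I would reduce to a single connected component $D_0$ of $D$. Fix any ball $B=B_r(x)\Subset D_0$. Since $x\notin F$, there is a direction $\nu$ and a minimizing sequence argument (Lemma \ref{coercivity lemma for convex function}) giving $\partial_\nu u\ge\mu>0$ a.e.\ on $B$; combined with local Lipschitz continuity this yields $0<\mu\le\partial_\nu u\le|\nabla u|\le M$ a.e.\ on $B$. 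As described for the case $x\in D$ in the introduction, this forces $u\in W^{2,2}_{\mathrm{loc}}(B)$ by difference quotients, the differentiated equation \eqref{equation differentiated; heuristic} (with right-hand side $0$) holds distributionally and is uniformly elliptic, and De Giorgi--Nash--Moser then gives $u\in C^{1,\alpha}_{\mathrm{loc}}(B)$, indeed $C^\infty$ by Schauder bootstrapping since the coefficients $\nabla_z^2E(\nabla u)$ are smooth functions of $\nabla u$ as long as $|\nabla u|\ge\mu$. Hence $u$ is a smooth, convex classical solution of $L_{b,p}u=0$ on all of $D_0$.

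Now I would exploit convexity directly. For a smooth convex solution on $D_0$, differentiate the equation and test against the right quantity to obtain a differential inequality for the Hessian. Concretely, write the equation in the form $-\operatorname{tr}\!\bigl(A(\nabla u)\nabla^2u\bigr)=0$ with $A(z)=\nabla_z^2E(z)$ positive definite on $\{|z|\ge\mu\}$. Each second derivative $\partial_{x_k}u$ solves the linear uniformly elliptic equation $-\operatorname{div}(A(\nabla u)\nabla\partial_{x_k}u)=0$. Applying the strong maximum principle (the classical Hopf version, now legitimate because the equation is genuinely uniformly elliptic on $D_0$) to the family of directional second derivatives: for a fixed unit vector $\xi$, the function $w_\xi:=\langle\nabla^2u\,\xi\mid\xi\rangle=\sum_{k,l}\xi_k\xi_l\,\partial_{x_k}\partial_{x_l}u$ is a nonnegative (by convexity) subsolution—or more precisely solves a linear elliptic equation obtained by differentiating twice in direction $\xi$, with an extra nonnegative term coming from the convexity/positivity of $A$—so $w_\xi\ge0$ throughout $D_0$, and if $w_\xi(x^\ast)=0$ at some interior point then $w_\xi\equiv0$ on the connected set $D_0$. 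The point $x^\ast$ is supplied for free: near the relative boundary $\partial F\cap\partial D_0$, $u$ is affine along $F$, so one can find interior points of $D_0$ at which a chosen second directional derivative vanishes, or, failing a boundary point, argue that the minimum of the strictly positive quantity $\det$ or $\operatorname{tr}\nabla^2u$ over $D_0$ is attained in a way that contradicts the strong maximum principle unless $\nabla^2u\equiv0$. Once $\nabla^2u\equiv0$ on $D_0$ (i.e.\ $w_\xi\equiv0$ for every $\xi$), $u$ is affine on $D_0$.

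For the cleanest exposition I would instead phrase the last step via the Hessian trace: let $h:=\Delta u\ge0$ on $D_0$; from the smoothness and the structure of $L_{b,p}$ one derives that $h$ satisfies a uniformly elliptic inequality $-\operatorname{div}(B\nabla h)\le 0$ locally with no zeroth-order term, hence $h$ is a subsolution, and its interior maximum principle together with the behaviour at $\partial D_0$ (where $u$ flattens onto $F$) forces $h$ either to be constant or to decay; combined with $h\ge0$ and the open-ball coercivity estimate of Lemma \ref{coercivity lemma for convex function}, which pins down the affine behaviour at the facet, one concludes $h\equiv0$, so $u$ is affine on $D_0$. The "in particular" statement is immediate: if $F=\emptyset$ then $D=\Omega$ is connected (a ball, WLOG), so $u$ is affine on all of $\Omega$.

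**Main obstacle.** The delicate point is not the interior regularity on $D$—that is standard once \eqref{key method in D} is in hand—but rather extracting a vanishing point for the Hessian (or for $h=\Delta u$) so that the strong maximum principle bites. The connected component $D_0$ need not have nice boundary, and $u$ is only continuous, not $C^1$, up to $\partial D_0$; so one cannot naively evaluate $\nabla^2u$ on the boundary. I expect the resolution is to use convexity globally: pick two points $x_1\in F$ (or on $\partial F$) and $x_2\in D_0$, look at the segment between them, and use that $u$ restricted to a supporting affine function touches along part of that segment, which forces a second directional derivative to vanish at some genuine interior point of $D_0$—this is where convexity, rather than PDE theory, does the essential work, exactly in the spirit flagged in the introduction. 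Making this "touching forces vanishing second derivative at an interior point" argument rigorous, handling the case $\partial F\cap\partial D_0=\emptyset$ (i.e.\ $D_0$ an unbounded component not meeting the facet) by a separate limiting argument, is the crux.
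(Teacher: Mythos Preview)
Your proposal has a genuine gap, and it is precisely the one you flag as the ``main obstacle'': you never produce an interior point of $D_0$ at which the Hessian vanishes, and without that your strong maximum principle for second derivatives cannot fire. The suggestions you make do not close this. Looking at a segment from $x_1\in F$ to $x_2\in D_0$ gives you nothing about $\nabla^2 u$ at interior points of $D_0$; the fact that $u$ is constant on $F$ tells you about $u$ on the boundary of $D_0$, where you have no second-order information. And in the case $F=\emptyset$ (which the theorem explicitly covers) there is no facet to exploit at all. There is a second, more technical gap: the claim that $w_\xi=\langle\nabla^2u\,\xi\mid\xi\rangle$ is a subsolution of a uniformly elliptic equation is not justified. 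Differentiating $-\divx(A(\nabla u)\nabla\partial_{x_k}u)=0$ once more introduces terms involving $\nabla_z^3E(\nabla u)$ contracted with products of second derivatives of $u$, and the sign of those terms is not controlled by convexity of $u$ alone. (Note also the direction: to conclude $w_\xi\equiv 0$ from $w_\xi\ge 0$ and $w_\xi(x^\ast)=0$ you would need $w_\xi$ to be a \emph{supersolution}, not a subsolution.)

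The paper sidesteps both issues by never touching second derivatives. It fixes $x_0\in D_0$, sets $c=\nabla u(x_0)$ (well defined since $u\in C^1(D)$ by Lemma~\ref{regularity at regular point}), and works with the touching set $\Sigma=\{x\in D_0\mid u(x)=\langle c\mid x\rangle\}$. Convexity gives $u(x)\ge\langle c\mid x\rangle$ everywhere, so $x_0\in\Sigma$ for free---there is no hunt for a special point. If $\Sigma\subsetneq D_0$, a standard Hopf ball argument produces a touching point $y_\ast\in\Sigma$ on the boundary of a ball $B_R(x_\ast)\subset D_0\setminus\Sigma$. The paper then builds an explicit classical barrier subsolution $v=h+\langle c\mid x\rangle$ on the annulus $E_R(x_\ast)$ (Lemma~\ref{Hopf Lemma}), compares via Proposition~\ref{comparison principle}, and reads off $\partial_{\nu_0}(u-\langle c\mid\cdot\rangle-h)(y_\ast)\le 0$. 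But $y_\ast$ is a global minimum of $u-\langle c\mid\cdot\rangle$ on $D_0$, so $\nabla u(y_\ast)=c$ and hence $0\ge -\partial_{\nu_0}h(y_\ast)>0$, a contradiction. The whole argument uses only $C^1$-regularity of $u$ in $D_0$; no $C^2$ or higher, and no analysis of the differentiated-twice equation.
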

It should be noted that this result is a kind of strong maximum principle in the sense that
\begin{equation}\label{strong comparison principle}
u\ge a\textrm{ in }D_{0}\textrm{ and }u(x_{0})=a(x_{0})\textrm{ for }x_{0}\in D_{0}\quad\textrm{imply that}\quad u\equiv a\textrm{ in }D_{0},
\end{equation}
where \(a(x)\coloneqq u(x_{0})+\langle\nabla u(x_{0})\mid x-x_{0}\rangle\) and \(D_{0}\) is a connected component of \(D\). The affine function \(a\) clearly satisfies \(L_{b,\,p}a=0\) in the classical sense. 

In order to justify (\ref{strong comparison principle}), we will face three problems.
The first is a justification of the comparison principle, the second is regularity of \(u\), and the third is a construction of suitable barrier subsolutions, all of which are essentially needed in the classical proof of E. Hopf's strong maximum principle \cite{MR50126}.
In order to overcome these obstacles, we appeal to both classical and distributional approaches, and restrict our analysis only over regular points. For details, see Section \ref{Subsect Literature Overview on maximum principles}.

Even though our strong maximum principle is somewhat weakened in the sense that this holds only on each connected component of \(D\subset\Omega\), we are able to show the following Liouville-type theorem.
\begin{theorem}[Liouville-type theorem]\label{Liouville theorem}
Let \(u\) be a convex weak solution to \(L_{b,\,p}u=0\) in \({\mathbb R}^{n}\). Then $F\subset {\mathbb R}^{n}$, the facet of $u$, satisfies either $F=\emptyset$ or $F={\mathbb R}^{n}$. In particular, $u$ satisfies either of the followings.
\begin{enumerate}
\item If $u$ attains its minimum in ${\mathbb R}^{n}$, then $u$ is constant.
\item If $u$ does not attains its minimum in ${\mathbb R}^{n}$, then $u$ is a non-constant affine function in ${\mathbb R}^{n}$.
\end{enumerate}
\end{theorem}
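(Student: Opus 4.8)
The plan is to prove the dichotomy ``$F=\emptyset$ or $F={\mathbb R}^{n}$''; the two itemized conclusions are then immediate consequences. Indeed, if $F={\mathbb R}^{n}$ then, by the definition of the facet, every point of ${\mathbb R}^{n}$ is a minimizer of $u$, so $u$ is constant and in particular attains its minimum; and if $F=\emptyset$ then Theorem \ref{strong maximum principle} gives that $u$ is affine in ${\mathbb R}^{n}$, while $u$ cannot be constant (a constant attains its minimum everywhere), so $u$ is a non-constant affine function, which does not attain a minimum on ${\mathbb R}^{n}$. Hence the whole content of the theorem is to exclude the case $\emptyset\neq F\subsetneq{\mathbb R}^{n}$, which I henceforth assume towards a contradiction. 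Write $m\coloneqq\min_{{\mathbb R}^{n}}u$, which exists because $F\neq\emptyset$ and equals $u$ on $F$, pick a connected component $D_{0}$ of the non-empty open set $D={\mathbb R}^{n}\setminus F$, and use Theorem \ref{strong maximum principle} to write $u=a$ in $D_{0}$ for a suitable affine function $a$.

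The first real step is to identify $D_{0}$ geometrically. Fix $x_{0}\in D_{0}$; since $u=a$ near $x_{0}$, the function $u$ is differentiable there with $\partial u(x_{0})=\{\nabla a\}$ by Remark \ref{Some properties on convex function}, and I set $\xi\coloneqq\nabla a$. Because $x_{0}\notin F$ we have $0\notin\partial u(x_{0})$, hence $\xi\neq 0$; and since $\xi\in\partial u(x_{0})$, the affine function $a$ is a global minorant of $u$, i.e.\ $u\ge a$ on ${\mathbb R}^{n}$, so that $D_{0}\subset\{u=a\}$. After an affine change of coordinates I may assume $\xi=\lambda e_{1}$ with $\lambda=|\xi|>0$ and $a(x)=m+\lambda x_{1}$. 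Then every $x\in D_{0}$ satisfies $u(x)=m+\lambda x_{1}\ge m$, hence $x_{1}\ge 0$, and $x_{1}=0$ would give $u(x)=m$, i.e.\ $x\in F$, which is impossible; so $D_{0}\subset\{x_{1}>0\}$. On the other hand, every $\bar x\in F$ satisfies $u(\bar x)=m$ but $u(\bar x)\ge a(\bar x)=m+\lambda\bar x_{1}$, forcing $\bar x_{1}\le 0$; hence $F\subset\{x_{1}\le 0\}$, so the connected set $\{x_{1}>0\}$ lies in $D$ and, meeting the component $D_{0}$, must coincide with it. Thus $D_{0}=\{x_{1}>0\}$ is an open half-space, $\partial D_{0}=\{x_{1}=0\}=:H$, and since $\partial D_{0}\subset\partial D=\partial F\subset F$ we conclude $H\subset F\subset\{x_{1}\le 0\}$ with $u\equiv m$ on $F$.

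It remains to contradict the equation. Let $Z$ be the $L^{\infty}$ vector field supplied by the weak formulation of $L_{b,p}u=0$, so $|Z|\le 1$ a.e., $\langle Z\mid\nabla u\rangle=|\nabla u|$ a.e., and $W\coloneqq bZ+|\nabla u|^{p-2}\nabla u$ is divergence-free in the distributional sense; note $W\in L^{\infty}_{\mathrm{loc}}$ since $u$ is locally Lipschitz. On $D_{0}=\{x_{1}>0\}$ one has $\nabla u=\lambda e_{1}$, which forces $Z=e_{1}$ and hence $W=(b+\lambda^{p-1})e_{1}$ there. Now two possibilities occur just below $H$. If $F=H$, then applying Theorem \ref{strong maximum principle} to the remaining component $\{x_{1}<0\}$ and repeating the previous step gives $u=m+\mu x_{1}$ there for some $\mu<0$, so that $W=-(b+|\mu|^{p-1})e_{1}$ on $\{x_{1}<0\}$; since $W$ has no component orthogonal to $e_{1}$, a direct computation shows $\divx W$ equals $(2b+\lambda^{p-1}+|\mu|^{p-1})$ times the surface measure on $H$, which is nonzero --- a contradiction. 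If instead $H\subsetneq F$, then $F$, being convex, containing $H$, and contained in $\{x_{1}\le 0\}$, must contain a slab $\{c\le x_{1}\le 0\}$ for some $c<0$; pick a thin cylinder $Q=(-h,h)\times B'_{r}$ crossing $H$ with $0<h<-c$, so that its lower half lies in the interior of $F$, where $\nabla u=0$ and hence $W=bZ$ with $|W|\le b$. The Gauss--Green identity for the divergence-free field $W$ on $Q$ makes the total flux of $W$ through $\partial Q$ vanish, yet the flux through the top face $\{x_{1}=h\}\times B'_{r}$ equals $(b+\lambda^{p-1})|B'_{r}|$, while the fluxes through the bottom face and the lateral boundary are bounded in absolute value by $b|B'_{r}|$ and $C_{n}bhr^{n-2}$ respectively (on the upper part of the lateral boundary $W$ is parallel to $e_{1}$, so contributes nothing); hence $\lambda^{p-1}|B'_{r}|\le C_{n}bhr^{n-2}$, and letting $h\to 0$ gives $\lambda^{p-1}|B'_{r}|\le 0$, again a contradiction. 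Either way $\emptyset\neq F\subsetneq{\mathbb R}^{n}$ is impossible, which proves the theorem.

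I expect the last step to be the main obstacle. Theorem \ref{strong maximum principle} only yields affineness on each connected component of $D$, so the real work is to patch this local information into global rigidity, and the decisive --- and somewhat delicate --- point is that along the interface between the half-space $D_{0}$ and the facet the one-Laplacian part of $W$ would be forced to have normal component strictly exceeding $b$, in conflict with $|Z|\le 1$. Turning the flux/normal-trace comparison into a rigorous argument calls for a modest amount of divergence-measure field theory, although in the piecewise-affine situation that actually arises it reduces to the elementary computation of $\divx W$ indicated above.
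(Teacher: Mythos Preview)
Your argument is correct and tracks the paper's route closely: your inline identification of $D_{0}$ as a half-space, together with the dichotomy $F=H$ versus $H\subsetneq F$, is precisely the content of the paper's Lemma~\ref{determination of the shape of the facet}, and your Gauss--Green flux contradiction is the geometric form of the paper's explicit testing against products $\phi_{1}(x_{1})\phi_{2}(x')$ on a thin box $(-d,d)\times Q'$ with $d\to 0$. One caveat to your final paragraph: in the case $H\subsetneq F$ the field $W=bZ$ on the facet side is only $L^{\infty}$ with $|Z|\le 1$ (it is \emph{not} piecewise-constant, since $Z$ is undetermined where $\nabla u=0$), so this case does not literally ``reduce to the elementary computation of $\divx W$'' you indicate --- you genuinely need the test-function approximation of the cylinder, which is exactly what the paper writes out.
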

In the proof of the Liouville-type theorem, our strong maximum principle plays an important role. Precisely speaking, if a convex solution in the total space does not satisfy $\emptyset\subsetneq F\subsetneq {\mathbb R}^{n}$, then Theorem \ref{strong maximum principle} and the supporting hyperplane theorem from convex analysis help us to determine the shape of convex solutions. In particular, the convex solution can be classified into three types of piecewise-linear functions of one-variable.
These non-smooth piecewise-linear functions are, however, no longer weak solutions, which we will prove by some explicit calculations.

By applying the Liouville-type theorem and our blow-argument, we are able to show that subgradients at points of the facet are always \(0\), i.e., \(\partial u(x)=\{0\}\) for all \(x\in F\), and we complete the proof of the \(C^{1}\)-regularity theorem. Note that the statements in Theorem \ref{strong maximum principle} and \ref{Liouville theorem} should not hold for unperturbed one-Laplace equation $-\Delta_{1}u=f$, since any absolutely continuous non-decreasing function of one variable $u=u(x_{1})$ satisfies $-\Delta_{1}u=0$.

Finally we mention that we are able to refine our strategy, and obtain $C^{1}$-regularity of convex solutions to more general equations. We replace the one-Laplacian $\Delta_{1}$ by another operator which is derived from a general convex functional of degree $1$. This generalization requires us to modify some of our arguments, including a blow-up argument and the Liouville-type theorem. For further details, see Section \ref{Subsect Outline} and Section \ref{Subsect Sketch}.

\subsection{Literature overview on maximum principles}\label{Subsect Literature Overview on maximum principles}
We briefly introduce maximum principles related to the paper. We also describe our strategy to establish the strong maximum principle.

Maximum principles, including comparison principles and strong maximum principles, have been discussed by many mathematicians in various settings. In the classical settings, E. Hopf proved a variety of maximum principles on elliptic partial differential equations of second order, by elementary arguments based on constructions of auxiliary functions. E. Hopf's strong maximum principle is one of the well-known results on maximum principles. In Hopf's proof of the strong maximum principle \cite{MR50126}, he defined an auxiliary function
\begin{equation}\label{Hopf comparison ft}
h(x)\coloneqq e^{-\alpha\lvert x-x_{\ast}\rvert^{2}}-e^{-\alpha R^{2}}\quad\textrm{for }x\in{\mathbb R}^{n},
\end{equation}
which becomes a classical subsolution in a fixed open annulus \(E_{R}=E_{R}(x_{\ast})\coloneqq B_{R}(x_{\ast})\setminus\overline{B_{R/2}(x_{\ast})}\) for sufficiently large \(\alpha>0\).
An alternative function
\begin{equation}\label{alternative choice of h}
h(x)\coloneqq \lvert x-x_{\ast}\rvert^{-\alpha}-R^{-\alpha}\quad\textrm{for }x\in{\mathbb R}^{n}\setminus\{x_{\ast}\}
\end{equation}
is given in \cite[Chapter 2.8]{MR2356201}.
E. Hopf's classical results on maximum principles are extensively contained in \cite[Chapter 3]{MR1814364}, \cite[Chapter 2]{MR0219861} and \cite[Chapter 2]{MR2356201}.

The materials \cite[Chapter 8--9]{MR1814364} and \cite[Chapter 3--6]{MR2356201} provide proofs of maximum principles, including strong maximum principles, even for distributional solutions. Among them, \cite[Theorem 5.4.1]{MR2356201} deals with a justification of the strong maximum principle for distributional supersolutons to certain quasilinear elliptic equations with divergence structures,
\[\textrm{i.e., }-\divx(A(x,\,\nabla u(x)))=0,\]
which covers the \(p\)-Laplace equation with \(1<p<\infty\).
Even in the distributional schemes, the proof of the maximum principle \cite[Theorem 5.4.1]{MR2356201} is partially similar to E. Hopf's classical one, in the sense that it is completed by calculating directional derivatives of auxiliary functions. The significant difference is, however, the construction of spherically symmetric subsolutions of \(C^{1}\) class, which is given in \cite[Chapter 4]{MR2356201}, is based on Leray--Schauder's fixed point theorem \cite[Theorem 11.6]{MR1814364}.
Also it should be noted that the proofs of comparison principles \cite[Theorem 2.4.1 and 3.4.1]{MR2356201} are just based on strict monotonicity of the mapping \(A(x,\,\cdot\,)\colon{\mathbb R}^{n}\rightarrow {\mathbb R}^{n}\), whereas Hopf's proof appeals to direct constructions of auxiliary functions.

With our literature overview in mind, we describe our strategy for showing (\ref{strong comparison principle}).
A justification of comparison principles is easily obtained in the distributional schemes (see \cite[Chapter 3]{MR2356201} as a related material).
However, the remaining two obstacles, the differentiability of \(u\) and the construction of subsolutions, cannot be resolved affirmatively by just imitating arguments given in \cite[Chapter 4--5]{MR2356201}.
In the first place, it should be mentioned that convex weak solutions we treat in this paper are assumed to have only local Lipschitz regularity, whereas supersolutions treated in \cite[Chapter 5]{MR2356201} are required to be in \(C^{1}\).
We recall that \(C^{1}\)-regularity of convex weak solutions can be guaranteed in \(D\subset\Omega\) (the outside of the facet) by the classical De Giorgi--Nash--Moser theory, and this result enables us to overcome the problem whether \(u\) is differentiable at certain points.
This is the reason why Theorem \ref{strong maximum principle} need to restrict on $D$.
Although the construction of distributional subsolutions is generally discussed in \cite[Chapter 4]{MR2356201}, we do not appeal to this.
Instead, we directly construct a function \(v=\beta h+a\) in \({\mathbb R}^{n}\setminus\{x_{\ast}\}\), where \(\beta>0\) is a constant and \(h\) is defined as in (\ref{Hopf comparison ft}) or (\ref{alternative choice of h}).
We will determine the constants \(\alpha,\,\beta>0\) so precisely that \(v\) satisfies \(L_{b,\,p}v\le 0\) in the classical sense over a fixed open annulus \(E_{R}=E_{R}(x_{\ast})\).
We also make \(\lvert \nabla v\rvert\) very close to \(\lvert \nabla a\rvert\equiv\lvert\nabla u(x_{0})\rvert>0\) over \(E_{R}\), so that \(\nabla v\) no longer degenerates there.
By direct calculation of \(L_{b,\,p}v\), we explicitly construct classical subsolutions to \(L_{b,\,p}u=0\) in \(E_{R}\).
Finally we are able to deduce (\ref{strong comparison principle}) by an indirect proof.

Another type of definitions of subsolutions and supersolutions to (\ref{crystal model eq again}) in the distributional schemes can be found in F. Kr\"{u}gel's thesis in 2013 \cite{krugel2013variational}. The significant difference is that Kr\"{u}gel did not regard the term \(\nabla u/\lvert \nabla u\rvert\) as a subgradient vector field. Since monotonicity of \(\partial\lvert\,\cdot\,\rvert\) is not used at all, it seems that Kr\"{u}gel's proof of comparison principle \cite[Theorem 4.8]{krugel2013variational} needs further explanation. For details, see Remark \ref{Kruegel mistakes}.
\subsection{Mathematical models and previous researches}
Our problem is derived from a minimizing problem of a certain energy functional, which involves the total variation energy.
The equation (\ref{crystal model eq again}) is deduced from the following Euler--Lagrange equation;
\[f=\frac{\delta G}{\delta u},\quad \textrm{where}\quad G(u)\coloneqq b\int_{\Omega}\lvert \nabla u\rvert\,dx+\frac{1}{p}\int_{\Omega}\lvert\nabla u\rvert^{p}\,dx.\]
The energy functional \(G\) often appears in fields of materials science and fluid mechanics.

In \cite{spohn1993surface}, Spohn modeled the relaxation dynamics of a crystal surface below the roughening temperature. On $h$ describing the height of the crystal for a two-dimensional domain $\Omega$ is modeled as
\[h_{t}+\divx j=0\]
with $j=-\nabla \mu$, where \(\mu\) is a chemical potential.
In \cite{spohn1993surface}, its evolution is given as
\[\mu=\frac{\delta \Phi}{\delta h}\quad\textrm{with}\quad \Phi(h)=\int_{\Omega}\lvert\nabla h\rvert\,dx+\kappa \int_{\Omega}\lvert\nabla h\rvert^{3}\,dx\]
with \(\kappa>0\). This \(\Phi\) is essentially the same as \(G\) with \(p=3\).
Then, the resulting evolution equation for \(h\) is of the form
\[bh_{t}=\Delta L_{b,\,3}h\quad\textrm{with}\quad b=\frac{1}{3\kappa}.\]
This equation can be defined as a limit of step motion, which is microscopic in the direction of height \cite{MR3289366}; see also \cite{odisharia2006simulation}.
The initial value problem of this equation can be solved based on the theory of maximal monotone operators \cite{MR2746654} under the periodic boundary condition. 
Subdifferentials describing the evolution are characterized by Kashima \cite{MR2083617}, \cite{MR2885966}. Its evolution speed is calculated by \cite{MR2083617} for one dimensional setting and by \cite{MR2885966} for radial setting. It is known that the solution stops in finite time \cite{MR2772127}, \cite{MR3362778}.
In \cite{odisharia2006simulation}, numerical calculation based on step motion is calculated. If one considers a stationary solution, $h$ must satisfies
\[\Delta L_{b,\,3}h=0.\]
If \(L_{b,\,3}h\) is a constant, our Theorem \ref{C1 regularity} implies that the height function $h$ is $C^{1}$ provided that $h$ is convex.

For a second order problem, \[\textrm{i.e., }bh_{t}=L_{b,\,p}h,\] its analytic formulation goes back to \cite{MR0394323}, \cite[Chapter VI]{MR0521262}  for $p=2$, and its numerical analysis is given in \cite{MR635927}.
For the fourth order problem, its numerical study is more recent. The reader is referred to papers by \cite{MR3902521}, \cite{MR4045243}, \cite{MR2733098}. 

Another important mathematical model for the equation (\ref{crystal model eq again}) is found in fluid mechanics. Especially for \(p=2\) and \(n=2\), the energy functional \(G\) appears when modeling stationary laminar incompressible flows of a material called Bingham fluid, which is a typical non Newtonian fluid. Bingham fluid reflects the effect of plasticity  corresponding to $\Delta_{1}u$ as well as that of viscosity corresponding $\Delta_{2} u =\Delta u$ in (\ref{crystal model eq again}). 
Let us consider a parallel stationary flow with velocity $U=(0,0, u(x_1, x_2))$ in a cylinder $\Omega\times {\mathbb R}$. Of course, this is incompressible flow, i.e., $\divx U=0$. If this flow is the classical Newtonian fluid, then the Navier--Stokes equations become (\ref{crystal model eq again}) in $\Omega$ with $b=0$ and $f=-\partial_{x_3} \pi$, where $\pi$ denotes the pressure. In the case that plasticity effects appears, one obtains (\ref{crystal model eq again}), following \cite[Chapter VI, Section 1]{MR0521262}. There it is also mentioned that since the velocity is assumed to be uni-directional, the external force term in (\ref{crystal model eq again}) is considered as constant in this laminar flow model.
The significant difference is that motion of the Bingham fluid is blocked if the stress of the Bingham fluid exceeds a certain threshold.
This physical phenomenon is essentially explained by the nonlinear term \(b\Delta_{1}u\), which reflects rigidity of the Bingham fluid.
For more details, see \cite[Chapter VI]{MR0521262} and the references therein.

On continuity of derivatives for solutions, less is known even for the second order elliptic case. Although Kr\"{u}gel gave an observation that solutions can be continuously differentiable \cite[Theorem 1.2]{krugel2013variational} on the boundary of a facet, mathematical justifications of \(C^{1}\)-regularity have not been well-understood. Our main result (Theorem \ref{C1 regularity}) mathematically establishes continuity of gradient for convex solutions.

\subsection{Organization of the paper}\label{Subsect Outline}
We outline the contents of the paper.

Section \ref{Sect Regularity outside facets} establishes \(C^{1,\,\alpha}\)-regularity at regular points of convex weak solutions (Lemma \ref{regularity at regular point}).
In order to apply De Giorgi--Nash--Moser theory, we will need to justify local \(W^{2,\,2}\)-regularity by the difference quotient method.
The key lemma, which is proved by convex analysis, is contained in the appendices (Lemma \ref{coercivity lemma for convex function}).

Section \ref{Sect Blow-argument} provides a blow-up argument for convex weak solutions.
The aim of Section \ref{Sect Blow-argument} is to prove that \(u_{0}\colon{\mathbb R}^{n}\rightarrow{\mathbb R}\), a limit of rescaled solutions, satisfies \(L_{b,\,p}u_{0}=0\) in the weak sense over the whole space ${\mathbb R}^{n}$ (Proposition \ref{limit of rescaled solutions}).
To assure this, we will make use of an elementary result on a.e. convergence of gradients, which is given in the appendices (Lemma \ref{a.e. pointwise convergence for convex functions}).

Section \ref{Sect Maximum Principles} is devoted to justifications of maximum principles for the equation $L_{b,\,p}u=0$. We first give definitions of sub- and supersolution in the weak sense. Section \ref{Subsect Comparison Principle} provides a justification of the comparison principle (Proposition \ref{comparison principle}). Section \ref{Subsect Construction of subsolutions} establishes an existence result of classical barrier subsolutions in an open annulus (Lemma \ref{Hopf Lemma}). Applying these results in Section \ref{Subsect Comparison Principle}--\ref{Subsect Construction of subsolutions}, we prove the strong maximum principle outside the facet (Theorem \ref{strong maximum principle}).

In Section \ref{Sect Liouville Theorem}, we will show the Liouville-type theorem (Theorem \ref{Liouville theorem}) by making use of Theorem \ref{strong maximum principle}, and complete the proof of our main theorem (Theorem \ref{C1 regularity}).

Finally in Section \ref{Sect Generalization}, we discuss a few generalization of the operators $\Delta_{1}$ and $\Delta_{p}$. Since the general strategy for the proof is the same, we only indicate modification of our arguments. Among them, we especially treat with a Liouville-type theorem and a blow-up argument, since these proofs require basic facts of a general convex functional which is positively homogeneous of degree $1$. These well-known facts are contained in the appendices for completeness.

\section{Regularity outside the facet}\label{Sect Regularity outside facets}
In Section \ref{Sect Regularity outside facets}, we would like to show that \(u\) is \(C^{1}\) at any \(x\in D\), and therefore (\ref{reduced question}) holds for all \(x\in D\). This result will be used in the proof of the strong maximum principle (Theorem \ref{strong maximum principle}).

We first give a precise definition of weak solutions to \(L_{b,\,p}u=f\) in a convex domain \(\Omega\subset{\mathbb R}^{n}\), which is not necessarily bounded.
\begin{definition}\upshape\label{def of weak solution in total space}\upshape
Let \(\Omega\subset{\mathbb R}^{n}\) be a domain, which is not necessarily bounded, and \(f\in L_{\mathrm{loc}}^{q}(\Omega)\,(n<q\le\infty)\).
We say that a function \(u\in W_{\mathrm{loc}}^{1,\,p}(\Omega)\) is a \textit{weak} solution to (\ref{crystal model eq again}), when for any bounded Lipschitz domain \(\omega\Subset\Omega\), there exists a vector field \(Z\in L^{\infty}(\omega,\,{\mathbb R}^{n})\) such that the pair \((u,\,Z)\in W^{1,\,p}(\omega)\times L^{\infty}(\omega,\,{\mathbb R}^{n})\) satisfies \begin{equation}\label{weak formulation}
b\int_{\omega}\langle Z\mid\nabla\phi\rangle\,dx+\int_{\omega}\mleft\langle\lvert\nabla u\rvert^{p-2}\nabla u \mathrel{}\middle|\mathrel{} \nabla\phi\mright\rangle\,dx=\int_{\omega}f\phi\,dx 
\end{equation}
for all \(\phi\in W_{0}^{1,\,p}(\omega)\), and
\begin{equation}\label{subgradient Z}
Z(x)\in\partial\lvert\,\cdot\,\rvert(\nabla u(x))
\end{equation}
for a.e. \(x\in\omega\). For such pair \((u,\,Z)\), we say that $(u,\,Z)$ satisfies $L_{b,\,p}u=f$ in $W^{-1,\,p^{\prime}}(\omega)$ or simply say that $u$ satisfies $L_{b,\,p}u=f$ in $W^{-1,\,p^{\prime}}(\omega)$. Here \(p^{\prime}\in(1,\,\infty)\) denotes the H\"{o}lder conjugate exponent of \(p\in(1,\,\infty)\).
\end{definition}

The aim of Section \ref{Sect Regularity outside facets} is to show Lemma \ref{regularity at regular point} below.
\begin{lemma}\label{regularity at regular point}
Let \(u\) be a convex weak solution to (\ref{crystal model eq again}) in a convex domain \(\Omega\subset{\mathbb R}^{n}\), and \(f\in L^{q}_{\mathrm{loc}}(\Omega)\,(n<q\le\infty)\). If \(x_{0}\in D\), then we can take a small radius \(r_{0}>0\), a unit vector \(\nu_{0}\in{\mathbb R}^{n}\), and a small number \(\mu_{0}>0\) such that
\begin{equation}\label{essinf>0}
\overline{B_{r_{0}}(x_{0})}\subset D\textrm{ and }\langle\nabla u(x)\mid \nu_{0}\rangle\ge \mu_{0}\quad\textrm{for a.e. }x\in B_{r_{0}}(x_{0}),
\end{equation}
and there exists a small number \(\alpha=\alpha(\mu_{0})\in (0,\,1)\) such that \(u\in C^{1,\,\alpha}(B_{r_{0}/2}(x_{0}))\). In particular, \(u\) is \(C^{1}\) in \(D\), and \(\partial u(x)=\{\nabla u(x)\}\not= \{0\}\) for all \(x\in D\).
\end{lemma}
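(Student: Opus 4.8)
The plan is to prove the three assertions of the lemma --- the geometric bound (\ref{essinf>0}), the $C^{1,\alpha}$-conclusion on $B_{r_{0}/2}(x_{0})$, and the consequences for $D$ --- by a three-step argument in which only the first step uses convexity in an essential way, the other two being classical regularity theory. In order: (i) separate $x_{0}$ from the facet, producing $r_{0},\nu_{0},\mu_{0}$ that satisfy (\ref{essinf>0}); (ii) deduce $u\in W^{2,2}_{\mathrm{loc}}(B_{r_{0}}(x_{0}))$ by the difference-quotient method, now available because the equation is uniformly elliptic on that ball; (iii) apply De Giorgi--Nash--Moser theory to the differentiated equation (\ref{equation differentiated; heuristic}), obtaining $u\in C^{1,\alpha}(B_{r_{0}/2}(x_{0}))$. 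Granting (i)--(iii), we get $u\in C^{1}(D)$ because every point of $D$ has such a neighbourhood, and for $x\in D$ the convexity of $u$ forces $\partial u(x)=\{\nabla u(x)\}$ (Remark \ref{Some properties on convex function}); moreover $\nabla u(x)\neq 0$, since $0\in\partial u(x)$ would mean $x\in F$, or directly since $\langle\nabla u(x)\mid\nu_{0}\rangle\geq\mu_{0}>0$ by continuity of $\nabla u$.

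For step (i), note that $x_{0}\in D$ means there is $y\in\Omega$ with $u(y)<u(x_{0})$. I would set $\nu_{0}:=(x_{0}-y)/\lvert x_{0}-y\rvert$ and apply the coercivity Lemma \ref{coercivity lemma for convex function} --- which is proved later by purely convex-analytic means --- to obtain a radius $r_{0}>0$ with $\overline{B_{r_{0}}(x_{0})}\subset\Omega$ and a constant $\mu_{0}>0$ such that $\langle\nabla u(x)\mid\nu_{0}\rangle\geq\mu_{0}$ for a.e.\ $x\in B_{r_{0}}(x_{0})$; this is precisely (\ref{key method in D}). Integrating this lower bound along segments parallel to $\nu_{0}$ (legitimate because the Lipschitz function $u$ is absolutely continuous on lines) shows that $u$ is strictly increasing in the direction $\nu_{0}$ through every point near $x_{0}$, so no such point is a minimiser of $u$ in $\Omega$; after shrinking $r_{0}$ if necessary we therefore also have $\overline{B_{r_{0}}(x_{0})}\subset D$, which gives (\ref{essinf>0}). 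No elliptic regularity enters here.

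For step (ii), local Lipschitz continuity of the convex function $u$ gives $\lvert\nabla u\rvert\leq M$ a.e.\ on $B\coloneqq B_{r_{0}}(x_{0})$ for some finite $M>0$, so a.e.\ there $\nabla u$ lies in the compact set $K\coloneqq\{z\in{\mathbb R}^{n}:\lvert z\rvert\leq M,\ \langle z\mid\nu_{0}\rangle\geq\mu_{0}\}$, which is contained in ${\mathbb R}^{n}\setminus\{0\}$ since $\lvert z\rvert\geq\langle z\mid\nu_{0}\rangle\geq\mu_{0}$ on $K$. On ${\mathbb R}^{n}\setminus\{0\}$ the Hessian $\nabla_{z}^{2}E=b\,\nabla_{z}^{2}E_{1}+\nabla_{z}^{2}E_{p}$ is positive definite --- the positive-definite summand $\nabla_{z}^{2}E_{p}$ compensates for the single degenerate direction of the positive-semidefinite summand $b\,\nabla_{z}^{2}E_{1}$, as the ellipticity-ratio computation in the introduction makes precise --- so on the compact $K$ it is uniformly elliptic with constants $0<\lambda\leq\Lambda<\infty$ depending on $\mu_{0}$ (and on $n,p,b,M$). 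Since $\nabla u$ is valued in $K$, on $B$ the weak formulation (\ref{weak formulation}) reads $-\divx(\nabla_{z}E(\nabla u))=f$ with $f\in L^{2}_{\mathrm{loc}}(B)$ (immediate for $n\geq 2$; the one-dimensional case being an elementary ordinary differential equation); testing with difference quotients of $u$ cut off away from $\partial B$ and using the uniform monotonicity of $\nabla_{z}E$ on $K$ yields, by the standard argument, $\nabla u\in W^{1,2}_{\mathrm{loc}}(B)$. For step (iii), $\nabla_{z}E$ is of class $C^{1}$ on a neighbourhood of $K$, so differentiating $-\divx(\nabla_{z}E(\nabla u))=f$ in $x_{k}$ shows that each $w_{k}\coloneqq\partial_{x_{k}}u\in W^{1,2}_{\mathrm{loc}}(B)$ solves, in the distributional sense, $-\divx(\nabla_{z}^{2}E(\nabla u)\,\nabla w_{k})=-\divx(fe_{k})$ in $B$, i.e.\ (\ref{equation differentiated; heuristic}) with the right-hand side written as the divergence of the $L^{q}$-field $fe_{k}$, $q>n$. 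The coefficient matrix $x\mapsto\nabla_{z}^{2}E(\nabla u(x))$ being bounded, measurable and uniformly elliptic with constants $\lambda,\Lambda$, De Giorgi--Nash--Moser theory gives $w_{k}\in C^{0,\alpha}_{\mathrm{loc}}(B)$ with $\alpha=\alpha(n,q,\lambda,\Lambda)\in(0,1)$, hence $\alpha=\alpha(\mu_{0})$ once the fixed data are suppressed; therefore $\nabla u\in C^{0,\alpha}(B_{r_{0}/2}(x_{0}))$, i.e.\ $u\in C^{1,\alpha}(B_{r_{0}/2}(x_{0}))$, and the lemma follows as in the first paragraph.

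I expect the genuine obstacle to be step (i): manufacturing, from the single ``valley point'' $y$, a uniform positive lower bound on the directional derivative $\partial_{\nu_{0}}u$ throughout an entire ball, using convexity alone. This is exactly the place where elliptic regularity is unavailable, since the equation degenerates near $F$, and it is what makes steps (ii)--(iii) routine. The only delicate point in those two steps is to verify that $K$ stays a positive distance from the origin --- which the bound $\langle z\mid\nu_{0}\rangle\geq\mu_{0}$ guarantees --- so that the ellipticity constants $\lambda,\Lambda$, and hence the exponent $\alpha$, are genuinely finite and positive and depend on $x_{0}$ only through $\mu_{0}$.
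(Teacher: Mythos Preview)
Your proposal is correct and follows essentially the same three-step approach as the paper: separation from the facet via Lemma~\ref{coercivity lemma for convex function}, local $W^{2,2}$-regularity by difference quotients using the uniform ellipticity of $\nabla_z^2 E$ on the convex compact set $\{\mu_0\le\langle z\mid\nu_0\rangle,\ |z|\le M\}\subset\mathbb{R}^n\setminus\{0\}$, and then De~Giorgi--Nash--Moser applied to the differentiated equation. The only cosmetic difference is in step~(i), where the paper obtains $\overline{B_{r_0}(x_0)}\subset D$ directly from the continuity estimate $u(y_0)-u(y_1)\ge\delta_0$ rather than by integrating the directional-derivative bound, but the two arguments are equivalent.
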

Before proving Lemma \ref{regularity at regular point}, we introduce difference quotients.
For given \(g\colon \Omega\rightarrow {\mathbb R}^{m}\,(m\in{\mathbb N}),\,j\in\{\,1,\,\dots,\,n\,\},\,h\in{\mathbb R}\setminus\{0\}\), we define 
\[\Delta_{j,\,h}g(x)\coloneqq \frac{g(x+he_{j})-g(x)}{h}\in {\mathbb R}^{m}\quad\textrm{for \(x\in \Omega\) with }x+he_{j}\in \Omega,\]
where \(e_{j}\in{\mathbb R}^{n}\) denotes the unit vector in the direction of the \(x_{j}\)-axis.

In the proof of Lemma \ref{regularity at regular point}, we will use Lemma \ref{Poincare-type lemma for differential quotient}--\ref{coercivity lemma for convex function} without proofs. For precise proofs, see Section \ref{Appendix A}.
\begin{proof}
For each fixed \(x_{0}\in D\), we may take and fix \(x_{1}\in \Omega\) such that \(u(x_{0})>u(x_{1})\).
We set \(3\delta_{0}\coloneqq u(x_{0})-u(x_{1})>0,\,d_{0}\coloneqq \lvert x_{0}-x_{1}\rvert>0\) and \(\nu_{0}\coloneqq d_{0}^{-1}(x_{0}-x_{1})\). By \(u\in C(\Omega)\), we may take a sufficiently small \(r_{0}>0\) such that 
\begin{equation}\label{separation by balls}
u(y_{0})-u(y_{1})\ge \delta_{0}>0\quad \textrm{for all}\quad y_{0}\in B_{r_{0}}(x_{0}),\,y_{1}\in B_{r_{0}}(x_{1}).
\end{equation}
From (\ref{separation by balls}), the inclusion \(\overline{B_{r_{0}}(x_{0})}\subset D\) clearly holds.
(\ref{separation by balls}) also allows us to check that for all \(y_{0}\in B_{r_{0}}(x_{0}),\,z_{0}\in\partial u(y_{0})\),
\begin{equation}\label{essinf;subgradient ver}
\langle z_{0}\mid \nu_{0}\rangle \ge \frac{u(y_{0})-u(y_{0}-d_{0}\nu_{0})}{d_{0}}\ge \frac{\delta_{0}}{d_{0}}\eqqcolon \mu_{0}>0.
\end{equation}
For the first inequality in (\ref{essinf;subgradient ver}), we have used Lemma \ref{coercivity lemma for convex function}, which is basically derived from convexity of \(u\).
Recall that \(\partial u(x)=\{\nabla u(x)\}\) for a.e. \(x\in\Omega\), and hence we are able to recover (\ref{essinf>0}) from (\ref{essinf;subgradient ver}). 

In order to obtain \(C^{1}\)-regularity in $D$, we will appeal to the classical De Giorgi--Nash--Moser theory.
For preliminaries, we check that the operator \(L_{b,\,p}u\) assures uniform ellipticity in \(B_{r_{0}}(x_{0})\).
Local Lipschitz continuity of $u$ implies that there exists a sufficiently large number \(M_{0}\in(0,\,\infty)\) such that
\begin{equation}\label{lipschitz bound M}
\esssup\limits_{B_{r_{0}}(x_{0})}\,\lvert\nabla u\rvert\le M_{0}\textrm{ and }\lvert u(x)-u(y)\rvert\le M_{0}\lvert x-y\rvert\quad\textrm{for all }x,\,y\in B_{r_{0}}(x_{0}).
\end{equation}
For notational simplicity, we write subdomains by \[U_{1}\coloneqq B_{r_{0}}(x_{0})\Supset U_{2}\coloneqq B_{15r_{0}/16}(x_{0})\Supset U_{3}\coloneqq B_{7r_{0}/8}(x_{0})\Supset U_{4}\coloneqq B_{3r_{0}/4}(x_{0})\Supset U_{5}\coloneqq B_{r_{0}/2}(x_{0}).\]
It should be noted that \(E(z)\coloneqq b\lvert z\rvert+\lvert z\rvert^{p}/p\,(z\in{\mathbb R}^{n})\) satisfies \(E\in C^{\infty}({\mathbb R}^{n}\setminus\{0\})\), and there exists two constants \(0<\lambda(p,\,\mu_{0},\,M_{0})\le \Lambda(b,\,p,\,\mu_{0},\,M_{0})<\infty\) such that
\begin{equation}\label{uniformly elliptic on a ball in D}
\lambda \lvert \zeta\rvert^{2}\le \mleft\langle \nabla_{z}^{2}E(z_{0})\zeta\mathrel{}\middle|\mathrel{}\zeta\mright\rangle
\end{equation}
\begin{equation}\label{uniformly bound on a ball in D}
\mleft\langle\nabla_{z}^{2}E(z_{0})\zeta\mathrel{}\middle|\mathrel{} \omega\mright\rangle\le \Lambda\lvert\zeta\rvert\lvert\omega\rvert
\end{equation}
for all \(z_{0},\,\zeta,\,\omega\in{\mathbb R}^{n}\) with \(\mu_{0}\le \lvert z_{0}\rvert\le M_{0}\).
We can explicitly determine \(0<\lambda\le \Lambda<\infty\) by
\[\mleft\{\begin{array}{ccc}
\lambda(p,\,\mu_{0},\,M_{0})&\coloneqq& \min_{\mu_{0}\le t\le M_{0}}\mleft(\min\{\,1,\,p-1\,\}t^{p-2}\mright),\\ \Lambda(b,\,p,\,\mu_{0},\,M_{0})&\coloneqq&\max_{\mu_{0}\le t\le M_{0}}\mleft(bt^{-1}+\max\{\,1,\,p-1\,\}t^{p-2}\mright)
\end{array} \mright.\]

Now we check that \(u\in W^{2,\,2}(U_{4})\) by the difference quotient method. We refer the reader to \cite[Theorem 8.1]{MR1962933} as a related result. 
By \cite[Lemma 8.2]{MR1962933}, it suffices to check that
\begin{equation}\label{uniform estimate of differential quotient}
\sup\,\mleft\{\int_{U_{4}}\lvert \nabla(\Delta_{j,\,h}u)\rvert^{2}\,dx\mathrel{}\middle|\mathrel{}h\in{\mathbb R},\,0<\lvert h\rvert<\frac{r_{0}}{16}\mright\}<\infty\quad\textrm{for each }j\in\{\,1,\,\dots,\,n\,\}.
\end{equation}
Since \(u\in W^{1,\,p}(U_{1})\) satisfies \(L_{b,\,p}u=f\) in \(W^{-1,\,p^{\prime}}(U_{1})\), we obtain
\begin{equation}\label{weak form over a closed ball in D}
\int_{U_{1}}\mleft\langle\nabla_{z}E(\nabla u) \mathrel{}\middle|\mathrel{}\nabla\phi\mright\rangle\,dx=\int_{U_{1}}f\phi\,dx
\end{equation}
for all \(\phi\in W_{0}^{1,\,p}(U_{1})\).
Here we note that \(\nabla u\) no longer degenerates in \(U_{1}\) by (\ref{essinf>0}).
We fix a cutoff function \(\eta\in C_{c}^{1}(U_{3})\) such that
\begin{equation}\label{cutoff}
0\le\eta\le 1\textrm{ in }U_{3},\,\eta\equiv 1\textrm{ in }U_{4},\, \lvert\nabla\eta\rvert\le \frac{c}{r_{0}}
\end{equation}
for some constant \(c>0\). 
For each fixed \(j\in\{\,1,\,\dots\,,\,n\,\},\,h\in{\mathbb R}\) with \(0<\lvert h\rvert<r_{0}/16\), we test \(\phi\coloneqq \Delta_{j,\,-h}(\eta^{2}\Delta_{j,\,h}u)\) into (\ref{weak form over a closed ball in D}).
We note that \(\phi\in W^{1,\,\infty}(U_{1})\subset W^{1,\,p}(U_{1})\) by (\ref{uniformly bound on a ball in D}), and this is compactly supported in \(U_{2}\). Hence \(\phi\in W_{0}^{1,\,p}(U_{2})\) is an admissible test function.
By testing \(\phi\), we have
\begin{align}\label{tested equation for differential quotient}
0&=\int_{U_{2}}\mleft\langle \Delta_{j,\,h}(\nabla_{z}E(\nabla u(x)))\mathrel{}\middle|\mathrel{}\eta^{2}\nabla(\Delta_{j,\,h}u)+2\eta\Delta_{j,\,h}u\nabla\eta\mright\rangle -\int_{U_{2}}f\Delta_{-j,\,h}(\eta^{2}\Delta_{j,\,h}u)\,dx\nonumber\\&=\int_{U_{2}}\eta^{2}\mleft\langle A_{h}(x,\,\nabla u(x))\nabla(\Delta_{j,\,h}u)\mathrel{}\middle|\mathrel{}\nabla(\Delta_{j,\,h}u)\mright\rangle\,dx\nonumber\\&\quad+2\int_{U_{2}}\eta\Delta_{j,\,h}u\mleft\langle A_{h}(x,\,\nabla u(x))\nabla(\Delta_{j,\,h}u)\mathrel{}\middle|\mathrel{}\nabla\eta\mright\rangle\,dx\nonumber\\&\quad\quad -\int_{U_{2}}f\Delta_{-j,\,h}(\eta^{2}\Delta_{j,\,h}u)\,dx\nonumber\\&\eqqcolon I_{1}+I_{2}-I_{3}.
\end{align}
Here \(A_{h}=A_{h}(x,\,\nabla u(x))\) denotes a matrix-valued function in $U_{2}$ given by 
\[A_{h}(x,\,\nabla u(x))\coloneqq\int_{0}^{1}\nabla_{z}^{2}E((1-t)\nabla u(x)+t\nabla u(x+he_{j}))\,dt.\]
We note that with the aid of (\ref{essinf>0})--(\ref{lipschitz bound M}), we obtain
\[\mu_{0}\le \lvert(1-t)\nabla u(x)+t\nabla u(x+he_{j})\lvert\le M_{0}\]
for a.e. \(x\in U_{2}\) and for all \(0\le t\le 1\).
Combining this result with (\ref{uniformly elliptic on a ball in D})--(\ref{uniformly bound on a ball in D}), we conclude that \(A_{h}\) satisfies
\begin{equation}\label{uniformly elliptic for Ah}
\lambda \lvert \zeta\rvert^{2}\le \mleft\langle A_{h}(x,\,\nabla u(x))\zeta\mathrel{}\middle|\mathrel{}\zeta\mright\rangle
\end{equation}
\begin{equation}\label{uniformly bound for Ah}
\mleft\langle A_{h}(x,\,\nabla u(x))\zeta\mathrel{}\middle|\mathrel{} \omega\mright\rangle\le \Lambda\lvert\zeta\rvert\lvert\omega\rvert
\end{equation}
for all \(\zeta,\,\omega\in{\mathbb R}^{n}\) and for a.e. \(x\in U_{2}\).
We set an integral
\[J\coloneqq \int_{U_{2}}\eta^{2}\lvert\nabla(\Delta_{j,\,h}u)\rvert^{2}\,dx.\]
By (\ref{uniformly elliptic for Ah}), it is clear that \(I_{1}\ge\lambda J\).
By Young's inequality and applying a Poincar\'{e}-type inequality (Lemma \ref{Poincare-type lemma for differential quotient}) to \(\eta^{2}\Delta_{j,\,h}u\in W_{0}^{1,\,2}(U_{2})\), we obtain for any \(\varepsilon>0\),
\begin{align*}
\lvert I_{3}\rvert&\le \frac{1}{4\varepsilon}\lVert f\rVert_{L^{2}(U_{2})}^{2}+\varepsilon\int_{U_{2}}\lvert \nabla (\eta^{2}\Delta_{j,\,h}u)\rvert^{2}\,dx\\&\le \frac{1}{4\varepsilon}\lVert f\rVert_{L^{2}(U_{2})}^{2}+4\varepsilon\int_{U_{2}}\lvert \Delta_{j,\,h}u\rvert^{2}\lvert\nabla\eta\rvert^{2}\,dx+2\varepsilon\int_{U_{2}}\eta^{2}\lvert\nabla(\Delta_{j,\,h}u)\rvert^{2}\,dx.
\end{align*}
Here we have invoked the property \(0\le \eta\le 1\) in \(U_{2}\).
We fix \(\varepsilon\coloneqq \lambda/6>0\).
By (\ref{uniformly bound for Ah}) and Young's inequality, we have
\begin{align*}
\lvert I_{2}\rvert&\le 2\Lambda\int_{U_{2}}\eta \lvert \nabla(\Delta_{j,\,h}u)\rvert\cdot\lvert \Delta_{j,\,h}u\rvert\lvert\nabla\eta\rvert\,dx\\&\le \frac{\lambda}{3}J+\frac{3\Lambda^{2}}{\lambda}\int_{U_{2}}\lvert \Delta_{j,\,h}u\rvert^{2}\lvert\nabla\eta\rvert^{2}\,dx.
\end{align*}
It follows from (\ref{lipschitz bound M}) that \(\lVert \Delta_{j,\,h}u\rVert_{L^{\infty}(U_{2})}\le M_{0}\). Therefore we obtain from (\ref{tested equation for differential quotient}),
\[\int_{U_{4}}\lvert \nabla(\Delta_{j,\,h}u)\rvert^{2}\,dx\le J=\int_{U_{2}}\eta^{2}\lvert\nabla(\Delta_{j,\,h}u)\rvert^{2}\,dx\le C(\lambda,\,\Lambda)\mleft(M_{0}^{2}\lVert\nabla\eta\rVert_{L^{2}(U_{2})}^{2}+\lVert f\rVert_{L^{2}(U_{2})}^{2}\mright).\]
The estimate (\ref{uniform estimate of differential quotient}) follows from this,  and therefore \(u\in W^{2,\,2}(U_{4})\).

For each \(\psi\in C_{c}^{\infty}(U_{4})\), we test \(\partial_{x_{j}}\psi\in C_{c}^{\infty}(U_{4})\) into (\ref{weak form over a closed ball in D}). Integrating by parts, we obtain
\begin{equation}\label{weak form over a closed ball in D differentiated}
\int_{U_{4}}\mleft\langle\nabla_{z}^{2}E(\nabla u)\nabla \partial_{x_{j}}u \mathrel{}\middle|\mathrel{}\nabla\psi\mright\rangle\,dx=-\int_{U_{4}}f\partial_{x_{j}}\psi\,dx
\end{equation}
for all \(\psi\in C_{c}^{\infty}(U_{4})\). Noting that \(f\in L^{q}(U_{4})\subset L^{2}(U_{4}),\,\partial_{x_{j}}u\in W^{1,\,2}(U_{4})\), and (\ref{uniformly elliptic on a ball in D})--(\ref{uniformly bound on a ball in D}), we may extend \(\psi\in W_{0}^{1,\,2}(U_{4})\) by a density argument.
The conditions (\ref{uniformly elliptic on a ball in D})--(\ref{uniformly bound on a ball in D}) imply that \(\nabla_{z}^{2}E(\nabla u)\) is uniformly elliptic over \(U_{1}\).
Hence by \cite[Theorem 8.22]{MR1814364}, there exists \(\alpha=\alpha(\lambda,\,\Lambda,\,n,\,q)\in (0,\,1)\) such that \(\partial_{x_{j}}u\in C^{\alpha}(U_{5})\) for each \(j\in\{\,1,\,\dots,\,n\,\}\). This regularity result implies \(\partial u(x)=\{\nabla u(x)\}\not= \{0\}\) for all \(x\in D\).
\end{proof}

\section{A blow-up argument}\label{Sect Blow-argument}
In order to show that (\ref{reduced question}) holds true even for \(x\in F\), we first make a blow-argument and construct a convex weak solution in the whole space \({\mathbb R}^{n}\), in the sense of Definition \ref{def of weak solution in total space}.
\begin{proposition}\label{limit of rescaled solutions}
Let \(\Omega\subset{\mathbb R}^{n}\) be a convex domain, and \(f\in L^{q}_{\mathrm{loc}}(\Omega)\,(n<q\le\infty)\). Assume that \(u\) is a convex weak solution to (\ref{crystal model eq again}), and \(x_{0}\in\Omega\). Then there exists a convex function \(u_{0}\colon {\mathbb R}^{n}\rightarrow {\mathbb R}\) such that
\begin{enumerate}
\item \(u_{0}\) is a weak solution to \(L_{b,\,p}u_{0}=0\) in \({\mathbb R}^{n}\).
\item The inclusion \(\partial u(x_{0})\subset\partial u_{0}(x_{0})\) holds. That is, if \(c\in\partial u(x_{0})\), then we have
\[u_{0}(x)\ge u_{0}(x_{0})+\langle c\mid x-x_{0}\rangle\quad\textrm{for all }x\in{\mathbb R}^{n}.\]
In particular, if \(x_{0}\in F\), then the facet of \(u_{0}\) is non-empty.
\end{enumerate}
\end{proposition}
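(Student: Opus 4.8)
The plan is to obtain $u_0$ as a locally uniform limit of the rescaled functions $u_a(x) = (u(a(x-x_0)+x_0) - u(x_0))/a$ as $a\to\infty$, and then to pass to the limit in the weak formulation. First I would record the elementary facts about the family $\{u_a\}$: each $u_a$ is convex, $u_a(x_0)=0$, and by local Lipschitz continuity of $u$ (Remark \ref{Some properties on convex function} (i)) together with the fact that dilations do not increase the Lipschitz constant on a fixed ball once $a$ is large, the family $\{u_a\}_{a\ge 1}$ is locally uniformly bounded and locally equi-Lipschitz on ${\mathbb R}^n$. Hence by Arzel\`a--Ascoli and a diagonal argument there is a subsequence $a=a_k\to\infty$ along which $u_{a_k}\to u_0$ locally uniformly, with $u_0\colon{\mathbb R}^n\to{\mathbb R}$ convex and Lipschitz on compacts. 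A short computation shows that a subgradient $c\in\partial u(x_0)$ remains a subgradient of every $u_a$ at $x_0$ (the inequality defining the subdifferential is invariant under the affine rescaling, and enlarging the domain only restricts the inequality to fewer points), so $c\in\partial u_{a}(x_0)$ for all $a$; passing to the limit gives $c\in\partial u_0(x_0)$, which is statement (2). In particular $0\in\partial u(x_0)$ when $x_0\in F$ forces $0\in\partial u_0(x_0)$, so the facet of $u_0$ is nonempty.

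The substantive point is statement (1): that $u_0$ solves $L_{b,p}u_0=0$ weakly on all of ${\mathbb R}^n$. Fix a bounded Lipschitz subdomain $\omega\Subset{\mathbb R}^n$; for $a$ large, $a(\omega-x_0)+x_0\Subset\Omega$, and rescaling the weak formulation (\ref{weak formulation}) for $u$ on that set yields that $u_a$ satisfies, on $\omega$,
\[
b\int_\omega\langle Z_a\mid\nabla\phi\rangle\,dx + \int_\omega a^{\,?}\,\bigl\langle |\nabla u_a|^{p-2}\nabla u_a\mathrel{}\middle|\mathrel{}\nabla\phi\bigr\rangle\,dx = \int_\omega f_a\phi\,dx
\]
for all $\phi\in W_0^{1,p}(\omega)$, where $Z_a(x)=Z(a(x-x_0)+x_0)\in\partial|\cdot|(\nabla u_a(x))$ is still bounded by $1$ in $L^\infty$, $f_a(x)=a\,f(a(x-x_0)+x_0)$, and the $p$-Laplacian term carries a scaling factor that is a negative power of $a$ (because $\nabla u_a$ has the same homogeneity as $\nabla u$ while the divergence contributes an extra $a^{-1}$). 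Since $\nabla u$ is bounded on compacts, the $p$-Laplacian term is $O(a^{-(p-1)})$ uniformly in $\phi$ with $\|\nabla\phi\|_{L^{p}}\le 1$ and hence vanishes in the limit; similarly $\|f_a\|_{L^q(\omega)} = a^{1-n/q}\|f\|_{L^q(\cdot)}\to 0$ because $q>n$, so the right-hand side vanishes. It remains to pass to the limit in the first term: $Z_a\rightharpoonup^* Z_0$ in $L^\infty(\omega,{\mathbb R}^n)$ along a further subsequence, and $\nabla u_a\to\nabla u_0$ a.e. in $\omega$ by Lemma \ref{a.e. pointwise convergence for convex functions} (this is exactly the elementary a.e.-convergence-of-gradients statement the introduction flags, proved by viewing classical gradients of convex functions as subgradients and using closedness of the graph of $\partial|\cdot|$). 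The a.e. convergence $\nabla u_a\to\nabla u_0$ combined with $Z_a(x)\in\partial|\cdot|(\nabla u_a(x))$, $|Z_a|\le 1$, and upper semicontinuity (closed graph) of the maximal monotone map $\partial|\cdot|$ forces the weak-$*$ limit $Z_0$ to satisfy $Z_0(x)\in\partial|\cdot|(\nabla u_0(x))$ a.e.; this is the one place that needs a little care, since weak-$*$ convergence alone does not give pointwise membership in a set — one argues via Mazur's lemma / convexity of $\partial|\cdot|(w)$ together with $\nabla u_a\to\nabla u_0$ a.e., or equivalently tests against $\operatorname{sgn}$-type functions. Passing to the limit in $b\int_\omega\langle Z_a\mid\nabla\phi\rangle$ then gives $b\int_\omega\langle Z_0\mid\nabla\phi\rangle=0$ for all $\phi\in W_0^{1,p}(\omega)$, which is precisely the weak formulation of $L_{b,p}u_0=0$ on $\omega$ (the $p$-Laplacian term of $u_0$ being paired against the now-vanishing contribution — more carefully, the limiting identity $b\int_\omega\langle Z_0\mid\nabla\phi\rangle + \int_\omega\langle|\nabla u_0|^{p-2}\nabla u_0\mid\nabla\phi\rangle = 0$ holds because the $u_0$ $p$-Laplacian term is the limit of a sequence that is itself $o(1)$, so it is in fact the zero functional — one should phrase this as: the pair $(u_0,Z_0)$ satisfies (\ref{weak formulation}) with $f=0$, noting the $p$-term simply is not present in the limit). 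Since $\omega$ was an arbitrary bounded Lipschitz subdomain and the subsequences can be arranged by a diagonal argument over an exhaustion of ${\mathbb R}^n$, $u_0$ is a weak solution of $L_{b,p}u_0=0$ in ${\mathbb R}^n$ in the sense of Definition \ref{def of weak solution in total space}.

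The main obstacle is the identification of the limiting vector field $Z_0$ as an admissible subgradient field for $u_0$, i.e.\ verifying (\ref{subgradient Z}) in the limit, because weak-$*$ convergence of $Z_a$ does not by itself localize; this is resolved precisely by the a.e.\ convergence of $\nabla u_a$ (Lemma \ref{a.e. pointwise convergence for convex functions}) together with the closed-graph property of the monotone operator $\partial|\cdot|$. A secondary, purely bookkeeping, obstacle is tracking the scaling exponents so that one sees cleanly that both the $p$-Laplacian term and the forcing term die as $a\to\infty$ — the former using $p>1$ and local boundedness of $\nabla u$, the latter using $q>n$; and one must be slightly careful to choose the subsequence along which $u_{a_k}$, $\nabla u_{a_k}$, and $Z_{a_k}$ all converge in their respective senses simultaneously on an exhausting sequence of subdomains, which a standard diagonalization handles.
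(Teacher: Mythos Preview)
Your proposal contains two genuine errors that together derail the argument.

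\textbf{Wrong direction of the blow-up.} You send $a\to\infty$, but the correct limit is $a\to 0$. With the rescaling $u_a(x)=(u(a(x-x_0)+x_0)-u(x_0))/a$, the function $u_a$ is defined only on the preimage $a^{-1}(\Omega-x_0)+x_0$; as $a\to 0$ this set expands to ${\mathbb R}^n$, while as $a\to\infty$ it shrinks. Your claim that ``for $a$ large, $a(\omega-x_0)+x_0\Subset\Omega$'' is backwards: for large $a$ that set is large and typically leaves $\Omega$. The same sign error breaks your $f_a$ estimate: $\|f_a\|_{L^q(\omega)}=a^{1-n/q}\|f\|_{L^q(a(\omega-x_0)+x_0)}$ tends to $0$ as $a\to 0$ (since $q>n$ gives $1-n/q>0$), not as $a\to\infty$.

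\textbf{The $p$-Laplacian term carries no extra scaling and does not vanish.} Since $\nabla u_a(x)=\nabla u(a(x-x_0)+x_0)$, the field $|\nabla u_a|^{p-2}\nabla u_a$ has exactly the same $a$-homogeneity as $Z_a$; the change of variables produces the \emph{same} power of $a$ on all three terms, which then cancels. The rescaled equation is simply
\[
b\int_{\omega}\langle Z_a\mid\nabla\phi\rangle\,dx+\int_{\omega}\bigl\langle|\nabla u_a|^{p-2}\nabla u_a\mathrel{}\big|\mathrel{}\nabla\phi\bigr\rangle\,dx=\int_{\omega}f_a\phi\,dx,
\]
with no $a^{?}$ in front of the second integral. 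Consequently that term does not tend to zero; rather, using the uniform bound $|\nabla u_a|\le M$ together with the a.e.\ convergence $\nabla u_a\to\nabla u_0$ (Lemma \ref{a.e. pointwise convergence for convex functions}) and dominated convergence, it converges to $\int_\omega\langle|\nabla u_0|^{p-2}\nabla u_0\mid\nabla\phi\rangle\,dx$. This is exactly the $p$-Laplacian contribution in the limiting equation $L_{b,p}u_0=0$; it is not ``the zero functional.''

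Once these two points are corrected, the rest of your outline is close to the paper's: Arzel\`a--Ascoli plus diagonalization for $u_0$, weak-$\ast$ compactness for $Z_a$, and the identification $Z_0\in\partial|\cdot|(\nabla u_0)$. For the last step the paper is more direct than your Mazur/closed-graph sketch: from $\|Z_{a}\|_{L^\infty}\le 1$ one gets $\|Z_0\|_{L^\infty}\le 1$ by weak-$\ast$ lower semicontinuity, and on the set $\{\nabla u_0\neq 0\}$ the a.e.\ convergence $\nabla u_a\to\nabla u_0$ forces $Z_a=\nabla u_a/|\nabla u_a|\to\nabla u_0/|\nabla u_0|$ pointwise, which pins down $Z_0$ there.
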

\begin{proof}
Without loss of generality, we may assume that \(x_{0}=0\) and \(u(x_{0})=0\).
First we fix a closed ball \(\overline{B_{R}(0)}=\overline{B_{R}}\subset \Omega\). We note that \(u\in \Lip (\overline{B_{R}})\) since \(u\) is convex. Hence there exists a sufficiently large number \(M\in(0,\,\infty)\) such that
\[\esssup\limits_{B_{R}}\,\lvert\nabla u\rvert\le M\textrm{ and }\lvert u(x)-u(y)\rvert\le M\lvert x-y\rvert\quad\textrm{for all }x,\,y\in B_{R}.\]
We take and fix a vector field \(Z\in L^{\infty}(B_{R},\,{\mathbb R}^{n})\) such that the pair \((u,\,Z)\in W^{1,\,p}(B_{R})\times L^{\infty}(B_{R},\,{\mathbb R}^{n})\) satisfies \(L_{b,\,p}u=f\) in \(W^{-1,\,p^{\prime}}(B_{R})\).
For each \(a>0\), we define a rescaled convex function \(u_{a}\colon B_{R/a}\rightarrow {\mathbb R}\) and a dilated vector field \(Z_{a}\in L^{\infty}(B_{R/a},\,{\mathbb R}^{n})\) by
\[u_{a}(x)\coloneqq \frac{u(ax)}{a},\quad Z_{a}(x)\coloneqq Z(ax)\quad\textrm{for }\,x\in B_{R/a}.\]
We also set \(f_{a}\in L^{q}(B_{R/a})\) by \[f_{a}(x)\coloneqq af(ax)\quad \textrm{for }\,x\in B_{R/a}.\]
Then it is easy to check that the pair \((u_{a},\,Z_{a})\in W^{1,\,\infty}(B_{R/a})\times L^{\infty}(B_{R/a},\,{\mathbb R}^{n})\) satisfies \(L_{b,\,p}u_{a}=f_{a}\) in \(W^{-1,\,p^{\prime}}(B_{R/a})\).
For each fixed $R<r<\infty$, the inclusion \(B_{r}=B_{r}(0)\subset B_{R/a}\) holds for all $a\in(0,\,R/r)$.
We also have
\begin{equation}\label{bounds of rescaled functions ua}
\sup\limits_{B_{r}}\,\lvert u_{a}\rvert\le \frac{r}{R}M<\infty,\quad \lVert \nabla u_{a}\rVert_{L^{\infty}(B_{r})}\le M<\infty\quad \textrm{for all }a\in(0,\,R/r)\end{equation}
by definition of $u_{a}$.
Hence by the Arzel\`{a}--Ascoli theorem and a diagonal argument, we can take a decreasing sequence \(\{a_{N}\}_{N=1}^{\infty}\subset(0,\,\infty)\), such that \(a_{N}\to 0\) as \(N\to\infty\), and
\begin{equation}\label{compact convergence}
u_{a_{N}}\rightarrow u_{0}\quad\textrm{locally uniformly in }{\mathbb R}^{n}.
\end{equation}
for some function \(u_{0}\colon{\mathbb R}^{n}\to{\mathbb R}\).
Clearly \(u_{0}\) is convex in \({\mathbb R}^{n}\), and the inclusion \(\partial u(x_{0})\subset\partial u_{0}(x_{0})\) holds true by the construction of rescaled functions \(u_{a}\).
If \(x_{0}\in F\), then we have \(\{0\}\subset\partial u(x_{0})\subset\partial u_{0}(x_{0})\) and therefore \(x_{0}\) lies in the facet of \(u_{0}\).
We are left to show that \(u_{0}\) is a weak solution to \(L_{b,\,p}u_{0}=0\) in \({\mathbb R}^{n}\).
Before proving this, we note that from (\ref{bounds of rescaled functions ua})--(\ref{compact convergence}) and Lemma \ref{a.e. pointwise convergence for convex functions}, it follows that
\begin{equation}\label{a.e. convergence for rescaled vector}
\nabla u_{a_{N}}(x)\rightarrow \nabla u_{0}(x)\quad\textrm{and}\quad\lvert \nabla u_{0}(x)\rvert\le M\quad\textrm{for a.e. }x\in{\mathbb R}^{n}
\end{equation}
as \(N\to\infty\).
We arbitrarily fix an open ball \(B_{r}=B_{r}(0)\subset{\mathbb R}^{n}\).
We easily realize that a family of pairs \(\{(u_{a},\,Z_{a})\}_{0<a<R/r}\subset W^{1,\,\infty}(B_{r})\times L^{\infty}(B_{r},\,{\mathbb R}^{n})\) satisfies 
\begin{equation}\label{weak condition}
Z_{a}(x)\in\partial\lvert\,\cdot\,\rvert(\nabla u_{a}(x))\quad \textrm{for a.e. }x\in B_{r},
\end{equation}
\begin{equation}\label{weak formulation in local domain}
b\int_{B_{r}}\langle Z_{a}\mid \nabla \phi\rangle\,dx+\int_{B_{r}}\mleft\langle \lvert\nabla u_{a}\rvert^{p-2}\nabla u_{a}\mathrel{}\middle|\mathrel{}\nabla\phi \mright\rangle\,dx=\int_{B_{r}}f_{a}\phi\,dx\quad\textrm{for all }\phi\in W_{0}^{1,\,p}(B_{r}).
\end{equation}
By definition of \(f_{a}\), we get \(\lVert f_{a}\rVert_{L^{q}(B_{r})}=a^{1-n/q}\lVert f\rVert_{L^{q}(B_{ar})}\le a^{1-n/q}\lVert f\rVert_{L^{q}(B_{R})}\) for all \(0<a<R/r\). Hence by the continuous embedding \(L^{q}(B_{r})\hookrightarrow W^{-1,\,p^{\prime}}(B_{r})\), we obtain
\begin{equation}\label{strong converge to 0}
f_{a_{N}}\rightarrow 0\quad\textrm{ in }W^{-1,\,p^{\prime}}(B_{r})\quad\textrm{as }N\to \infty.
\end{equation}
By (\ref{bounds of rescaled functions ua}) and (\ref{a.e. convergence for rescaled vector}), we can apply Lebesgue's dominated convergence theorem and get
\begin{equation}\label{convergence in Lp' for rescaled vector field}
\lvert \nabla u_{a_{N}}\rvert^{p-2}\nabla u_{a_{N}}\rightarrow \lvert\nabla u_{0}\rvert^{p-2}\nabla u_{0}\quad\textrm{ in }L^{p^{\prime}}(B_{r},\,{\mathbb R}^{n})\quad\textrm{as }N\to \infty.
\end{equation}
It is clear that \(\lVert Z_{a}\rVert_{L^{\infty}(B_{r},\,{\mathbb R}^{n})}\le 1\) for all \(0<a<R/r\). Hence by \cite[Corollary 3.30]{MR2759829}, up to a subsequence, we may assume that
\begin{equation}\label{weakstar convergence}
Z_{a_{N}} \overset{\ast}{\rightharpoonup} Z_{0,\,r}\quad \textrm{ in }L^{\infty}\mleft(B_{r},\,{\mathbb R}^{n}\mright)\quad\textrm{as }N\to \infty
\end{equation}
for some \(Z_{0,\,r}\in L^{\infty}(B_{r},\,{\mathbb R}^{n})\). By lower-semicontinuity of the norm with respect to the weak\(^{\ast}\) topology and (\ref{a.e. convergence for rescaled vector})--(\ref{weak condition}), we get
\[\lVert Z_{0,\,r}\rVert_{L^{\infty}(B_{r},\,{\mathbb R}^{n})}\le 1,\quad Z_{0,\,r}(x)=\frac{\nabla u_{0}(x)}{\lvert \nabla u_{0}(x)\rvert}\quad\textrm{for a.e. }x\in B_{r} \textrm{ with }\nabla u_{0}(x)\not= 0,\]
which implies that
\begin{equation}\label{z0r is subgradient vector field}
Z_{0,\,r}(x)\in \partial \lvert\,\cdot\,\rvert(\nabla u_{0}(x))\quad\textrm{for a.e. }x\in B_{r}.
\end{equation}
Letting \(a=a_{N}\) in (\ref{weak formulation in local domain}) and \(N\to \infty\), we obtain
\begin{equation}\label{limit of weak formulaton}
b\int_{B_{r}}\langle Z_{0,\,r}\mid \nabla \phi\rangle\,dx+\int_{B_{r}}\mleft\langle \lvert\nabla u_{0}\rvert^{p-2}\nabla u_{0}\mathrel{}\middle|\mathrel{}\nabla\phi \mright\rangle\,dx=0\quad\textrm{for all }\phi\in W_{0}^{1,\,p}(B_{r})
\end{equation}
by (\ref{weak formulation in local domain})--(\ref{weakstar convergence}).
Since \(B_{r}\subset{\mathbb R}^{n}\) is arbitrary, (\ref{z0r is subgradient vector field})--(\ref{limit of weak formulaton}) means that \(u_{0}\) is a weak solution to \(L_{b,\,p}u_{0}=0\) in \({\mathbb R}^{n}\), in the sense of Definition \ref{def of weak solution in total space}.
\end{proof}

\section{Maximum principles}\label{Sect Maximum Principles}
In Section \ref{Sect Maximum Principles}, we justify maximum principles for the equation \(L_{b,\,p}u=0\).

We first define subsolutions and supersolutions in the weak sense.
\begin{definition}\upshape\label{def of weak sub/supersol}\upshape
Let \(\Omega\subset{\mathbb R}^{n}\) be a bounded domain. 
A pair \((u,\,Z)\in W^{1,\,p}(\Omega)\times L^{\infty}(\Omega,\,{\mathbb R}^{n})\) is called a \textit{weak} subsolution to \(L_{b,\,p}u=0\) in \(\Omega\), if it satisfies
\begin{equation}\label{weak subsolution}
b\int_{\Omega}\langle Z \mid \nabla\phi\rangle\,dx+\int_{\Omega}\mleft\langle \lvert\nabla u\rvert^{p-2}\nabla u\mathrel{}\middle|\mathrel{}\nabla\phi\mright\rangle\,dx\le 0\end{equation}
for all $0\le\phi\in C_{c}^{\infty}(\Omega)$, and
\begin{equation}\label{gradient vector field}
Z(x)\in\partial\lvert\,\cdot \,\rvert\mleft(\nabla u(x)\mright)\quad\textrm{for a.e. }x\in\Omega.
\end{equation}
Similarly we call a pair \((u,\,Z)\in W^{1,\,p}(\Omega)\times L^{\infty}(\Omega,\,{\mathbb R}^{n})\) a \textit{weak} supersolution \(L_{b,\,p}u=0\) in \(\Omega\), if it satisfies (\ref{gradient vector field}) and 
\begin{equation}\label{weak supersolution}
b\int_{\Omega}\langle Z \mid \nabla\phi\rangle\,dx+\int_{\Omega}\mleft\langle \lvert\nabla u\rvert^{p-2}\nabla u\mathrel{}\middle|\mathrel{}\nabla\phi\mright\rangle\,dx\ge 0
\end{equation}
for all $0\le\phi\in C_{c}^{\infty}(\Omega)$.
For \(u\in W^{1,\,p}(\Omega)\), we simply say that \(u\) is respectively a subsolution and a supersolution to \(L_{b,\,p}u=0\) in the \textit{weak} sense if there is \(Z\in L^{\infty}(\Omega,\,{\mathbb R}^{n})\) such that the pair \((u,\,Z)\) is a weak subsolution and a weak supersolution to \(L_{b,\,p}u=0\) in \(\Omega\).
\end{definition}
\begin{remark}\upshape\label{some remarks on solutions}\upshape
We describe some remarks on our definitions of weak solutions, subsolutions and supersolutions. 
\begin{enumerate}
\item \label{extension remark} 
By an approximation argument, we may extend the test function class of (\ref{weak subsolution}) to
\[D_{+}(\Omega)\coloneqq\{\phi\in W^{1,\,p}(\Omega)\mid \phi\ge 0\textrm{ a.e. in }\Omega,\,\, \supp \phi\subset \Omega\}.\]
Indeed, for \(\phi\in D_{+}(\Omega)\) and \(0<\varepsilon<\dist(\supp\phi,\,\partial\Omega)\), the function,
\[\phi_{\varepsilon}(x)=\int_{\Omega}\phi(x-y)\rho_{\varepsilon}(y)\,dy\quad\textrm{for }x\in\Omega\]
satisfies \(0\le\phi_{\varepsilon}\in C_{c}^{\infty}(\Omega)\).
Here for \(0<\varepsilon<\infty\), \(0\le \rho_{\varepsilon}\in C_{c}^{\infty}(B_{\varepsilon}(0))\) denotes a standard mollifier so that
\[0\le \rho\in C_{c}^{\infty}(B_{1}),\quad \lVert \rho\rVert_{L^{1}({\mathbb R}^{n})}=1,\quad\rho_{\varepsilon}(x)\coloneqq \varepsilon^{-n}\rho(x/\varepsilon)\textrm{ for }x\in{\mathbb R}^{n}.\]
By testing \(\phi_{\varepsilon}\) into (\ref{weak subsolution}) for sufficiently small \(\varepsilon>0\) and letting \(\varepsilon\to 0\), we conclude that if the pair \((u,\,Z)\) satisfies (\ref{weak subsolution}) for all \(0\le\phi\in C_{c}^{\infty}(\Omega)\), then (\ref{weak subsolution}) holds for all \(\phi\in D_{+}(\Omega)\). A similar result is also valid for (\ref{weak supersolution}).
\item By Definition \ref{def of weak solution in total space}--\ref{def of weak sub/supersol}, if a pair \((u,\,Z)\in W^{1,\,p}(\Omega)\times L^{\infty}(\Omega,\,{\mathbb R}^{n})\) satisfies \(L_{b,\,p}u=0\) in \(W^{-1,\,p^{\prime}}(\Omega)\), then \(u\) is clearly both a subsolution and a supersolution to \(L_{b,\,p}u=0\) in \(\Omega\) in the weak sense. Conversely, if a pair \((u,\,Z)\in W^{1,\,p}(\Omega)\times L^{\infty}(\Omega,\,{\mathbb R}^{n})\) is both a weak subsolution and a weak supersolution to \(L_{b,\,p}u=0\) in \(\Omega\), then the pair \((u,\,Z)\) satisfies \(L_{b,\,p}u=0\) in \(W^{-1,\,p^{\prime}}(\Omega)\). Indeed, by the previous remark we have already known that the pair $(u,\,Z)$ satisfies (\ref{weak subsolution}) and (\ref{weak supersolution}) for all \(\phi\in D_{+}(\Omega)\), which clearly yields
\begin{equation}\label{weak formulation in remark}
b\int_{\Omega}\langle Z\mid\nabla\phi\rangle\,dx+\int_{\Omega}\mleft\langle\lvert\nabla u\rvert^{p-2}\nabla u \mathrel{}\middle|\mathrel{} \nabla\phi\mright\rangle\,dx=0
\end{equation}
for all \(\phi\in D_{+}(\Omega)\).
We decompose arbitrary \(\phi\in C_{c}^{\infty}(\Omega)\) by \(\phi=\phi_{+}-\phi_{-}\), where \(\phi_{+}\coloneqq \max\{\,\phi,\,0\,\},\,\phi_{-}\coloneqq \max\{\,-\phi,\,0\,\}\in D_{+}(\Omega)\). By testing \(\phi_{+},\,\phi_{-}\in D_{+}(\Omega)\) into (\ref{weak formulation in remark}), we conclude that (\ref{weak formulation in remark}) holds for all \(\phi\in C_{c}^{\infty}(\Omega)\). By density of \(C_{c}^{\infty}(\Omega)\subset W_{0}^{1,\,p}(\Omega)\), it is clear that (\ref{weak formulation in remark}) is valid for all \(\phi\in W_{0}^{1,\,p}(\Omega)\). 
\item For a bounded domain \(\Omega\subset{\mathbb R}^{n}\), let \(u\in C^{2}(\overline{\Omega})\) satisfy the following two conditions (\ref{nonvanishing})--(\ref{subsolution in the classical sense});
\begin{equation}\label{nonvanishing}
\nabla u(x) \not= 0\quad \textrm{for all }x\in\Omega,
\end{equation}
\begin{equation}\label{subsolution in the classical sense}
(L_{b,\,p}u)(x)=-(b\Delta_{1}u+\Delta_{p}u)(x)\le 0 \quad \textrm{for all }x\in\Omega.
\end{equation}
Then for any fixed \(0\le \phi\in C_{c}^{\infty}(\Omega)\), we have
\[0\ge \int_{\Omega}(L_{b,\,p}u)\phi\,dx=b\int_{\Omega}\mleft\langle\frac{\nabla u}{\lvert\nabla u\rvert}\mathrel{}\middle|\mathrel{}\nabla\phi\mright\rangle\,dx+\int_{\Omega}\mleft\langle \lvert\nabla u\rvert^{p-2}\nabla u\mathrel{}\middle|\mathrel{}\nabla\phi\mright\rangle\,dx,\]
with the aid of integration by parts and (\ref{subsolution in the classical sense}).
We also note that 
\[\partial\lvert\,\cdot\,\rvert(\nabla u(x))=\mleft\{\frac{\nabla u(x)}{\lvert \nabla u(x)\rvert}\mright\}\quad \textrm{for all }x\in\Omega\]
by (\ref{nonvanishing}).
Therefore the pair \((u,\,\nabla u/\lvert\nabla u\rvert)\in W^{1,\,p}(\Omega)\times L^{\infty}(\Omega,\,{\mathbb R}^{n})\) satisfies (\ref{weak subsolution})--(\ref{gradient vector field}).
For such \(u\), we simply say that \(u\) satisfies \(L_{b,\,p}u\le 0\) in \(\Omega\) in the \textit{classical} sense.
\end{enumerate}
\end{remark}
\subsection{Comparison principle}\label{Subsect Comparison Principle}
We justify the comparison principle, i.e., for any subsolution \(u^{-}\) and supersolution \(u^{+}\),
\[u^{-}\le u^{+}\quad\textrm{on }\partial\Omega\quad \textrm{implies that}\quad u^{-}\le u^{+}\quad \textrm{in }\Omega,\]
under the condition that \(u^{+}\) and \(u^{-}\) admits continuity properties in \(\overline{\Omega}\). 
\begin{proposition}\label{comparison principle}
Let \(\Omega\subset{\mathbb R}^{n}\) be a bounded domain.
Assume that \(u^{+},\,u^{-}\in C(\overline{\Omega})\cap W^{1,\,p}(\Omega)\) is a subsolution and a supersolution to \(L_{b,\,p}u=0\) in the weak sense respectively. If \(u^{+},\,u^{-}\) satisfies
\begin{equation}\label{pointwise ineqality}
u^{-}(x)\le u^{+}(x)\quad \textrm{for all }x\in \partial\Omega,
\end{equation}
then \(u^{-}\le u^{+}\) in \(\overline{\Omega}\).
\end{proposition}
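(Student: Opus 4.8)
The plan is to run the standard monotonicity argument for divergence-form operators, with one technical adjustment needed to produce an admissible test function. Fix $\varepsilon>0$ and set $\phi_{\varepsilon}\coloneqq\mleft(u^{-}-u^{+}-\varepsilon\mright)_{+}$. Since $u^{\pm}\in C(\overline{\Omega})\cap W^{1,\,p}(\Omega)$ and $u^{-}-u^{+}\le 0$ on $\partial\Omega$ by (\ref{pointwise ineqality}), the set $G_{\varepsilon}\coloneqq\{x\in\Omega\mid u^{-}(x)-u^{+}(x)>\varepsilon\}$ has closure compactly contained in $\Omega$; hence $\phi_{\varepsilon}$ is nonnegative, belongs to $W^{1,\,p}(\Omega)$, and has support compactly contained in $\Omega$, i.e. $\phi_{\varepsilon}\in D_{+}(\Omega)$. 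By the approximation argument of Remark \ref{some remarks on solutions}\,(\ref{extension remark}), $\phi_{\varepsilon}$ is an admissible test function in both (\ref{weak subsolution}) for $(u^{-},\,Z^{-})$ and (\ref{weak supersolution}) for $(u^{+},\,Z^{+})$, where $Z^{\pm}$ are the vector fields associated with $u^{\pm}$ as in (\ref{gradient vector field}).

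Next I would test (\ref{weak subsolution}) against $\phi_{\varepsilon}$, test (\ref{weak supersolution}) against $\phi_{\varepsilon}$, and subtract, obtaining
\[
b\int_{\Omega}\langle Z^{-}-Z^{+}\mid\nabla\phi_{\varepsilon}\rangle\,dx+\int_{\Omega}\mleft\langle\lvert\nabla u^{-}\rvert^{p-2}\nabla u^{-}-\lvert\nabla u^{+}\rvert^{p-2}\nabla u^{+}\mathrel{}\middle|\mathrel{}\nabla\phi_{\varepsilon}\mright\rangle\,dx\le 0.
\]
Since $\nabla\phi_{\varepsilon}=(\nabla u^{-}-\nabla u^{+})\mathbf{1}_{G_{\varepsilon}}$ a.e., each integral reduces to an integral over $G_{\varepsilon}$ of a pointwise nonnegative integrand: the second one because $z\mapsto\lvert z\rvert^{p-2}z$ is strictly monotone on ${\mathbb R}^{n}$ for $1<p<\infty$, so $\langle\lvert a\rvert^{p-2}a-\lvert b\rvert^{p-2}b\mid a-b\rangle\ge 0$ with equality only when $a=b$; the first one because $Z^{\pm}(x)\in\partial\lvert\,\cdot\,\rvert(\nabla u^{\pm}(x))$ and the subdifferential of the convex function $\lvert\,\cdot\,\rvert$ is a monotone operator, so $\langle Z^{-}(x)-Z^{+}(x)\mid\nabla u^{-}(x)-\nabla u^{+}(x)\rangle\ge 0$ for a.e. $x$. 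A sum of nonnegative quantities being $\le 0$ forces both integrands to vanish a.e. on $G_{\varepsilon}$; by strict monotonicity of the $p$-term this yields $\nabla u^{-}=\nabla u^{+}$ a.e. on $G_{\varepsilon}$, hence $\nabla\phi_{\varepsilon}=0$ a.e. in $\Omega$.

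Then $\phi_{\varepsilon}$ is a $W^{1,\,p}$ function with vanishing gradient on the connected open set $\Omega$, hence constant; since $\phi_{\varepsilon}$ has compact support in $\Omega$ (so it vanishes near $\partial\Omega$), that constant is $0$, i.e. $u^{-}-u^{+}\le\varepsilon$ throughout $\Omega$. Letting $\varepsilon\downarrow 0$ gives $u^{-}\le u^{+}$ in $\Omega$, and then $u^{-}\le u^{+}$ in $\overline{\Omega}$ by continuity of $u^{\pm}$.

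Most of this is routine; the one point requiring care — and the reason for subtracting $\varepsilon$ rather than testing directly with $(u^{-}-u^{+})_{+}$ — is that the latter need not be compactly supported in $\Omega$ and so is not directly admissible in Definition \ref{def of weak sub/supersol}; continuity up to $\partial\Omega$ together with (\ref{pointwise ineqality}) repairs this at the cost of an extra limit. A secondary point to check is the monotonicity of the subgradient term, which is immediate from the definition of $\partial\lvert\,\cdot\,\rvert$; note that only \emph{non}-strict monotonicity is used there, and all the rigidity forcing $\nabla\phi_{\varepsilon}=0$ comes from the strictly monotone $p$-Laplacian part, which is precisely the structural feature absent in the unperturbed one-Laplace equation.
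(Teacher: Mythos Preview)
Your proof is correct and essentially identical to the paper's: the paper uses the test function $w_\delta \coloneqq (u^+ - u^- + \delta)_- = (u^- - u^+ - \delta)_+$, which is exactly your $\phi_\varepsilon$, and the remainder of the argument---monotonicity of $\partial\lvert\,\cdot\,\rvert$ to discard the one-Laplace contribution, strict monotonicity of $z\mapsto\lvert z\rvert^{p-2}z$ to force $\nabla\phi_\varepsilon=0$, then sending the parameter to zero---matches line for line. Your closing observations about why the $\varepsilon$-shift is needed and about the rigidity coming solely from the $p$-term are accurate and in the same spirit as the paper's presentation.
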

Before proving Proposition \ref{comparison principle}, we recall that the mapping $A\colon {\mathbb R}^{n}\ni z\mapsto \lvert z\rvert^{p-2}z\in{\mathbb R}^{n}$ satisfies strict monotonicity,
\begin{equation}\label{strict monotonicity}
\textrm{i.e., }\langle A(z_{2})-A(z_{1})\mid z_{2}-z_{1} \rangle>0\quad\textrm{for all}\quad z_{1},\,z_{2}\in{\mathbb R}^{n}\quad\textrm{with}\quad z_{1}\neq z_{2}.
\end{equation}
\begin{proof}
We take arbitrary \(\delta>0\).
By \(u^{+},\,u^{-}\in C(\overline{\Omega})\) and (\ref{pointwise ineqality}), we can take a subdomain \(\Omega^{\prime}\Subset\Omega\) such that \(u^{-}\le u^{+}+\delta\) in \(\Omega\setminus\Omega^{\prime}\). This implies that the support of the truncated non-negative function \(w_{\delta}\coloneqq\mleft(u^{+}-u^{-}+\delta\mright)_{-}\in W^{1,\,p}(\Omega)\) is contained in \(\overline{\Omega^{\prime}}\subsetneq \Omega\) and therefore \(w_{\delta}\in D_{+}(\Omega)\). Let \(Z^{+},\,Z^{-}\in L^{\infty}(\Omega,\,{\mathbb R}^{n})\) be vector fields such that \((u^{+},\,Z^{+}),\,(u^{-},\,Z^{-})\) satisfies (\ref{weak subsolution})--(\ref{gradient vector field}), (\ref{gradient vector field})--(\ref{weak supersolution}) respectively. As in Remark \ref{some remarks on solutions}, we may test \(w_{\delta}\) in (\ref{weak subsolution}) and (\ref{weak supersolution}). Note that \(\nabla w_{\delta}=-\chi_{\delta}\nabla\mleft(u^{+}-u^{-}\mright)\), where \(\chi_{\delta}\) denotes the characteristic function of \(A_{\delta}\coloneqq\{x\in\Omega\mid u^{+}+\delta\le u^{-}\}\). Hence, we have 
\begin{align*}
0&\le -b\int_{A_{\delta}}\mleft\langle Z_{+}-Z_{-}\mathrel{}\middle|\mathrel{}\nabla u^{+}-\nabla u^{-}\mright\rangle \,dx-\int_{A_{\delta}}\mleft\langle \lvert\nabla u^{+}\rvert^{p-2}\nabla u^{+}-\lvert\nabla u^{-}\rvert^{p-2}\nabla u^{-} \mathrel{}\middle|\mathrel{}\nabla u^{+}-\nabla u^{-}\mright\rangle\,dx\\&\le -\int_{A_{\delta}}\mleft\langle \lvert\nabla u^{+}\rvert^{p-2}\nabla u^{+}-\lvert\nabla u^{-}\rvert^{p-2}\nabla u^{-} \mathrel{}\middle|\mathrel{}\nabla u^{+}-\nabla u^{-}\mright\rangle\,dx.
\end{align*}
Here we have invoked (\ref{gradient vector field}) and monotonicity of the subdifferential operator \(\partial\lvert\,\cdot\,\rvert\).
From (\ref{strict monotonicity}) we can easily check that \(\nabla u^{+}=\nabla u^{-}\) in \(A_{\delta}\), and therefore \(w_{\delta}=0\) in \(W_{0}^{1,\,p}(\Omega)\).
This means that \(u^{-}\le u^{+}+\delta\) a.e. in \(\Omega\). By regularity assumptions \(u^{+},\,u^{-}\in C(\overline{\Omega})\), we conclude that \(u^{-}\le u^{+}+\delta\) in \(\overline{\Omega}\). Since \(\delta>0\) is arbitrary, this completes the proof.
\end{proof}
\begin{remark}\upshape\label{Kruegel mistakes}\upshape
In 2013, Kr\"{u}gel gave another type of definitions of weak subsolutions and weak supersolutions to \(L_{b,\,p}=a\), where \(a\in{\mathbb R}\) is a constant. In Kr\"{u}gel's definition \cite[Definition 4.6]{krugel2013variational}, a function \(u^{-}\in W^{1,\,p}(\Omega)\) is called a \textit{subsolution} to \(L_{b,\,p}=a\) if \(u^{-}\) satisfies
\begin{equation}\label{kruegel def of subsolution}
\int_{D^{-}}\mleft\langle\frac{\nabla u^{-}}{\lvert \nabla u^{-}\rvert}\mathrel{}\middle|\mathrel{}\nabla\phi\mright\rangle\,dx+\int_{F^{-}}\lvert\nabla\phi\rvert\,dx +\int_{\Omega}\mleft\langle\lvert \nabla u^{-}\rvert^{p-2}\nabla u^{-}\mathrel{}\middle|\mathrel{}\nabla\phi\mright\rangle\,dx\le \int_{\Omega}a\phi\,dx
\end{equation}
for all \(\phi\in D_{+}(\Omega)\).
Here \(F^{-}\coloneqq \{x\in\Omega\mid\nabla u^{-}(x)=0\},\,D^{-}\coloneqq \Omega\setminus F^{-}\).
Similarly a function \(u^{+}\in W^{1,\,p}(\Omega)\) is called a \textit{supersolution} to \(L_{b,\,p}=a\) if \(u^{+}\) satisfies
\begin{equation}\label{kruegel def of supersolution}
\int_{D^{+}}\mleft\langle\frac{\nabla u^{+}}{\lvert \nabla u^{+}\rvert}\mathrel{}\middle|\mathrel{}\nabla\phi\mright\rangle\,dx+\int_{F^{+}}\lvert\nabla\phi\rvert\,dx +\int_{\Omega}\mleft\langle\lvert \nabla u^{+}\rvert^{p-2}\nabla u^{+}\mathrel{}\middle|\mathrel{}\nabla\phi\mright\rangle\,dx\ge \int_{\Omega}a\phi\,dx
\end{equation}
for all \(\phi\in D_{+}(\Omega)\).
Here \(F^{+}\coloneqq \{x\in\Omega\mid\nabla u^{+}(x)=0\},\,D^{+}\coloneqq \Omega\setminus F^{+}\).

The comparison principle discussed by Kr\"{u}gel \cite[Theorem 4.8]{krugel2013variational} states that
\begin{equation}\label{krugel theorem 4.8}
(u^{-}-u^{+})_{+}\in D_{+}(\Omega)\textrm{ implies }u^{-}\le u^{+}\quad\textrm{a.e. in }\Omega.
\end{equation}
By testing \((u^{-}-u^{+})_{+}\in D_{+}(\Omega)\) into (\ref{kruegel def of subsolution})(\ref{kruegel def of supersolution}) and substracting the two inequalities, Kr\"{u}gel claims that \(\nabla u^{-}=\nabla u^{+}\) over \(\Omega^{\prime}\coloneqq \{x\in\Omega\mid u^{-}(x)\ge u^{+}(x)\}\) and hence \(u^{-}=u^{+}\) a.e. in \(\Omega^{\prime}\). Despite Kr\"{u}gel's comment that integrals over \(F^{-}\) and \(F^{+}\) cancel out, however, it seems unclear whether
\begin{equation}\label{kruegel claim which seems to fail}
\int_{F^{-}}\lvert \nabla (u^{-}-u^{+})_{+}\rvert\,dx=\int_{F^{+}}\lvert \nabla (u^{-}-u^{+})_{+}\rvert\,dx
\end{equation}
is valid.
This problem is essentially due to the fact that Kr\"{u}gel did not appeal to monotonicity of the subdifferential operator \(\partial\lvert\,\cdot\,\rvert\) and did not regard the term \(\nabla u/\lvert\nabla u\rvert\) as an \(L^{\infty}\)-vector field satisfying the property (\ref{gradient vector field}). In our proof of the comparison principle (Proposition \ref{comparison principle}), we make use of monotonicity of the operator \(\partial\lvert\,\cdot\,\rvert\). Compared to our argument based on monotonicity, the equality (\ref{kruegel claim which seems to fail}) itself seems to be too strong to hold true.
\end{remark}
\subsection{Construction of classical subsolutions}\label{Subsect Construction of subsolutions}
In Section \ref{Subsect Construction of subsolutions}, we construct a classical subsolution to \(L_{b,\,p}u=0\) in an open annulus.
\begin{lemma}\label{Hopf Lemma}
Let \(c\in{\mathbb R}^{n}\setminus\{0\},\,m>0\). Then for each fixed open ball \(B_{R}(x_{\ast})\subset{\mathbb R}^{n}\), there exists a function \(h\in C^{\infty}({\mathbb R}^{n}\setminus\{x_{\ast}\})\) such that
\begin{equation}\label{boundary condition}
h=0\quad \textrm{on }\partial B_{R}(x_{\ast}),\quad 0\le h\le m\quad\textrm{on }\overline{E_{R}(x_{\ast})},
\end{equation}
\begin{equation}\label{partial nu v<0}
\partial_{\nu}h<0\quad \textrm{on }\partial B_{R}(x_{\ast}),
\end{equation}
\begin{equation}\label{almost flat}
\lvert \nabla h\rvert\le \frac{\lvert c\rvert}{2}\quad\textrm{in } E_{R}(x_{\ast}),
\end{equation}
\begin{equation}\label{Subsolution}
v(x)\coloneqq h(x)+\langle c\mid x\rangle \textrm{ satisfies}\quad L_{b,\,p}v\le 0\quad \textrm{in \(E_{R}(x_{\ast})\), in the classical sense}.
\end{equation}
Here \(E_{R}(x_{\ast})\coloneqq B_{R}(x_{\ast})\setminus \overline{B_{R/2}(x_{\ast})}\) is an open annulus, and \(\nu\) in (\ref{partial nu v<0}) denotes the exterior unit vector normal to \(B_{R}(x_{\ast})\).
\end{lemma}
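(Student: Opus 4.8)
The plan is to build $h$ from the classical Hopf exponential barrier (\ref{Hopf comparison ft}). Set
\[
h(x) \coloneqq \beta\mleft(e^{-\alpha\lvert x-x_{\ast}\rvert^{2}} - e^{-\alpha R^{2}}\mright),
\]
with constants $\alpha,\,\beta>0$ to be fixed, \emph{in that order}, at the very end. Then $h\in C^{\infty}({\mathbb R}^{n})\subset C^{\infty}({\mathbb R}^{n}\setminus\{x_{\ast}\})$ and $h=0$ on $\partial B_{R}(x_{\ast})$. On $\overline{E_{R}(x_{\ast})}$ we have $\lvert x-x_{\ast}\rvert\in[R/2,\,R]$, hence $0\le h\le\beta(e^{-\alpha R^{2}/4}-e^{-\alpha R^{2}})\le\beta$, so imposing $\beta\le m$ gives (\ref{boundary condition}). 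Differentiating, $\nabla h(x)=-2\alpha\beta(x-x_{\ast})e^{-\alpha\lvert x-x_{\ast}\rvert^{2}}$, so on $\partial B_{R}(x_{\ast})$ the outward normal derivative is $\partial_{\nu}h=-2\alpha\beta R\,e^{-\alpha R^{2}}<0$ for every $\beta>0$, giving (\ref{partial nu v<0}); and $\lvert\nabla h(x)\rvert=2\alpha\beta\lvert x-x_{\ast}\rvert e^{-\alpha\lvert x-x_{\ast}\rvert^{2}}\le 2\alpha\beta R$ on $E_{R}(x_{\ast})$, so imposing $\beta\le\lvert c\rvert/(4\alpha R)$ yields (\ref{almost flat}). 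In short, (\ref{boundary condition})--(\ref{almost flat}) hold once $\beta$ is small enough, whatever $\alpha$ is.

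The substance of the lemma is (\ref{Subsolution}). Since $\langle c\mid x\rangle$ is affine we have $\nabla v=\nabla h+c$ and $\nabla^{2}v=\nabla^{2}h$, and the bound $\lvert\nabla h\rvert\le\lvert c\rvert/2$ forces $\lvert c\rvert/2\le\lvert\nabla v\rvert\le 3\lvert c\rvert/2$ on $E_{R}(x_{\ast})$; in particular $\nabla v$ never vanishes there and $v\in C^{\infty}(E_{R}(x_{\ast}))$, so $L_{b,\,p}v$ is classically defined and can be written in non-divergence form as
\[
L_{b,\,p}v=-\divx\mleft(\nabla_{z}E(\nabla v)\mright)=-\trace\mleft(\nabla_{z}^{2}E(\nabla v)\,\nabla^{2}v\mright)=-\trace\mleft(\nabla_{z}^{2}E(\nabla h+c)\,\nabla^{2}h\mright).
\]
A direct computation gives $\nabla^{2}h(x)=2\alpha\beta e^{-\alpha\lvert x-x_{\ast}\rvert^{2}}\mleft(2\alpha\,(x-x_{\ast})\otimes(x-x_{\ast})-I\mright)$, so
\[
L_{b,\,p}v(x)=-2\alpha\beta e^{-\alpha\lvert x-x_{\ast}\rvert^{2}}\mleft[\,2\alpha\mleft\langle\nabla_{z}^{2}E(\nabla h+c)\,(x-x_{\ast})\mathrel{}\middle|\mathrel{}(x-x_{\ast})\mright\rangle-\trace\mleft(\nabla_{z}^{2}E(\nabla h+c)\mright)\,\mright].
\]
It therefore suffices to make the bracket nonnegative on $E_{R}(x_{\ast})$.

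Here I would invoke the uniform ellipticity of $\nabla_{z}^{2}E$ on the \emph{fixed} spherical shell $\{\lvert c\rvert/2\le\lvert z\rvert\le 3\lvert c\rvert/2\}$, exactly as in Section \ref{Sect Regularity outside facets} with $\mu_{0}=\lvert c\rvert/2$ and $M_{0}=3\lvert c\rvert/2$: there are constants $0<\lambda\le\Lambda<\infty$ depending only on $b,\,p,\,\lvert c\rvert$ such that $\langle\nabla_{z}^{2}E(z)\zeta\mid\zeta\rangle\ge\lambda\lvert\zeta\rvert^{2}$ and $\trace\mleft(\nabla_{z}^{2}E(z)\mright)\le n\Lambda$ whenever $\lvert z\rvert$ lies in that shell. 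Since $\nabla h+c$ lies in the shell on $E_{R}(x_{\ast})$ and $\lvert x-x_{\ast}\rvert>R/2$ there, the bracket is at least $2\alpha\lambda(R/2)^{2}-n\Lambda=\alpha\lambda R^{2}/2-n\Lambda$. Hence the order of choices: first fix $\alpha\coloneqq 2n\Lambda/(\lambda R^{2})$ (anything larger works), which makes the bracket nonnegative and so $L_{b,\,p}v\le 0$ in $E_{R}(x_{\ast})$ \emph{provided} $\lvert\nabla h\rvert\le\lvert c\rvert/2$; then fix any $\beta$ with $0<\beta\le\min\{m,\,\lvert c\rvert/(4\alpha R)\}$, which secures $0\le h\le m$ and $\lvert\nabla h\rvert\le\lvert c\rvert/2$ on $E_{R}(x_{\ast})$, and with it all of (\ref{boundary condition})--(\ref{Subsolution}).

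The one point demanding care — and essentially the only obstacle — is precisely this ordering of constants: the ellipticity bounds $\lambda,\,\Lambda$ that determine $\alpha$ must be those of the fixed shell around $c$, not of a region that itself depends on $\beta$. Once one notes that $\lvert\nabla h\rvert\le\lvert c\rvert/2$ confines $\nabla v$ to that shell \emph{independently of} $\alpha$, the apparent circularity disappears, and the remaining estimates are the routine Hopf computation for a uniformly elliptic operator with frozen coefficients. One could equally use the power barrier $h(x)=\beta(\lvert x-x_{\ast}\rvert^{-\alpha}-R^{-\alpha})$ of (\ref{alternative choice of h}) — which is smooth only off $x_{\ast}$, accounting for the regularity asserted in the statement — by the same scheme, merely replacing the expression for $\nabla^{2}h$.
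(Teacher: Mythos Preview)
Your proof is correct and follows essentially the same strategy as the paper: the Hopf exponential barrier together with the observation that (\ref{almost flat}) confines $\nabla v$ to the fixed shell $\lvert c\rvert/2\le\lvert z\rvert\le 3\lvert c\rvert/2$, on which $\nabla_{z}^{2}E$ is uniformly elliptic with constants $\lambda,\,\Lambda$ independent of $\alpha,\,\beta$. The paper in fact gives two proofs --- one with the exponential barrier and no scaling factor (forcing all constraints onto $\alpha$ simultaneously), and one with the power barrier (\ref{alternative choice of h}) scaled by $\beta$ --- and your version neatly combines the two by attaching the $\beta$ to the exponential, which decouples the subsolution constraint (fixing $\alpha$) from the size constraints (\ref{boundary condition}) and (\ref{almost flat}) (fixing $\beta$); your direct trace estimate is a mild coarsening of the paper's Pucci extremal operator computation, but entirely adequate here.
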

Before proving Lemma \ref{Hopf Lemma}, we fix some notations on matrices. For a given \(n\times n\) matrix \(A\), we write \(\trace (A)\) as the trace of \(A\). We denote \({\mathbf 1}_{n}\) by the \(n\times n\) unit matrix. For column vectors \(x=(x_{i})_{i},\,y=(y_{i})_{i}\in{\mathbb R}^{n}\), we define a tensor $x\otimes y$, which is regarded as a real-valued $n\times n$ matrix \[x\otimes y\coloneqq (x_{i}y_{j})_{i,\,j}=\begin{pmatrix}x_{1}y_{1}&\cdots & x_{1}y_{n}\\ \vdots & \ddots &\vdots\\ x_{n}y_{1}&\cdots &x_{n}y_{n}\end{pmatrix}.\]

Assume that \(h\) satisfies (\ref{almost flat}). Then the triangle inequality implies that
\begin{equation}\label{a trick}
0<\frac{1}{2}\lvert c\rvert\le \lvert\nabla v\rvert\le \frac{3}{2}\lvert c\rvert\quad\textrm{in}\quad E_{R}(x_{\ast}).
\end{equation}
The estimate (\ref{a trick}) allows us to calculate \(L_{b,\,p}v\) in the classical sense over \(E_{R}(x_{\ast})\). By direct calculations we have 
\[-L_{b,\,p}v=+\divx\mleft(\nabla_{z}E(\nabla v)\mright)=\sum\limits_{i,\,j=1}^{n}\partial_{z_{i}z_{j}}E(\nabla v)\partial_{x_{i}x_{j}}v=\trace \mleft(\nabla_{z}^{2}E(\nabla v)\nabla^{2}h\mright)\quad \textrm{in }E_{R}(x_{\ast}).\]
We note that \(\nabla^{2}v=\nabla^{2} h\) by definition.
Here we recall a well-known result on Pucci's extremal operators. For given constants \(0<\lambda\le\Lambda<\infty\) and a fixed \(n\times n\) symmetric matrix \(M\), we define
\[{\mathcal M}^{-}(M,\,\lambda,\,\Lambda)\coloneqq \lambda\sum_{\lambda_{i}>0}\lambda_{i}+\Lambda\sum_{\lambda_{i}<0}\lambda_{i},\]
where \(\lambda_{i}\in{\mathbb R}\) are the eigenvalues of \(M\).
The following formula is a well-known result \cite[Remark 5.36]{MR3887613} ;
\[{\mathcal M}^{-}(M,\,\lambda,\,\Lambda)=\inf\mleft\{\trace(AM)\mathrel{}\middle|\mathrel{} A\in{\mathcal A}_{\lambda,\,\Lambda}\mright\},\]
where \({\mathcal A}_{\lambda,\,\Lambda}\) denotes the set of all symmetric matrices whose eigenvalues all belong to the closed interval \([\lambda,\,\Lambda]\).
By (\ref{a trick}) $L_{b,\,p}v$ is an uniformly elliptic operator in $E_{R}(x_{\ast})$. This enables us to find constants $0<\lambda\le \Lambda<\infty$, depending on $0<b<\infty,\,1<p<\infty,\,\lvert c\rvert>0$, such that $\nabla_{z}^{2}E(\nabla v)\in \lbrack \lambda,\,\Lambda\rbrack$ in $E_{R}(x_{\ast})$. Combining these results, it suffices to show that
\begin{equation}\label{eigenvalue calculation}
{\mathcal M}^{-}\left(\nabla^{2}h(x),\,\lambda,\,\Lambda\right)=\lambda\sum_{\lambda_{i}>0}\lambda_{i}(x)+\Lambda\sum_{\lambda_{i}<0}\lambda_{i}(x)>0\quad\textrm{for all }x\in E_{R}(x_{\ast}),
\end{equation}
where \(\lambda_{i}(x)\in{\mathbb R}\) denotes the eigenvalues of \(\nabla^{2}h(x)\).

Now we construct classical subsolutions. Our first construction is a modification of that by E. Hopf \cite{MR50126}.
\begin{proof}
Without loss of generality we may assume \(x_{\ast}=0\).
We define 
\begin{equation}\label{Hopf's barrier}
h(x)\coloneqq e^{-\alpha\lvert x\rvert^{2}}-e^{-\alpha R^{2}}\quad\textrm{for }x\in {\mathbb R}^{n}.
\end{equation}
Here \(\alpha=\alpha(b,\,n,\,p,\,\lvert c\rvert,\,R)>0\) is a sufficiently large constant to be chosen later. It is clear that \(0\le h(x)\le e^{-\alpha R^{2}/4}-e^{-\alpha R^{2}}\) in \(\overline{E_{R}(0)}\). We first let \(\alpha>0\) be so large that
\begin{equation}\label{first condition of alpha}
m e^{\alpha R^{2}}\ge e^{3\alpha R^{2}/4}-1.
\end{equation}
From (\ref{first condition of alpha}), we can easily check (\ref{boundary condition}).
By direct calculation we get
\[\nabla h(x)=-2\alpha e^{-\alpha\lvert x\rvert^{2}}x,\textrm{ and }\nabla^{2}h(x)=-2\alpha e^{-\alpha\lvert x\rvert^{2}}{\mathbf 1}_{n}+4\alpha^{2}e^{-\alpha\lvert x\rvert^{2}}x\otimes x\quad\textrm{for each }x\in{\mathbb R}^{n}.\]
From this result, (\ref{partial nu v<0}) is clear.
Also, we have
\[\lvert \nabla h(x)\rvert\le 2\alpha Re^{-\alpha R^{2}/4}\quad \textrm{for all }x\in E_{R}(0).\]
Let \(\alpha>0\) be so large that
\begin{equation}\label{second condition of alpha}
\alpha e^{-\alpha R^{2}/4}\le \frac{\lvert c\rvert}{4R},
\end{equation}
then we can check that $h$ satisfies (\ref{almost flat}).
Now we prove (\ref{Subsolution}) to complete the proof.
For \(x\not=0\), the eigenvalues of \(\nabla^{2}h(x)\) are given by
\[\mleft\{\begin{array}{ccc}
\lambda_{\parallel}(x)& \coloneqq &  4\alpha^{2}\lvert x\rvert^{2}e^{-\alpha\lvert x\rvert^{2}}-2\alpha e^{-\alpha\lvert x\rvert^{2}},   \\ 
\lambda_{\perp}(x) &\coloneqq & -2\alpha e^{-\alpha\lvert x\rvert^{2}}, \end{array}\mright. \textrm{ and the geometric multiplicities are }\mleft\{\begin{array}{c}1,\\ n-1. \end{array} \mright. \]
Assume that $\alpha$ satisfies
\begin{equation}\label{third condition of alpha}
\alpha> \frac{2}{R^{2}},
\end{equation}
so that $\lambda_{\parallel}>0>\lambda_{\perp}$ in $E_{R}(0)$. Therefore we get
\begin{align*}
{\mathcal M}^{-}\left(\nabla^{2}h(x),\,\lambda,\,\Lambda\right)&=\lambda\lambda_{\parallel}(x)+(n-1)\Lambda\lambda_{\perp}(x)=2\alpha e^{-\alpha\lvert x\rvert^{2}}\left[\lambda(2\alpha\lvert x\rvert^{2}-1)-(n-1)\Lambda \right]\\&\ge 2\alpha e^{-\alpha\lvert x\rvert^{2}}\left[\lambda\left(\frac{R^{2}}{2}\alpha-1\right)-(n-1)\Lambda \right].
\end{align*}
We can take sufficiently large \(\alpha=\alpha(\lvert c\rvert,\,m,\,n,\,R,\,\lambda,\,\Lambda)>0\) so that \(\alpha\) satisfies (\ref{eigenvalue calculation}) and (\ref{first condition of alpha})--(\ref{third condition of alpha}). For such constant \(\alpha>0\), the function \(v\) defined as in (\ref{Hopf's barrier}) satisfies (\ref{boundary condition})--(\ref{Subsolution}).
\end{proof}
It is possible to construct an alternative function \(h\in C^{\infty}({\mathbb R}^{n}\setminus\{x_{0}\})\) which satisfies (\ref{boundary condition})--(\ref{Subsolution}). We give another proof of Lemma \ref{Hopf Lemma}, which is derived from \cite[Chapter 2.8]{MR2356201}.
\begin{proof}
Without loss of generality we may assume \(x_{\ast}=0\).
We define 
\begin{equation}\label{Another Hopf's barrier}
h(x)\coloneqq \beta\mleft[\lvert x\rvert^{-\alpha}-R^{-\alpha}\mright]\quad\textrm{for }x\in {\mathbb R}^{n}\setminus\{0\}.
\end{equation}
We will later determine positive constants \(\alpha,\,\beta>0\), depending on \(b,\,m,\,n,\,p,\,\lvert c\rvert,\,R\). It is clear that \(0\le h(x)\le \beta R^{-\alpha}(2^{\alpha}-1)\) in \(\overline{E_{R}(0)}\). We first let \(\alpha,\,\beta>0\) satisfy 
\begin{equation}\label{first condition of alpha and beta}
\beta\le \frac{mR^{\alpha}}{2^{\alpha}-1}.
\end{equation}
Then \(h\) satisfies (\ref{boundary condition}). By direct calculation we get
\[\nabla h(x)=-\frac{\alpha\beta x}{\lvert x\rvert^{\alpha+2}},\textrm{ and }\nabla^{2}h(x)=\frac{\alpha\beta}{\lvert x\rvert^{\alpha+2}}\mleft[(\alpha+2)\frac{x\otimes x}{\lvert x\rvert^{2}}-{\mathbf 1}_{n}\mright]\]
for each \(x\in {\mathbb R}^{n}\setminus\{0\}\). The estimate (\ref{partial nu v<0}) is clear by this result. Also, we have
\[\lvert \nabla h(x)\rvert\le \frac{\alpha\beta}{(R/2)^{\alpha+1}}\quad \textrm{for all }x\in E_{R}(0).\]
Let \(\alpha,\,\beta>0\) satisfy
\begin{equation}\label{second condition of alpha and beta}
\beta\le \frac{\lvert c\rvert(R/2)^{\alpha+1}}{2\alpha},
\end{equation}
then we can check that $h$ satisfies (\ref{almost flat}).
Now we prove (\ref{Subsolution}) to complete the proof.
For \(x\not=0\), the eigenvalues of \(\nabla^{2}h(x)\) are given by
\[\mleft\{\begin{array}{ccc}
\lambda_{\parallel}(x)& \coloneqq & (\alpha+1)\alpha\beta\lvert x\rvert^{-\alpha-2},   \\ 
\lambda_{\perp}(x) &\coloneqq & -\alpha\beta\lvert x\rvert^{-\alpha-2}, \end{array}\mright. \textrm{ and the geometric multiplicities are }\mleft\{\begin{array}{c}1,\\ n-1. \end{array} \mright.\]
It is clear that \(\lambda_{\parallel}>0>\lambda_{\perp}\) in \({\mathbb R}^{n}\setminus\{0\}\), and therefore
\[{\mathcal M}^{-}\left(\nabla^{2}h(x),\,\lambda,\,\Lambda\right)=\alpha\beta\lvert x\rvert^{-\alpha-2}\left[(\alpha+1)\lambda-(n-1)\Lambda\right]\]
for all $x\in E_{R}(0)$.
We take and fix sufficiently large \(\alpha=\alpha(n,\,\lambda,\,\Lambda)>0\) so that \(\alpha\) satisfies (\ref{eigenvalue calculation}).
For such \(\alpha>0\), we choose sufficiently small \(\beta=\beta(\lvert c\rvert,\,R,\,\alpha)>0\) so that \(\beta\) satisfies (\ref{first condition of alpha and beta})--(\ref{second condition of alpha and beta}). Then the function \(h\) defined as in (\ref{Another Hopf's barrier}) satisfies (\ref{boundary condition})--(\ref{Subsolution}).
\end{proof}

\subsection{Strong maximum principle}\label{Subsect Strong Maximum Principle}
We prove the strong maximum principle (Theorem \ref{strong maximum principle}).
\begin{proof}
Let \(D_{0}\subset D\) be a connected component of the open set \(D\), and \(x_{0}\in D_{0}\).
Without loss of generality we may assume that \(x_{0}=0\) and \(u(0)=0\). By Lemma \ref{regularity at regular point}, it is clear that \(\partial u(0)=\{\nabla u(0)\}\not=\{0\}\).
We set a vector \(c\coloneqq \nabla u(0)\in {\mathbb R}^{n}\setminus\{0\}\) and a relatively closed set
\[\Sigma\coloneqq \{x\in D_{0}\mid u(x)=\langle c\mid x\rangle\}.\]
and we will prove that \(\Sigma=D_{0}\).
It is also clear that \(0\in \Sigma\) and hence \(\Sigma\not= \emptyset\).
Suppose for contradiction that \(\Sigma\subsetneq D_{0}\). Then it follows that \(\partial\Sigma\cap D_{0}\not=\emptyset\), since \(D_{0}\) is connected. 
We may take and fix a point \(x_{\ast}\in D_{0}\setminus \Sigma\) such that
\(\dist(x_{\ast},\,\Sigma)<\dist(x_{\ast},\,\partial D_{0})\).
By extending a closed ball centered at \(x_{\ast}\) until it hits \(\Sigma\), we can take a point \(y_{\ast}\in D_{0}\) and a closed ball \(\overline{B_{R}(x_{\ast})}\subset D_{0}\) such that \(y_{\ast}\in\partial B_{R}(x_{\ast})\cap \Sigma\) and \(u(x)>\langle c\mid x\rangle\) for all \(x \in B_{R}(x_{\ast})\).
We note that
\begin{equation}\label{boundary value}
\left\{\begin{array}{cccc}
0& = & \min\limits_{x\in\partial B_{R}(x_{\ast})} \mleft(u(x)-\langle c\mid x\rangle\mright), & \textrm{achieved at \(y_{\ast}\in\partial B_{R}(x_{\ast})\)}, \\
m & \coloneqq & \min\limits_{x\in\partial B_{R/2}(x_{\ast})} \mleft(u(x)-\langle c\mid x\rangle\mright)>0, 
\end{array} \right.
\end{equation}
by construction of \(B_{R}(x_{\ast})\). Let \(h\in C^{\infty}({\mathbb R}^{n}\setminus \{x_{\ast}\})\) be an auxiliary function as in Lemma \ref{Hopf Lemma}. Then from (\ref{boundary value}) it is easy to check that \(v\coloneqq h+\langle c\mid x\rangle\) satisfies \(v\le u\) on \(\partial E_{R}(x_{\ast})\), in the sense of (\ref{pointwise ineqality}). By Proposition \ref{comparison principle}, we have \(v\le u\) on \(\overline{E_{R}(x_{\ast})}\). Hence \(0\le u-\langle c\mid x\rangle-h\) in \(\overline{E_{R}(x_{\ast})}\). This inequality becomes equality at \(y_{\ast}\in\partial E_{R}(x_{\ast})\) by (\ref{boundary condition}) and (\ref{boundary value}). Therefore the function \(u(x)-\langle c\mid x\rangle-h(x)\,(x\in \overline{E_{R}(x_{\ast})})\) takes its minimum at \(y_{\ast}\in \partial B_{R}(x_{\ast})\). Also by \(y_{\ast}\in \Sigma\) and the subgradient inequality \[u(x)\ge \langle c\mid x\rangle\quad\textrm{for all }x\in \Omega,\] it is clear that the function \(w(x)\coloneqq u(x)-\langle c\mid x\rangle\,(x\in D_{0})\) takes its minimum \(0\) at \(y_{\ast}\in D_{0}\). We note that \(w,\,w-h\in C^{1}(D_{0})\) by Lemma \ref{regularity at regular point}. By calculating classical partial derivatives at \(y_{\ast}\) in the direction \(\nu_{0}\coloneqq (y_{\ast}-x_{\ast})/R\), we obtain
\[0\ge \partial_{\nu_{0}}(w-h)(y_{\ast})=-\partial_{\nu_{0}}h(y_{\ast})>0.\]
This is a contradiction, and therefore \(\Sigma=D_{0}\).
\end{proof}
\section{Proofs of main theorems}\label{Sect Liouville Theorem}
In Section \ref{Sect Liouville Theorem}, we give proofs of the Liouville-type theorem (Theorem \ref{Liouville theorem}) and the \(C^{1}\)-regularity theorem (Thorem \ref{C1 regularity}).
\subsection{Liouville-type theorem}\label{Subsect Liouville}
For a preparation, we prove Lemma \ref{determination of the shape of the facet} below.
\begin{lemma}\label{determination of the shape of the facet}
Let $u$ be a real-valued convex function in ${\mathbb R}^{n}$. Assume that \(u\) satisfies the following,
\begin{enumerate}
\item The facet of $u$, $F\subset{\mathbb R}^{n}$, satisfies $\emptyset\subsetneq F\subsetneq {\mathbb R}^{n}$.
\item $u$ attains its minimum $0$.
\item $u$ is affine in each connected component of $D\coloneqq {\mathbb R}^{n}\setminus F$. \label{Condition affine on connected comp}
\end{enumerate}
Then up to a rotation and a shift translation, $u$ can be expressed as either of the following three types of piecewise-linear functions.
\begin{equation}\label{Possible Convex Solution Type 1}
u(x)=\max\{\,t_{1}x_{1},\,0\,\} \quad\textrm{for all }x\in{\mathbb R}^{n},
\end{equation}
\begin{equation}\label{Possible Convex Solution Type 2}
u(x)=\max\{\,t_{1}x_{1},\,-t_{2}x_{1}\,\}\quad\textrm{for all }x\in{\mathbb R}^{n},
\end{equation}
\begin{equation}\label{Possible Convex Solution Type 3}
u(x)=\max\{\,t_{1}x_{1},\,0,\,-t_{2}(x_{1}+l_{0})\,\}\quad \textrm{for all }x\in{\mathbb R}^{n}.
\end{equation}
Here $t_{1},\,t_{2},\,l>0$ are constants.
\end{lemma}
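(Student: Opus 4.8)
The plan is to use conditions (1)--(3) to pin down the geometry of the facet $F$ and of the connected components of $D={\mathbb R}^n\setminus F$, and then to read off the three normal forms. The starting observation is that $F$ is a nonempty closed convex set (it is the set where the convex function $u$ attains its minimum $0$), and since $F\subsetneq{\mathbb R}^n$ and $F$ is convex, it is contained in a closed half-space; conversely, because $u$ is finite and convex on all of ${\mathbb R}^n$, the recession cone of $F$ cannot be all of ${\mathbb R}^n$. I would first argue that $F$ must in fact be (after a rotation) of the form $\{x: a\le x_1\le b\}$ with $-\infty\le a\le b\le +\infty$, i.e. a slab perpendicular to a single direction, possibly a half-space or the whole hyperplane case $a=b$. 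The reason is condition (3): if $F$ had nonempty interior as a full-dimensional convex body that is bounded in two independent directions, then $D$ would be connected and $u$ would be globally affine off a bounded-in-some-direction set, which is incompatible with $u$ being a nonconstant convex function attaining its minimum on $F$ with $F\ne {\mathbb R}^n$; a careful way to see this is to use the supporting hyperplane theorem at a boundary point of $F$ together with the fact that on each component of $D$, $u$ is affine and $\ge 0$ with equality on the part of $\partial F$ touching that component.

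Next I would analyze the components of $D$. After the rotation so that $F$ lies between the hyperplanes $\{x_1=a\}$ and $\{x_1=b\}$ (with the convention that a missing inequality means that side is absent), $D$ has at most two connected components: $D^-=\{x_1<a\}$ and $D^+=\{x_1>b\}$ (one of them empty if $F$ is a half-space; both "half-lines" collapsing appropriately if $F$ is a hyperplane $\{x_1=a\}$, in which case $b=a$). On $D^+$, $u$ is affine, say $u(x)=\langle \xi^+\mid x\rangle + c^+$; continuity across $\partial F=\{x_1=b\}$ and the fact that $u\equiv 0$ on $F$ force $\xi^+$ to be parallel to $e_1$ and $u=t_1(x_1-b)$ there for some $t_1>0$ (positivity because $u>0$ off $F$ and $u$ attains its min on $F$). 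Symmetrically, on $D^-$ we get $u=-t_2(x_1-a)$ for some $t_2>0$. Shifting so that either $a=0$ (half-space / hyperplane cases) or $a=0,\ b=l_0$, or $a=-l_0,\ b=0$ as convenient, and writing $u$ as the maximum of the finitely many affine pieces $t_1x_1$, $0$, $-t_2(x_1+l_0)$ that occur, yields exactly the three forms (\ref{Possible Convex Solution Type 1}), (\ref{Possible Convex Solution Type 2}), (\ref{Possible Convex Solution Type 3}): form (\ref{Possible Convex Solution Type 1}) when exactly one of $D^\pm$ is nonempty (so $F$ is a half-space), form (\ref{Possible Convex Solution Type 2}) when $F$ is a hyperplane ($a=b$) so both sides are present but there is no flat middle piece, and form (\ref{Possible Convex Solution Type 3}) when $F$ is a genuine slab of positive width $l_0$ with both $D^\pm$ nonempty.

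The main obstacle I anticipate is the first step: ruling out "fat" facets, i.e. showing $F$ really is a slab perpendicular to one direction rather than, say, a lower-dimensional or higher-codimension convex set or a convex body bounded in several directions. The clean argument uses convexity of $u$ plus condition (3): fix a connected component $D_0$ of $D$ on which $u$ is the affine function $a_0(x)=\langle\xi_0\mid x\rangle+\gamma_0$; since $u\ge 0$ everywhere with $u=a_0$ on $D_0$ and $u=0$ on $F$, and since $u$ is convex, the hyperplane $\{a_0=0\}$ must be a supporting hyperplane of $F$ and $D_0$ must lie entirely on one side of it, which pins the recession directions; doing this for every component and using that $F=\bigcap$(half-spaces defining these supporting hyperplanes) together with $F\ne{\mathbb R}^n$ forces all the $\xi_0$ to be parallel (otherwise $u$, being the convex function whose graph is the upper envelope of these affine pieces and the zero function, would fail to have $F$ as stated, or would fail to be finite). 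Once the directions are shown parallel, everything else is bookkeeping with the one variable $x_1$, and the enumeration of cases by the number of nonempty components of $D$ and the width of the slab gives the three types.
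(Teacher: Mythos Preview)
Your approach is correct and follows essentially the same route as the paper: use supporting hyperplanes of $F$ together with the affineness on components and the constraint $u\ge 0$ to force all the affine gradients parallel to a single direction, then case-split. The paper's execution is a bit more direct in handling your ``main obstacle'': rather than first proving that $F$ is a slab, it fixes one boundary point $0\in\partial F$ with supporting direction $e_1$, observes that the open half-space $H_{e_1,0}^+$ lies in a single component $D_1$ on which $u(x)=\langle c\mid x\rangle$, and rules out $c\not\parallel e_1$ by the concrete contradiction that $H_{e_1,0}^+\cap H_{c,0}^-$ would then be a nonempty subset of $D_1$ on which $u<0$. This immediately yields $H_{e_1,0}\subset\partial F\subset F\subset\{x_1\le 0\}$, and the three forms come from whether $\partial F=H_{e_1,0}$ (Types~1 or~2, according as $F$ is the closed half-space or just the hyperplane) or $\partial F\supsetneq H_{e_1,0}$ (repeat the same argument at a second boundary point to show the second supporting hyperplane is also perpendicular to $e_1$, giving Type~3). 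In particular the paper never needs the somewhat circular identity ``$F=\bigcap(\text{half-spaces from the components})$'' nor the vague clause ``$u$ would fail to be finite''; the single explicit contradiction above does all the work.
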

Before starting the proof of Lemma \ref{Liouville theorem}, we introduce notations on affine hyperplanes. For \(c\in{\mathbb R}^{n}\setminus\{0\}\) and \(x_{0}\in {\mathbb R}^{n}\), we define 
\[\left\{\begin{array}{ccc}
H_{c,\,x_{0}}&\coloneqq& \mleft\{x\in{\mathbb R}^{n}\mathrel{} \middle|\mathrel{}\langle c\mid x-x_{0}\rangle =0\mright\},\\
H_{c,\,x_{0}}^{-}&\coloneqq& \mleft\{x\in{\mathbb R}^{n}\mathrel{} \middle|\mathrel{}\langle c\mid x-x_{0}\rangle < 0\mright\},\\
H_{c,\,x_{0}}^{+}&\coloneqq& \mleft\{x\in{\mathbb R}^{n}\mathrel{} \middle|\mathrel{}\langle c\mid x-x_{0}\rangle > 0\mright\}.\\
\end{array}\right.\]
In order to prove the Liouville-type theorem, we will make use of the supporting hyperplane theorem, which states that for any non-empty closed convex set \(C\subset{\mathbb R}^{n}\) and \(x_{0}\in\partial C\), there exists \(c\in{\mathbb R}^{n}\setminus\{0\}\) such that
\[\sup\limits_{x\in C}\,\langle c\mid x\rangle\le \langle c\mid x_{0}\rangle,\quad\textrm{and in particular}\quad H_{c,\,x_{0}}^{+}\subset {\mathbb R}^{n}\setminus C.\]
For such \(c\in{\mathbb R}^{n}\setminus\{0\}\), a hyperplane \(H_{c,\,x_{0}}\) is often called a supporting hyperplane for \(C\) at the boundary point \(x_{0}\).
For the proof of the supporting hyperplane theorem, see \cite[Proposition 1.5.1]{MR2830150}.

\begin{proof}
Since ${\mathbb R}^{n}$ is connected and $F\subset{\mathbb R}^{n}$ is a closed convex set, it follows that \(\partial F\not= \emptyset\).
Without loss of generality we may assume that \(0\in\partial F\) and \(u(0)=0\).

By the supporting hyperplane theorem, we can take and fix a supporting hyperplane for \(F\) at the boundary point \(0\), which we write \(H_{c,\,0}\subset {\mathbb R}^{n}\). By rotation, we may assume that \(c=e_{1}\). Let \(D_{1}\) be the connected component of \(D\) which contains \(H_{e_{1},\,0}^{+}\subset{\mathbb R}^{n}\setminus F=D\).
By the assumption \ref{Condition affine on connected comp} and \(u(0)=0\), it follows that there exists \(c\in {\mathbb R}^{n}\setminus\{0\}\) such that \(u(x)=\langle c\mid x\rangle\) for all \(x\in D_{1}\). We should note that \(H_{c,\,0}=H_{e_{1},\,0}\) and hence \(c=t_{1}e_{1}\) for some \(t_{1}\in(0,\,\infty)\), since otherwise it follows that \(H_{e_{1},\,0}^{+}\cap H_{c,\,0}^{-}\not=\emptyset\) and \(0\le u(x_{0})=\langle c\mid x_{0}\rangle<0\) for any \(x_{0}\in H_{e_{1},\,0}^{+}\cap H_{c,\,0}^{-}\). The result \(H_{c,\,0}=H_{e_{1},\,0}\) also implies that \(H_{e_{1},\,0}\subset \partial F\subset F\subset \{x\in{\mathbb R}^{n}\mid x_{1}\le 0\}=H_{e_{1},\,0}^{-}\cup H_{e_{1},\,0}\). Now we will deduce three possible representations of \(u\).

If \(\partial F=H_{e_{1},\,0}\), then we have either \(F=H_{e_{1},\,0}^{-}\cup H_{e_{1},\,0}\) or \(F=H_{e_{1},\,0}\), since the open set \(H_{e_{1},\,0}^{-}=\{x\in{\mathbb R}^{n}\mid x_{1}<0\}\) is connected. For the first case, \(u\) is clearly expressed by (\ref{Possible Convex Solution Type 1}).
For the second case, it is clear that \(D\) consists of two connected components \(D_{1}=H_{e_{1},\,0}^{+}\) and \(D_{2}=H_{e_{1},\,0}^{-}\). Again by the condition \ref{Condition affine on connected comp} and similar arguments to the above, we can determine \(u|_{D_{2}}\) as \(u(x)=\langle -t_{2}e_{1}\mid x\rangle\) for all \(x\in D_{2}\). Here \(t_{2}\in (0,\,\infty)\) is a constant. Hence we obtain (\ref{Possible Convex Solution Type 2}).
For the case \(H_{e_{1},\,0}\subsetneq \partial F\), we take and fix \(z_{0}\in \partial F\setminus H_{e_{1},\,0}\) and a supporting hyperplane for \(F\) at \(z_{0}\), which we write by \(H_{c^{\prime},\,z_{0}}\). Let \(D_{2}\) be the connected component of \(D\) which contains \(H_{c^{\prime},\,z_{0}}^{+}\subset D\). By the assumption \ref{Condition affine on connected comp} and \(u(z_{0})=0\), it follows that there exists \(c^{\prime\prime}\in {\mathbb R}^{n}\setminus\{0\}\) such that \(u(x)=\mleft\langle c^{\prime\prime}\mathrel{}\middle|\mathrel{} x-z_{0}\mright\rangle\) for all \(x\in D_{2}\). Completely similarly to the arguments above for showing that \(H_{c,\,0}=H_{e_{1},\,0}\), we can easily notice that \(H_{c^{\prime\prime},\,z_{0}}=H_{c^{\prime},\,z_{0}}\) and hence \(c^{\prime\prime}=t_{1}^{\prime}c^{\prime}\) for some constant \(t_{1}^{\prime}\in(0,\,\infty)\).
Moreover, we also realize that \(c^{\prime}=t_{\ast}e_{1}\) for some \(t_{\ast}\in{\mathbb R}\setminus\{0\}\). Otherwise it follows that the two hyperplanes \(H_{e_{1},\,0}\) and \(H_{c^{\prime},\,z_{0}}\) cross, and hence we get \(D_{1}=D_{2}\) and \(H_{e_{1},\,0}^{+}\cap H_{c^{\prime},\,z_{0}}^{-}\not= \emptyset\), which implies that there exists a point \(x_{0}\in D\) such that \(u(x_{0})<0\). This is clearly a contradiction.
This result and convexity of \(u\) imply that \(D\) consists of two connected components \(D_{1}=H_{e_{1},\,0}^{+}\) and \(D_{2}=H_{-e_{1},\,z_{0}}^{+}\), and that \(F=\{x\in{\mathbb R}^{n}\mid -l_{0}\le x_{1}\le 0\}\). Here \(l_{0}\coloneqq \dist(H_{e_{1},\,0},\,H_{-e_{1},\,z_{0}})>0\).
Finally we obtain the last possible expression (\ref{Possible Convex Solution Type 3}).
\(u\) can be expressed by either of (\ref{Possible Convex Solution Type 1})--(\ref{Possible Convex Solution Type 3}).
\end{proof}
Now we give the proof of Theorem \ref{Liouville theorem}.
\begin{proof}
Assume by contradiction that $F$, the facet of $u$, would satisfy $\emptyset\subsetneq F\subsetneq {\mathbb R}^{n}$. Without loss of generality, we may assume that $u$ attains its minimum $0$. By the strong maximum principle (Theorem \ref{strong maximum principle}), the convex weak solution $u$ is affine in each connected component of $D\coloneqq {\mathbb R}^{n}\setminus F$. Therefore we are able to apply Lemma \ref{determination of the shape of the facet}. By rotation and translation, $u$ can be expressed as (\ref{Possible Convex Solution Type 1})--(\ref{Possible Convex Solution Type 3}).
Now we prove that \(u\) is no longer a weak solution to \(L_{b,\,p}u=0\) in \({\mathbb R}^{n}\).
We set open cubes \(Q^{\prime}\coloneqq (-1,\,1)^{n-1}\subset{\mathbb R}^{n-1}\) and \(Q\coloneqq (-d,\,d) \times Q^{\prime}\subset {\mathbb R}^{n}\), where \(d>0\) is to be chosen later. We claim that \(u\) does not satisfy \(L_{b,\,p}u=0\) in \(W^{-1,\,p^{\prime}}(Q)\). Assume by contradiction that there exists a vector field \(Z\in L^{\infty}(Q,\,{\mathbb R}^{n})\) such that the pair \((u,\,Z)\in W^{1,\,p}(Q)\times L^{\infty}(Q,\,{\mathbb R}^{n})\) satisfies $L_{b,\,p}u=0$ in $W^{-1,\,p^{\prime}}(Q)$.

For the first case (\ref{Possible Convex Solution Type 1}), we have
\begin{equation}\label{subgradient condition for type 1}
\lvert Z(x)\rvert\le 1\quad\textrm{for a.e. }x\in Q,\,\quad \textrm{and } Z(x)=e_{1}\quad\textrm{for a.e. }x\in Q_{r}\coloneqq (0,\,d)\times Q^{\prime}\subset{\mathbb R}^{n}.
\end{equation}
by definition of \(Z\). We also set another open cube \(Q_{l}\coloneqq (-d,\,0)\times Q^{\prime}\subset{\mathbb R}^{n}\).
We take and fix non-negative functions \(\phi_{1}\in C_{c}^{1}((-d,\,d)),\,\phi_{2}\in C_{c}^{1}\mleft(Q^{\prime}\mright)\) such that
\begin{equation}\label{test function in 2 variables}
\phi_{1}^{\prime}\ge 0 \textrm{ in }(-d,\,0),\,\max\limits_{(-d,\,d)}\phi_{1}=\phi_{1}(0)>0, \textrm{ and }\phi_{2}\not\equiv 0.
\end{equation}
We define an admissible test function \(\phi\in C_{c}^{1}(Q)\) by \(\phi(x_{1},\,x^{\prime})\coloneqq \phi_{1}(x_{1})\phi_{2}(x^{\prime})\) for \((x_{1},\,x^{\prime})\in (-d,\,d) \times Q^{\prime}=Q\). Test \(\phi\in C_{c}^{1}(Q)\) into \(L_{b,\,p}u=0\) in $W^{-1,\,p^{\prime}}(Q)$, and divide the integration over $Q$ into that over $Q_{l}$ and $Q_{r}$. Then (\ref{subgradient condition for type 1}) implies that
\begin{align*}
0&=b\int_{Q_{l}}\langle Z+\lvert 0\rvert^{p-2}0\mid \nabla(\phi_{1}\phi_{2})\rangle\,dx+\int_{Q_{r}}\mleft\langle (b+t_{1}^{p-1})e_{1}\mathrel{}\middle| \mathrel{} \nabla(\phi_{1}\phi_{2})\mright\rangle\,dx\\&\le b\int_{Q_{l}}\phi_{1}^{\prime}\phi_{2}\,dx+b\int_{Q_{l}}  \phi_{1}\lvert\nabla\phi_{2}\rvert \,dx\\&\quad +b\phi_{1}(0)\int_{Q^{\prime}}\phi_{2}(x^{\prime})\langle e_{1}\mid -e_{1}\rangle\,dx^{\prime}+t_{1}^{p-1}\phi_{1}(0)\int_{Q^{\prime}}\phi_{2}(x^{\prime})\langle e_{1}\mid-e_{1}\rangle\,dx^{\prime}\\&\eqqcolon I_{1}+I_{2}+I_{3}+I_{4}.
\end{align*}
Here we have applied the Gauss--Green theorem to the integration over $Q_{r}$, and the Cauchy--Schwarz inequality to the integration over $Q_{l}$. 
For the integrations $I_{1}$ and $I_{2}$, we make use of Fubini's theorem and (\ref{test function in 2 variables}). Then we have
\[I_{1}=\int_{Q^{\prime}}\left(\int_{-d}^{0}\phi_{1}^{\prime}(x_{1})dx_{1}\right)\phi_{2}(x^{\prime})dx^{\prime}=b\phi_{1}(0)\int_{Q^{\prime}}\phi_{2}(x^{\prime})dx^{\prime}=b\phi_{1}(0)\lVert \phi_{2}\rVert_{L^{1}(Q^{\prime})}=-I_{3},\]
\[I_{2}\le b\phi_{1}(0)\int_{-d}^{0}\,dx_{1}\int_{Q^{\prime}}\lvert\nabla \phi_{2}(x^{\prime})\rvert\,dx^{\prime}=bd\phi_{1}(0)\lVert \nabla\phi_{2}\rVert_{L^{1}(Q^{\prime})}.\]
Finally we obtain
\begin{equation}\label{an estimate for Type 1}
0\le I_{1}+I_{2}+I_{3}+I_{4}\le I_{2}+I_{4}\le \phi_{1}(0)\left(bd\lVert\nabla\phi_{2}\rVert_{L^{1}(Q^{\prime})}-t_{1}^{p-1}\lVert \phi_{2}\rVert_{L^{1}(Q^{\prime})}\right).
\end{equation}
From (\ref{an estimate for Type 1}), we can easily deduce a contradiction by choosing sufficiently small \(d=d(b,\,p,\,t_{1},\,\phi_{2})>0\).
Similarly we can prove that $u$ defined as in (\ref{Possible Convex Solution Type 3}) does not satisfy $L_{b,\,p}u=0$ in $W^{-1,\,p^{\prime}}(Q)$, since it suffices to restrict \(d<l_{0}\). We consider the remaining case (\ref{Possible Convex Solution Type 2}). We have
\[Z(x)=\left\{\begin{array}{cc}
e_{1} & \textrm{for a.e. }x\in Q_{r},\\
-e_{1} & \textrm{for a.e. }x\in Q_{l}.
\end{array} \right.\]
by definition of \(Z\).
We test the same function \(\phi\in C_{c}^{1}(Q)\) in \(L_{b,\,p}u=0\), then it follows that
\begin{align*}
0&=\int_{Q_{l}}\mleft\langle -(b+t_{2}^{p-1})e_{1}\mathrel{}\middle| \mathrel{} \nabla(\phi_{1}\phi_{2})\mright\rangle\,dx+\int_{Q_{r}}\mleft\langle (b+t_{1}^{p-1})e_{1}\mathrel{}\middle| \mathrel{} \nabla(\phi_{1}\phi_{2})\mright\rangle\,dx\\&=-(b+t_{2}^{p-1})\int_{Q^{\prime}}\phi_{1}(0)\phi_{2}(x^{\prime})\mleft\langle e_{1}\mathrel{}\middle| \mathrel{} e_{1}\mright\rangle\,dx^{\prime}+(b+t_{1}^{p-1})\int_{Q^{\prime}}\phi_{1}(0)\phi_{2}(x^{\prime})\mleft\langle e_{1}\mathrel{}\middle| \mathrel{} -e_{1}\mright\rangle\,dx^{\prime}\\&=-\phi_{1}(0)\mleft(2b+t_{1}^{p-1}+\lvert t_{2}\rvert^{p-1}\mright)\int_{Q^{\prime}}\phi_{2}(x^{\prime})\,dx^{\prime}<0,
\end{align*}
which is a contradiction. This completes the proof.
\end{proof}
\begin{remark}\upshape
The estimate (\ref{an estimate for Type 1}) breaks for $p=1$, since the equation $\lvert 0\rvert^{p-2}0=0$ is no longer valid for $p=1$. This means that we have implicitly used differentiability of the function $\lvert z\rvert^{p}/p$ at $0\in{\mathbb R}^{n}$. Also it should be noted that for the one-variable case, functions as in (\ref{Possible Convex Solution Type 1}), which are in general not in \(C^{1}\), are one-harmonic in ${\mathbb R}$.
\end{remark}
\subsection{\(C^{1}\)-regularity theorem}
We give the proof of Theorem \ref{C1 regularity}.
\begin{proof}
We may assume that \(\Omega\) is convex.
By \cite[Theorem 25.1 and 25.5]{MR1451876} and Lemma \ref{regularity at regular point}, it suffices to show that \(\partial u(x_{0})=\{0\}\) for all \(x_{0}\in F\). Let \(x_{0}\in F\).
We get a convex function \(u_{0}\colon {\mathbb R}^{n}\rightarrow {\mathbb R}\) as a blow-up limit as in Proposition \ref{limit of rescaled solutions}.
We note that the facet of \(u_{0}\) is non-empty by Proposition \ref{limit of rescaled solutions}.
Hence by the Liouville-type theorem (Theorem \ref{Liouville theorem}), $u_{0}$ is constant and we obtain \(\partial u_{0}(x_{0})=\{0\}\).
Combining these results, we have \(\{0\}\subset \partial u(x_{0})\subset \partial u_{0}(x_{0})=\{0\}\) and therefore \(\partial u(x_{0})=\{0\}\). This completes the proof.
\end{proof}
\section{Generalization}\label{Sect Generalization}
In Section \ref{Sect Generalization}, we would like to discuss $C^{1}$-regularity of convex weak solutions to
\begin{equation}\label{generalized eq}
Lu\coloneqq -\divx(\nabla_{z}\Psi(\nabla u))-\divx(\nabla_{z}W(\nabla u))=f\quad\textrm{in}\quad \Omega\subset{\mathbb R}^{n},
\end{equation}
which covers (\ref{crystal model eq again}).
Precisely speaking, throughout Section \ref{Sect Generalization}, we make these following assumptions for $\Psi$ and $W$ on regularity and ellipticity.
For regularity, we only require
\begin{equation}\label{regularity assumptions}
\Psi\in C({\mathbb R}^{n})\cap C^{2}({\mathbb R}^{n}\setminus\{ 0\}),\,W\in C^{1}({\mathbb R}^{n})\cap C^{2}({\mathbb R}^{n}\setminus\{ 0\}).
\end{equation}
For $W$, we assume that for each fixed \(0<\mu\le M<\infty\), there exist constants \(0<\gamma<\Gamma<\infty\) such that \(W\) satisfies
\begin{equation}\label{ellipticity on W}
\gamma \lvert \zeta\rvert^{2} \le \mleft\langle \nabla_{z}^{2}W(z_{0})\zeta\mathrel{}\middle|\mathrel{}\zeta \mright\rangle,
\end{equation}
\begin{equation}\label{bound on W}
\mleft\lvert \mleft\langle \nabla_{z}^{2}W(z_{0})\zeta\mathrel{} \middle|\mathrel{} \omega \mright\rangle \mright\rvert \le \Gamma \lvert \zeta\rvert\lvert \omega\rvert
\end{equation}
for all \(z_{0},\,\zeta,\,\omega\in{\mathbb R}^{n}\) with \(\mu\le \lvert z_{0}\rvert\le M\). Also, there is no loss of generality in assuming that
\begin{equation}\label{gradW at 0}
\nabla_{z}W(0)=0.
\end{equation}
Finally, we assume that \(\Psi\) is positively homogeneous of degree $1$. In other words, \(\Psi\) satisfies
\begin{equation}\label{hom of deg 1}
\Psi(\lambda z_{0})=\lambda\Psi(z_{0})
\end{equation}
holds for all \(z_{0}\in{\mathbb R}^{n}\) and \(\lambda>0\). This clearly yields \(\Psi(0)=0\).

By modifying some of our arguments, we are able to show that
\begin{theorem}[$C^{1}$-regularity theorem for general equations]\label{C1 theorem generalized}
Let \(\Omega\subset{\mathbb R}^{n}\) be a domain. Assume that \(f\in L_{\mathrm{loc}}^{q}(\Omega)\,(n<q\le\infty)\) and the functionals \(\Psi\) and \(W\) satisfy (\ref{regularity assumptions})--(\ref{gradW at 0}). If \(u\) is a convex weak solution to (\ref{generalized eq}), then \(u\) is in \(C^{1}(\Omega)\).
\end{theorem}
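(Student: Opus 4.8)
The plan is to run the proof of Theorem~\ref{C1 regularity} with $E=bE_{1}+E_{p}$ replaced by $\Psi+W$, isolating the few places where the explicit form of $b\lvert z\rvert+\lvert z\rvert^{p}/p$ was really used. Since the statement is local, we may take $\Omega$ to be a convex ball, and by the $C^{1}$-criterion for convex functions (Remark~\ref{Some properties on convex function}) it suffices to show that $\partial u(x_{0})$ is a singleton for every $x_{0}\in\Omega$; we split into $x_{0}\in D$ and $x_{0}\in F$, where $F=\{x\mid 0\in\partial u(x)\}$ and $D=\Omega\setminus F$. For $x_{0}\in D$ the proof of Lemma~\ref{regularity at regular point} carries over unchanged. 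The separation step---a ball $B_{r_{0}}(x_{0})\subset D$ and a direction $\nu_{0}$ with $\langle\nabla u\mid\nu_{0}\rangle\ge\mu_{0}>0$ a.e.\ there---rests only on convexity (Lemma~\ref{coercivity lemma for convex function}). On that ball $\mu_{0}\le\lvert\nabla u\rvert\le M_{0}$, so $\nabla_{z}\Psi$ and $\nabla_{z}W$ are $C^{1}$ near the relevant values by (\ref{regularity assumptions}), and $\nabla_{z}^{2}\Psi(z_{0})+\nabla_{z}^{2}W(z_{0})$ is uniformly elliptic for $\mu_{0}\le\lvert z_{0}\rvert\le M_{0}$: the lower bound $\gamma\lvert\zeta\rvert^{2}$ comes from (\ref{ellipticity on W}) together with $\nabla_{z}^{2}\Psi\ge 0$ (convexity of $\Psi$), and the upper bound from (\ref{bound on W}) together with the fact that, by (\ref{hom of deg 1}), $\nabla_{z}^{2}\Psi$ is positively homogeneous of degree $-1$, hence bounded away from the origin by (\ref{regularity assumptions}). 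The difference-quotient method then gives $u\in W_{\mathrm{loc}}^{2,2}(B_{r_{0}}(x_{0}))$, differentiating (\ref{generalized eq}) yields a uniformly elliptic divergence-form equation for each $\partial_{x_{j}}u$, and De~Giorgi--Nash--Moser theory gives local $C^{1,\alpha}$-regularity; thus $\partial u(x_{0})=\{\nabla u(x_{0})\}\neq\{0\}$ for all $x_{0}\in D$.

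For $x_{0}\in F$ we first repeat the blow-up of Proposition~\ref{limit of rescaled solutions}. With $u_{a}(x)=(u(a(x-x_{0})+x_{0})-u(x_{0}))/a$ and $Z_{a}(x)=Z(a(x-x_{0})+x_{0})$, the pair $(u_{a},Z_{a})$ solves $Lu_{a}=f_{a}$, $f_{a}(x)=af(a(x-x_{0})+x_{0})$, exactly as for $L_{b,p}$---the inhomogeneity of $W$ causes no trouble because $\divx_{x}(G(\nabla u_{a}))$ always produces precisely one factor of $a$, for any vector field $G$. Along a subsequence $u_{a}\to u_{0}$ locally uniformly for a convex $u_{0}\colon{\mathbb R}^{n}\to{\mathbb R}$ with $\partial u(x_{0})\subset\partial u_{0}(x_{0})$, and the limit is a weak solution of $Lu_{0}=0$ in ${\mathbb R}^{n}$. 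The only genuinely new point is that $\nabla_{z}\Psi(\nabla u)$ in the weak formulation must be read as a selection $Z\in\partial\Psi(\nabla u)$ with $Z\in L^{\infty}$, and it is exactly the degree-one homogeneity (\ref{hom of deg 1}) that keeps this selection uniformly bounded: $\nabla_{z}\Psi$ is $0$-homogeneous and every $\partial\Psi(z)$ lies in the fixed bounded convex set $\partial\Psi(0)$, so $\{Z_{a}\}$ is bounded in $L^{\infty}$, has a weak-$*$ limit $Z_{0}$, and $Z_{0}\in\partial\Psi(\nabla u_{0})$ a.e.\ follows from the a.e.\ convergence of gradients (Lemma~\ref{a.e. pointwise convergence for convex functions}) just as for $\partial\lvert\,\cdot\,\rvert$. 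The $W$-term converges strongly in $L^{p^{\prime}}$ by dominated convergence, using $\lvert\nabla u_{a}\rvert\le M$ and continuity of $\nabla_{z}W$, and $f_{a}\to 0$ in $W^{-1,p^{\prime}}$ as before.

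Next one transfers the strong maximum principle (Theorem~\ref{strong maximum principle}) to $Lu_{0}=0$. The comparison principle (Proposition~\ref{comparison principle}) holds once the strict monotonicity of $z\mapsto\lvert z\rvert^{p-2}z$ is replaced by that of $\nabla_{z}W$---a consequence of (\ref{ellipticity on W})---while keeping the monotonicity of $\partial\Psi$. For the Hopf-type barrier (Lemma~\ref{Hopf Lemma}) one keeps the explicit $h(x)=e^{-\alpha\lvert x-x_{\ast}\rvert^{2}}-e^{-\alpha R^{2}}$ (or the alternative $\lvert x-x_{\ast}\rvert^{-\alpha}-R^{-\alpha}$): forcing $\lvert\nabla h\rvert\le\lvert c\rvert/2$ keeps $\lvert\nabla v\rvert$ comparable to $\lvert c\rvert>0$ on $E_{R}(x_{\ast})$, so $\nabla_{z}^{2}\Psi(\nabla v)+\nabla_{z}^{2}W(\nabla v)$ has eigenvalues in $[\lambda,\Lambda]$ with $\lambda=\gamma$ and $\Lambda$ from (\ref{bound on W}) and the degree-$(-1)$ bound on $\nabla_{z}^{2}\Psi$, and the Pucci-operator estimate (\ref{eigenvalue calculation}) is word for word the same. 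Hence $u_{0}$ is affine on each connected component of $D_{0}={\mathbb R}^{n}\setminus F_{0}$. One then proves the Liouville-type theorem (Theorem~\ref{Liouville theorem}) for $Lu_{0}=0$: Lemma~\ref{determination of the shape of the facet} is pure convex analysis and is untouched, so if $\emptyset\subsetneq F_{0}\subsetneq{\mathbb R}^{n}$ then, after a rotation and a translation, $u_{0}$ is one of the three piecewise-linear functions (\ref{Possible Convex Solution Type 1})--(\ref{Possible Convex Solution Type 3}), and one must show none of these solves $Lu_{0}=0$ weakly. Testing a separated $\phi(x_{1},x^{\prime})=\phi_{1}(x_{1})\phi_{2}(x^{\prime})$ with $\phi_{1},\phi_{2}\ge 0$ and $\phi_{1}^{\prime}\ge 0$ on $(-d,0)$ over a thin cube $Q=(-d,d)\times Q^{\prime}$: on the facet side $Q_{l}$ the $W$-flux vanishes by $\nabla_{z}W(0)=0$ in (\ref{gradW at 0}), and the $\Psi$-flux is bounded by $\Psi(e_{1})\phi_{1}(0)\lVert\phi_{2}\rVert_{L^{1}(Q^{\prime})}$ plus a term of order $d$, using that the $e_{1}$-component of any $Z\in\partial\Psi(0)$ is at most the support value $\Psi(e_{1})$; on the gradient side $Q_{r}$ the $\Psi$-flux equals $-\Psi(e_{1})\phi_{1}(0)\lVert\phi_{2}\rVert_{L^{1}(Q^{\prime})}$ by Euler's identity $\langle\nabla_{z}\Psi(z)\mid z\rangle=\Psi(z)$, cancelling the $\Psi$-part from $Q_{l}$, and the residual $W$-flux is $-\langle\nabla_{z}W(t_{1}e_{1})\mid e_{1}\rangle\phi_{1}(0)\lVert\phi_{2}\rVert_{L^{1}(Q^{\prime})}$ with $\langle\nabla_{z}W(t_{1}e_{1})\mid e_{1}\rangle>0$ by (\ref{ellipticity on W}) and (\ref{gradW at 0}); choosing $d$ small yields a contradiction, while for the ``V''-shaped case (\ref{Possible Convex Solution Type 2}) both fluxes already carry a definite sign. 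Therefore $F_{0}\in\{\emptyset,{\mathbb R}^{n}\}$; when $x_{0}\in F$ we have $0\in\partial u(x_{0})\subset\partial u_{0}(x_{0})$, so $F_{0}\neq\emptyset$, whence $u_{0}$ is constant and $\partial u(x_{0})\subset\partial u_{0}(x_{0})=\{0\}$. Together with the case $x_{0}\in D$ this shows that $\partial u$ is single-valued, i.e.\ $u\in C^{1}(\Omega)$.

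The main obstacle is this last non-existence step: one has to track precisely how the $\Psi$-part---in which $\nabla_{z}\Psi(\nabla u)$ is merely a bounded subgradient selection whose normal component saturates a support-function bound on the side where $\nabla u\neq 0$---cancels against itself, so that what remains is a strictly positive net flux produced by the genuinely elliptic part $W$; this is exactly the place where (\ref{ellipticity on W}), (\ref{gradW at 0}) and the homogeneity (\ref{hom of deg 1}) are all used simultaneously, whereas the rest of the argument only asks for the separate adaptations indicated above. A secondary technical issue is to verify, in the blow-up, that (\ref{hom of deg 1}) indeed keeps the selections $Z_{a}$ uniformly bounded in $L^{\infty}$, so that a weak-$*$ limit compatible with $\partial\Psi(\nabla u_{0})$ can be extracted.
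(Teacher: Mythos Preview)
Your proposal is correct and follows essentially the same route as the paper's Section~\ref{Subsect Sketch}: local uniform ellipticity of $\Psi+W$ outside the facet for the $D$-case, the same blow-up/comparison/Hopf-barrier/Liouville chain for the $F$-case, and the non-existence of the piecewise-linear candidates via Euler's identity for $\Psi$ together with $\nabla_{z}W(0)=0$ and $\langle\nabla_{z}W(t_{1}e_{1})\mid e_{1}\rangle>0$. The one point you only flag but do not resolve---that the weak-$\ast$ limit $Z_{0}$ still takes values in $\partial\Psi(0)$ on the facet---is handled in the paper by proving lower semicontinuity of $\esssup\,\tilde\Psi(Z(\cdot))$ under weak-$\ast$ convergence (Lemma~\ref{Convergence lemma in L-infty}), which together with the characterization $\partial\Psi(z)=\{\zeta:\tilde\Psi(\zeta)\le 1,\ \langle z\mid\zeta\rangle=\Psi(z)\}$ closes the gap.
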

If we set
\[\Psi(z)\coloneqq b\lvert z\rvert,\quad W(z)\coloneqq \frac{\lvert z\rvert^{p}}{p},\quad\textrm{where }1<p<\infty,\]
then the equation (\ref{generalized eq}) becomes (\ref{crystal model eq again}). Therefore Theorem \ref{C1 theorem generalized} generalizes Theorem \ref{C1 regularity}.

\subsection{Preliminaries}\label{Subsect Some properties}
In Section \ref{Subsect Some properties}, we mention some basic properties of $\Psi$ and $W$, which are derived from the assumptions (\ref{regularity assumptions})--(\ref{gradW at 0}).

For $W$, by (\ref{regularity assumptions})--(\ref{ellipticity on W}) and (\ref{gradW at 0}) it is easy to check that the continuous mapping \(A\colon {\mathbb R}^{n}\ni z\mapsto \nabla W(z)\in {\mathbb R}^{n}\) satisfies strict monotonicity (\ref{strict monotonicity}).
In particular, by (\ref{gradW at 0}) we have
\begin{equation}\label{claim for gradW}
\langle A(z)\mid z \rangle>0\quad\textrm{for all }z\in{\mathbb R}^{n}\setminus\{ 0\}.
\end{equation}
For the proof, see Lemma \ref{lemma strict monotonicity} in the appendices.

For \(\Psi\), we first note that \(\Psi\) satisfies the triangle inequality
\begin{equation}\label{triangle ineq}
\Psi(z_{1}+z_{2})\le \Psi(z_{1})+\Psi(z_{2})\quad\textrm{for all }z_{1},\,z_{2}\in{\mathbb R}^{n}.
\end{equation}
We define a function \({\tilde \Psi}\colon {\mathbb R}^{n}\rightarrow [0,\,\infty]\) by
\[{\tilde \Psi}(\zeta)\coloneqq \sup\mleft\{ \langle \zeta\mid z\rangle \mathrel{} \middle| \mathrel{} z\in{\mathbb R}^{n},\,\Psi(z)\le 1 \mright\}.\]
\({\tilde \Psi}\) is the support function for the closed convex set \(C_{\Psi}\coloneqq\mleft\{ z \in{\mathbb R}^{n}\mathrel{} \middle|\mathrel{} \Psi(z)\le 1\mright\}\). By definition it is easy to check that \({\tilde \Psi}\) is convex and lower semicontinuous. Also, if \(\zeta\in{\mathbb R}^{n}\) satisfies \({\tilde \Psi}(\zeta)<\infty\), then the following Cauchy--Schwarz-type inequality holds;
\begin{equation}\label{CW ineq}
\langle z\mid \zeta \rangle\le \Psi(z){\tilde \Psi}(\zeta)\quad \textrm{for all}\quad z\in{\mathbb R}^{n}.
\end{equation}
If a convex function \(\Psi\) is positively homogeneous of degree $1$, then the subdifferential operator \(\partial \Psi\) is explicitly given by
\begin{equation}\label{subdifferential expression}
\partial \Psi(z)=\mleft\{ \zeta \in{\mathbb R}^{n}\mathrel{} \middle| \mathrel{} {\tilde \Psi}(\zeta)\le 1,\,\Psi(z)=\langle z\mid \zeta\rangle \mright\}
\end{equation}
for all \(z\in{\mathbb R}^{n}\). In particular, we have the following formula
\begin{equation}\label{Euler's id}
\mleft\langle \nabla_{z}\Psi(z_{0})\mathrel{}\middle|\mathrel{} z_{0}\mright\rangle=\Psi(z_{0})\quad\textrm{for all }z_{0}\in {\mathbb R}^{n}\setminus\{ 0\},
\end{equation}
which is often called Euler's identity.
Also, assumptions (\ref{regularity assumptions}) and (\ref{hom of deg 1}) imply that
\begin{equation}\label{gradient and Hessian for homogeneous}
\nabla \Psi (\lambda z_{0})=\nabla \Psi(z_{0}),\quad \nabla^{2} \Psi (\lambda z_{0})=\lambda^{-1}\nabla^{2}\Psi(z_{0})
\end{equation}
for all \(\lambda >0\) and \(z_{0}\in {\mathbb R}^{n}\setminus\{ 0\}\). 
Proofs of (\ref{triangle ineq})--(\ref{subdifferential expression}) are given in Lemma \ref{basic lemma of convex deg 1} of the appendices for the reader's convenience.
\begin{remark}\upshape\label{boundedness of subdifferential}
The results (\ref{Euler's id})--(\ref{gradient and Hessian for homogeneous}) give us the following basic property for $\Psi$.
\begin{enumerate}
\item We set a constant \[K\coloneqq\sup\mleft\{\lvert \nabla_{z}\Psi(z_{0})\rvert\mathrel{}\middle|\mathrel{}z_{0}\in{\mathbb R}^{n},\,\lvert z_{0}\rvert=1\mright\},\]
which is finite. Then we have \(\partial\Psi(z_{0})\subset \overline{B_{K}(0)}\) for all \(z_{0}\in{\mathbb R}^{n}\). For the case \(z_{0}\not= 0\), this inclusion is clear by (\ref{gradient and Hessian for homogeneous}) and \(\partial\Psi(z_{0})=\{\nabla_{z}\Psi(z_{0})\}\). For \(z_{0}=0\), we take arbitrary \(w\in \partial \Psi(0)\setminus\{ 0\}\). Then by the subgradient inequality, Euler's identity (\ref{Euler's id}) and the Cauchy--Schwarz inequality, we have
\begin{align*}
\lvert w\rvert^{2}&=\langle w\mid w-0 \rangle+\Psi(0)\\&\le \Psi(w)=\mleft\langle \nabla_{z}\Psi(w)\mathrel{}\middle|\mathrel{} w\mright\rangle\le K\lvert w\rvert.
\end{align*}
This estimate yields the inclusion \(\partial\Psi(0)\subset \overline{B_{K}(0)}\). 
\item For $z_{0}\in{\mathbb R}^{n}\setminus\{ 0\}$, the Hessian matrix $\nabla_{z}^{2}\Psi(z_{0})$ satisfies 
\begin{equation}\label{degenerate ellipticity for 1 degree}
0\le \mleft\langle \nabla_{z}^{2}\Psi(z_{0})\zeta\mathrel{}\middle| \mathrel{}\zeta \mright\rangle,
\end{equation}
\begin{equation}\label{bounds for 1 degree}
\mleft\lvert\mleft\langle \nabla_{z}^{2}\Psi(z_{0})\zeta\mathrel{}\middle| \mathrel{}\omega \mright\rangle\mright\rvert\le \frac{C}{\lvert z_{0}\rvert}\lvert \zeta\rvert\lvert \omega\rvert
\end{equation}
for all \(\zeta,\,\omega\in{\mathbb R}^{n}\). Here the finite constant \(C\) is explicitly given by
\[C\coloneqq \sup\mleft\{\mleft\lvert\mleft\langle \nabla_{z}^{2}\Psi(w)\zeta\mathrel{}\middle| \mathrel{}\omega \mright\rangle\mright\rvert \mathrel{}\middle|\mathrel{}z,\,\zeta,\,\omega\in {\mathbb R}^{n},\,\lvert w\rvert=\lvert \eta\rvert=\lvert \omega\rvert=1\mright\}.\]
\end{enumerate}
\end{remark}
Lemma \ref{Convergence lemma in L-infty} states lower semicontinuity of a functional in the weak$^{\ast}$ topology of an \(L^{\infty}\)-space. This result is used in the justification of a blow-up argument for the equation (\ref{generalized eq}).
\begin{lemma}\label{Convergence lemma in L-infty}
Let \(\Omega\subset {\mathbb R}^{m}\) be a Lebesgue measurable set, and let \(\Psi\colon {\mathbb R}^{n}\rightarrow [0,\,\infty)\) be a convex function which satisfies (\ref{hom of deg 1}).
Assume that a vector field \(Z\in L^{\infty}(\Omega,\,{\mathbb R}^{n})\) and a sequence \(\{Z_{N}\}_{N}\subset L^{\infty}(\Omega,\,{\mathbb R}^{n})\) satisfy \(Z_{N}\overset{\ast}{\rightharpoonup} Z\) in \(L^{\infty}(\Omega,\,{\mathbb R}^{n})\).
Then we have 
\begin{equation}\label{weakstar lower semicontinuity lemma}
\esssup\limits_{x\in\Omega} {\tilde \Psi}(Z(x))\le \liminf_{N\to\infty}\esssup\limits_{x\in\Omega}{\tilde \Psi}(Z_{N}(x)),
\end{equation}
where \({\tilde \Psi}\) denotes the support function of the closed convex set \(C_{\Psi}\coloneqq \mleft\{ z\in{\mathbb R}^{n}\mathrel{} \middle|\mathrel{} \Psi(z)\le 1\mright\}\).
\end{lemma}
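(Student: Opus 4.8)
The plan is to reduce everything to the standard fact that a pointwise a.e.\ membership in a fixed closed convex subset of ${\mathbb R}^n$ is preserved under weak$^\ast$ convergence in $L^\infty$. Write $L\coloneqq\liminf_{N\to\infty}\esssup_{x\in\Omega}{\tilde\Psi}(Z_N(x))$. If $L=\infty$ there is nothing to prove, so assume $L<\infty$. It will then suffice to show that for each fixed $\varepsilon>0$ one has ${\tilde\Psi}(Z(x))\le L+\varepsilon$ for a.e.\ $x\in\Omega$; sending $\varepsilon\downarrow0$ afterwards yields (\ref{weakstar lower semicontinuity lemma}).

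First I would pass to a subsequence (not relabelled) along which $\esssup_\Omega{\tilde\Psi}(Z_N)\to L$, noting that $Z_N\overset{\ast}{\rightharpoonup}Z$ is inherited. Fix $\varepsilon>0$; then for all sufficiently large $N$ we have ${\tilde\Psi}(Z_N(x))\le L+\varepsilon$, i.e.\ $Z_N(x)\in K$, for a.e.\ $x\in\Omega$, where
\[K\coloneqq\mleft\{w\in{\mathbb R}^n\mathrel{}\middle|\mathrel{}{\tilde\Psi}(w)\le L+\varepsilon\mright\}.\]
Since ${\tilde\Psi}$ is a support function it is the supremum of the linear functionals $\zeta\mapsto\langle\zeta\mid z\rangle$ over $z\in C_\Psi$, hence convex and lower semicontinuous, so $K$ is a nonempty closed convex set. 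What remains is to transfer the constraint $Z_N\in K$ a.e.\ to the weak$^\ast$ limit $Z$.

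For this I would localize to balls: fix $R>0$ and set $E\coloneqq\Omega\cap B_R(0)$, which has finite measure. Since every $\psi\in L^\infty(E,{\mathbb R}^n)$ (extended by zero) lies in $L^1(\Omega,{\mathbb R}^n)$, the weak$^\ast$ convergence gives $Z_N\rightharpoonup Z$ weakly in $L^1(E,{\mathbb R}^n)$. By Mazur's lemma there are finite convex combinations $Y_k=\sum_{j\ge k}\lambda_j^{(k)}Z_j$, with $\lambda_j^{(k)}\ge0$ and $\sum_j\lambda_j^{(k)}=1$, converging to $Z$ strongly in $L^1(E,{\mathbb R}^n)$, hence a.e.\ on $E$ along a further subsequence. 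For $k$ large, every $Z_j$ appearing in $Y_k$ satisfies $Z_j(x)\in K$ a.e., so convexity of $K$ forces $Y_k(x)\in K$ for a.e.\ $x\in E$, and then closedness of $K$ gives $Z(x)\in K$ for a.e.\ $x\in E$. Letting $R\to\infty$ we obtain $Z(x)\in K$, i.e.\ ${\tilde\Psi}(Z(x))\le L+\varepsilon$, for a.e.\ $x\in\Omega$, whence $\esssup_\Omega{\tilde\Psi}(Z)\le L+\varepsilon$, and the proof finishes by letting $\varepsilon\downarrow0$.

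Most of the argument is routine bookkeeping; the one step carrying real content is the passage of the pointwise inclusion $Z_N\in K$ through the weak$^\ast$ limit, which hinges entirely on $K$ being \emph{closed and convex}, equivalently on lower semicontinuity and convexity of the support function ${\tilde\Psi}$, and is delivered by Mazur's lemma. Equivalently, one could invoke weak $L^1$-lower semicontinuity of the convex integral functional $Y\mapsto\int_E\dist(Y(x),K)\,dx$ applied to $Z_N\rightharpoonup Z$ in $L^1(E,{\mathbb R}^n)$; either route works, and no feature of $\Psi$ beyond the support-function structure of ${\tilde\Psi}$ is needed.
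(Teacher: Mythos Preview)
Your proof is correct but takes a different route from the paper. The paper works directly with the definition of ${\tilde\Psi}$ as a supremum over $C_\Psi$: for each fixed $w\in C_\Psi$ the pointwise inequality $\langle Z_{N_j}(x)\mid w\rangle\le L+\varepsilon$ is linear in $Z_{N_j}$ and therefore passes through the weak$^\ast$ limit immediately (after pairing with an arbitrary nonnegative $\phi\in L^1(\Omega)$); a separability argument for $C_\Psi$ then upgrades the resulting family of a.e.\ inequalities to a single one valid for all $w\in C_\Psi$ simultaneously, which is exactly ${\tilde\Psi}(Z(x))\le L+\varepsilon$. You instead package the constraint as a.e.\ membership in the closed convex sublevel set $K=\{{\tilde\Psi}\le L+\varepsilon\}$, localize to finite-measure pieces so that weak$^\ast$ convergence in $L^\infty$ yields weak convergence in $L^1$, and invoke Mazur's lemma to obtain strongly (hence a.e.) convergent convex combinations that inherit the constraint. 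Both arguments rest on convexity, but in dual form: the paper uses that ${\tilde\Psi}$ is a supremum of linear functions, you use that its sublevel sets are closed and convex. Your approach costs a little more machinery (Mazur plus the localization in $R$) but has the advantage of applying verbatim with ${\tilde\Psi}$ replaced by any lower semicontinuous convex function, whereas the paper's argument leans explicitly on the support-function representation.
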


We give an elementary proof of Lemma \ref{Convergence lemma in L-infty}, which is based on a definition of \({\tilde \Psi}\).

\begin{proof}
We consider the case \(C_{\infty}\coloneqq \liminf\limits_{N\to\infty}\,\mleft\lVert {\tilde \Psi}(Z_{N})\mright\rVert_{L^{\infty}(\Omega)}<\infty\), since otherwise (\ref{weakstar lower semicontinuity lemma}) is clear.
Fix arbitrary \(\varepsilon>0\). Then we may take a subsequence \(\{ Z_{N_{j}} \}_{j=1}^{\infty}\) such that 
\begin{equation}\label{bound on Psi}
\esssup\limits_{x\in\Omega}{\tilde \Psi}(Z_{N_{j}} (x))\le C_{\infty}+\varepsilon<\infty.
\end{equation}
Take arbitrary \(0\le \phi\in L^{1}(\Omega)\) and \(w\in C_{\Psi}\). 
Then with the aid of (\ref{CW ineq}), we have
\[\langle Z_{N_{j}}(x)\mid w\rangle\le C_{\infty}+\varepsilon\]
for all \(j\in{\mathbb N}\) and for a.e. \(x\in\Omega\), which yields
\begin{equation}\label{a claim; positivity of functional}
\int_{\Omega} \mleft[C_{\infty}+\varepsilon-\langle Z_{N_{j}}(x) \mid w\rangle \mright]\phi(x)\,dx\ge 0
\end{equation}
for all $j\in{\mathbb N}$. Letting \(j\to\infty\), we have
\[\int_{\Omega} \mleft[C_{\infty}+\varepsilon-\langle Z(x) \mid w\rangle \mright]\phi(x)\,dx\ge 0\]
by \(Z_{N_{j}}\overset{\ast}{\rightharpoonup} Z\) in \(L^{\infty}(\Omega,\,{\mathbb R}^{n})\).
Since \(0\le \phi\in L^{1}(\Omega)\) is arbitrary, for each \(w\in C_{\Psi}\), there exists an ${\mathcal L}^{n}$-measurable set \(U_{w}\subset \Omega\), such that \({\mathcal L}^{n}(U_{w})=0\) and
\[\langle Z(x)\mid w\rangle \le C_{\infty}+\varepsilon \quad\textrm{for all }x\in \Omega\setminus U_{w}.\]
Here we denote \({\mathcal L}^{n}\) by the \(n\)-dimensional Lebesgue measure.
Since \(C_{\Psi}\subset{\mathbb R}^{n}\) is separable, we may take a countable and dense set \(D\subset C_{\psi}\). We set an ${\mathcal L}^{n}$-measurable set
\[U\coloneqq \bigcup_{w\in D} U_{w}\subset \Omega,\]
which clearly satisfies ${\mathcal L}^{n}(U)=0$. Then we conclude that 
\[\langle Z(x)\mid w\rangle\le C_{\infty}+\varepsilon\quad \textrm{for all }x\in \Omega\setminus U,\,w\in C_{\Psi}\]
from density of \(D\subset C_{\Psi}\).
Hence by definition of \({\tilde \Psi}\), it is clear that
\[{\tilde \Psi}(Z(x))\le C_{\infty}+\varepsilon \quad\textrm{for a.e. }x\in\Omega.\]
Since \(\varepsilon>0\) is arbitrary, this completes the proof of (\ref{weakstar lower semicontinuity lemma}).
\end{proof}

\subsection{Sketches of the proofs}\label{Subsect Sketch}
We first give definitions of weak solutions to (\ref{generalized eq}). We also define weak subsolutions, and supersolutions to an equation $Lu=0$ in a bounded domain.
\begin{definition}\upshape\label{Def generalized weak sols}
Let \(\Omega\subset{\mathbb R}^{n}\) be a domain.
\begin{enumerate}
\item Let \(f\in L_{\mathrm{loc}}^{q}(\Omega)\,(n<q\le\infty)\). We say that a function \(u\in W_{\mathrm{loc}}^{1,\,\infty}(\Omega)\) is a \textrm{weak} solution to (\ref{generalized eq}), when for any bounded Lipschitz domain \(\omega\Subset\Omega\), there exists a vector field \(Z\in L^{\infty}(\omega,\,{\mathbb R}^{n})\) such that the pair \((u,\,Z)\in W^{1,\,\infty}(\omega)\times L^{\infty}(\omega,\,{\mathbb R}^{n})\) satisfies
\begin{equation}\label{general weak formulation}
\int_{\omega} \langle Z\mid \nabla\phi\rangle\,dx+\int_{\omega}\langle A(\nabla u)\mid\nabla \phi\rangle\,dx=\int_{\omega}f\phi\,dx
\end{equation}
for all \(\phi\in W_{0}^{1,\,1}(\omega)\), and
\begin{equation}\label{general subgradient}
Z(x)\in \partial \Psi(\nabla u(x))
\end{equation}
for a.e. \(x\in\omega\). Here \(A\) denotes the continuous mapping \(A\colon {\mathbb R}^{n}\ni x\mapsto \nabla_{z}W(x)\in{\mathbb R}^{n}\). For such pair \((u,\,Z)\), we say that $(u,\,Z)$ satisfies $Lu=f$ in $W^{-1,\,\infty}(\omega)$ or simply say that $u$ satisfies $Lu=f$ in $W^{-1,\,\infty}(\omega)$.
\item Assume that \(\Omega\) is bounded. A pair \((u,\,Z)\in W^{1,\,\infty}(\Omega)\times L^{\infty}(\Omega,\,{\mathbb R}^{n})\) is called a \textit{weak} subsolution to \(Lu=0\) in \(\Omega\), if it satisfies
\begin{equation}\label{weak subsolution generalized}
\int_{\Omega}\langle Z \mid \nabla\phi\rangle\,dx+\int_{\Omega}\mleft\langle A(\nabla u)\mathrel{}\middle|\mathrel{}\nabla\phi\mright\rangle\,dx\le 0
\end{equation}
for all $0\le\phi\in C_{c}^{\infty}(\Omega)$, and
\begin{equation}\label{gradient vector field generalized}
Z(x)\in\partial\Psi\mleft(\nabla u(x)\mright)\quad\textrm{for a.e. }x\in\Omega.
\end{equation}
Similarly we call a pair \((u,\,Z)\in W^{1,\,p}(\Omega)\times L^{\infty}(\Omega,\,{\mathbb R}^{n})\) a \textit{weak} supersolution \(L_{b,\,p}u=0\) in \(\Omega\), if it satisfies (\ref{gradient vector field generalized}) and 
\[\int_{\Omega}\langle Z \mid \nabla\phi\rangle\,dx+\int_{\Omega}\mleft\langle A(\nabla u)\mathrel{}\middle|\mathrel{}\nabla\phi\mright\rangle\,dx\ge 0\]
for all $0\le\phi\in C_{c}^{\infty}(\Omega)$.
For \(u\in W^{1,\,p}(\Omega)\), we simply say that \(u\) is respectively a subsolution and a supersolution to \(Lu=0\) in the \textit{weak} sense if there is \(Z\in L^{\infty}(\Omega,\,{\mathbb R}^{n})\) such that the pair \((u,\,Z)\) is a weak subsolution and a weak supersolution to \(Lu=0\) in \(\Omega\).
\end{enumerate}
\end{definition}
\begin{remark}\upshape\label{well-definedness on weak formulations}
We describe some remarks on Definition \ref{Def generalized weak sols}.
\begin{enumerate}
\item In this paper we treat a convex solution, which clearly satisfies local Lipschitz regularity. Hence it is not restrictive to assume local or global \(W^{1,\,\infty}\)-regularity for solutions in Definition \ref{Def generalized weak sols}. Also it should be noted that if a vector field \(Z\) satisfies (\ref{general subgradient}), then \(Z\) is in \(L^{\infty}\) by Remark \ref{boundedness of subdifferential}. Hence our regularity assumptions of the pair $(u,\,Z)$ involve no loss of generality.
\item Integrals in (\ref{general weak formulation}) make sense by \(Z,\,\nabla u\in L^{\infty}(\omega,\,{\mathbb R}^{n})\), \(A\in C({\mathbb R}^{n},\,{\mathbb R}^{n})\), and the continuous embedding \(W_{0}^{1,\,1}(\omega)\hookrightarrow L^{q^{\prime}}(\omega)\).
\item For a bounded domain \(\Omega\subset{\mathbb R}^{n}\), let \(u\in C^{2}(\overline{\Omega})\) satisfy
\[\nabla u(x) \not= 0\quad \textrm{for all }x\in\Omega,\textrm{ and }\]
\[Lu(x)\le 0 \quad \textrm{for all }x\in\Omega.\]
Then the pair \((u,\,\nabla_{z}\Psi(\nabla u))\in W^{1,\,p}(\Omega)\times L^{\infty}(\Omega,\,{\mathbb R}^{n})\) satisfies (\ref{weak subsolution generalized})--(\ref{gradient vector field generalized}).
For such \(u\), we simply say that \(u\) satisfies \(Lu\le 0\) in \(\Omega\) in the \textit{classical} sense.
\end{enumerate}
\end{remark}

To prove Theorem \ref{C1 theorem generalized}, we may assume that $\Omega$ is a bounded convex domain, since our argument is local. As described in Section \ref{Subsect Main theorems}, we would like to prove that a convex solution $u$ to (\ref{generalized eq}) satisfies (\ref{reduced question}) for all \(x\in \Omega\).

For the case \(x\in D\), we can show (\ref{reduced question}) by De Giorgi--Nash--Moser theory. This is basically due to the fact that the functional
\[E(z)\coloneqq \Psi(z)+W(z)\quad\textrm{for }z\in{\mathbb R}^{n}\]
satisfy the following property. For each fixed constants \(0<\mu\le M<\infty\), there exists constants \(0<\lambda\le\Lambda<\infty\) such that the estimates (\ref{uniformly elliptic on a ball in D})--(\ref{uniformly bound on a ball in D}) hold for all $z_{0},\,\zeta,\,\omega\in{\mathbb R}^{n}$ with $\mu\le \lvert z_{0}\rvert\le M$. In other words, the operator $L$ is \textit{locally uniformly elliptic outside a facet}, in the sense that for a function \(v\) the operator \(Lv\) becomes uniformly elliptic in a place where \(0<\mu\le \lvert \nabla v\rvert\le M<\infty\) holds. This ellipticity is an easy consequence of (\ref{ellipticity on W})--(\ref{bound on W}) and (\ref{degenerate ellipticity for 1 degree})--(\ref{bounds for 1 degree}). Appealing to local uniform ellipticity of the operator $L$ outside the facet and De Giorgi--Nash--Moser theory, we are able to show that a convex solution to $Lu=f$ is $C^{1,\,\alpha}$ near a neighborhood of each fixed point $x\in D$, similarly to the proof of Lemma \ref{regularity at regular point}.

For the case $x\in F$, we first make a blow-argument to construct a convex function \(u_{0}\colon{\mathbb R}^{n}\rightarrow {\mathbb R}\) satisfying \(\partial u(x)\subset \partial u_{0}(x)\), and \(Lu_{0}=0\) in \({\mathbb R}^{n}\) in the sense of Definition \ref{Def generalized weak sols}. Next we justify a maximum principle, which is described as in (\ref{strong comparison principle}), holds on each connected component of \(D\). This result enables us to apply Lemma \ref{determination of the shape of the facet}, and thus similarly in Section \ref{Subsect Liouville}, we are able to prove a Liouville-type theorem. Hence it follows that a convex solution \(u_{0}\), which is constructed by the previous blow-argument, should be constant. Finally the inclusions \(\{0\}\subset \partial u(x)\subset\partial u_{0}(x)\subset\{ 0\}\) hold, and this completes the proof of (\ref{reduced question}), i.e., \(\partial u(x)=\{ 0\}\).

For maximum principles on the equation \(Lu=0\), the proofs are almost similar to those in Section \ref{Sect Maximum Principles}. Indeed, we first recall that the operator \(A\colon {\mathbb R}^{n}\ni z_{0}\mapsto \nabla_{z} W(z_{0})\in{\mathbb R}^{n}\) satisfies strict monotonicity (\ref{strict monotonicity}). Combining with monotonicity of the subdifferential operator \(\partial\Psi\), we can easily prove a comparison principle as in Proposition \ref{comparison principle}. Also, similarly to Lemma \ref{Hopf Lemma}, we can construct classical barrier subsolutions to $Lu=0$ in an open annulus, since the operator $L$ is locally uniformly elliptic outside a facet. These results enable us to prove a maximum principle outside a facet.

We are left to justify the remaining two problems, a blow-argument and the Liouville-type theorem. To show them, we have to make use of some basic facts on a convex functional which is homogeneous of degree $1$. These fundamental results are contained in Section \ref{Subsect convex deg 1}. 

For a blow-up argument as in Section \ref{Sect Blow-argument}, we similarly define rescaled solutions. Existence of a limit of these rescaled functions are guaranteed by the Arzel\`{a}--Ascoli theorem and a diagonal argument.
By proving Lemma \ref{convergence in the distributional sense} below, we are able to demonstrate that \(u_{0}\), a limit of rescaled solutions, is a weak solution to $Lu=0$ in ${\mathbb R}^{n}$, and this finishes our blow-up argument.
\begin{lemma}\label{convergence in the distributional sense}
Let \(U\subset{\mathbb R}^{n}\) be a bounded domain. Assume that sequences of functions \(\{u_{N}\}_{N=1}^{\infty}\subset W^{1,\,\infty}(U)\) and \(\{ f_{N}\}_{N=1}^{\infty}\subset L^{q}(U)\,(n<q\le\infty)\) satisfy all of the following.
\begin{enumerate}
\item For each \(N\in{\mathbb N}\), \(u_{N}\) satisfies \(Lu_{N}=f_{N}\) in \(W^{-1,\,\infty}(U)\).
\item There exists a constant \(M>0\), independent of \(N\in{\mathbb N}\), such that
\begin{equation}\label{uniform bound generalized}
\lvert \nabla u_{N}(x)\rvert\le M\quad\textrm{for a.e. }x\in U.
\end{equation}
\item There exists a function \(u\in W^{1,\,\infty}(U)\) such that
\begin{equation}\label{a.e. conv generalized}
\nabla u_{N}(x)\rightarrow\nabla u(x)\quad \textrm{for a.e. }x\in U.
\end{equation}
\item \(f_{N}\) strongly converges to \(0\) in \(L^{q}(U)\).
\end{enumerate}
Then \(u\) satisfies \(Lu=0\) in \(W^{-1,\,\infty}(U)\).
\end{lemma}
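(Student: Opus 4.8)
The plan is to imitate the limiting argument in the proof of Proposition~\ref{limit of rescaled solutions}, with the explicit structure of $\partial\lvert\,\cdot\,\rvert$ replaced by the abstract facts on $\Psi$ recorded in Section~\ref{Subsect Some properties}. For each $N\in{\mathbb N}$ fix $Z_{N}\in L^{\infty}(U,\,{\mathbb R}^{n})$ with $Z_{N}(x)\in\partial\Psi(\nabla u_{N}(x))$ for a.e.\ $x\in U$ and
\[
\int_{U}\langle Z_{N}\mid\nabla\phi\rangle\,dx+\int_{U}\langle A(\nabla u_{N})\mid\nabla\phi\rangle\,dx=\int_{U}f_{N}\phi\,dx\qquad\textrm{for all }\phi\in W_{0}^{1,\,1}(U).
\]
By Remark~\ref{boundedness of subdifferential} we have $\lVert Z_{N}\rVert_{L^{\infty}(U,\,{\mathbb R}^{n})}\le K$ uniformly in $N$, so by sequential weak$^{\ast}$ compactness of bounded sets in $L^{\infty}(U,\,{\mathbb R}^{n})$ (as in \cite[Corollary 3.30]{MR2759829}) we may pass to a subsequence, not relabelled, along which $Z_{N}\overset{\ast}{\rightharpoonup}Z$ in $L^{\infty}(U,\,{\mathbb R}^{n})$ for some $Z\in L^{\infty}(U,\,{\mathbb R}^{n})$; since the conclusion to be proved is existential, working along this subsequence is no loss.

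Next I would pass to the limit in the three terms of the weak formulation, with $\phi\in W_{0}^{1,\,1}(U)$ fixed. Since $U$ is bounded and $q>n$, the Sobolev embedding gives $W_{0}^{1,\,1}(U)\hookrightarrow L^{q^{\prime}}(U)$, so $f_{N}\to0$ in $L^{q}(U)$ forces $\int_{U}f_{N}\phi\,dx\to0$. Continuity of $A$, the a.e.\ convergence $\nabla u_{N}\to\nabla u$, and the uniform bound $\lvert\nabla u_{N}\rvert\le M$ give $A(\nabla u_{N})\to A(\nabla u)$ a.e.\ with $\lvert A(\nabla u_{N})\rvert\le\sup_{\lvert z\rvert\le M}\lvert A(z)\rvert<\infty$, so dominated convergence (the dominating function being $\bigl(\sup_{\lvert z\rvert\le M}\lvert A(z)\rvert\bigr)\lvert\nabla\phi\rvert\in L^{1}(U)$) yields $\int_{U}\langle A(\nabla u_{N})\mid\nabla\phi\rangle\,dx\to\int_{U}\langle A(\nabla u)\mid\nabla\phi\rangle\,dx$; and since $\nabla\phi\in L^{1}(U,\,{\mathbb R}^{n})$, the weak$^{\ast}$ convergence gives $\int_{U}\langle Z_{N}\mid\nabla\phi\rangle\,dx\to\int_{U}\langle Z\mid\nabla\phi\rangle\,dx$. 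Hence
\[
\int_{U}\langle Z\mid\nabla\phi\rangle\,dx+\int_{U}\langle A(\nabla u)\mid\nabla\phi\rangle\,dx=0\qquad\textrm{for all }\phi\in W_{0}^{1,\,1}(U).
\]

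The remaining, and genuinely delicate, point is to verify the constraint $Z(x)\in\partial\Psi(\nabla u(x))$ for a.e.\ $x\in U$; this is where one-homogeneity of $\Psi$ is used in an essential way. By the characterization (\ref{subdifferential expression}) it suffices to check $\tilde\Psi(Z(x))\le1$ and $\Psi(\nabla u(x))=\langle\nabla u(x)\mid Z(x)\rangle$ a.e. The first inequality follows from Lemma~\ref{Convergence lemma in L-infty}: since $Z_{N}(x)\in\partial\Psi(\nabla u_{N}(x))$, (\ref{subdifferential expression}) gives $\tilde\Psi(Z_{N}(x))\le1$ a.e., whence $\esssup_{x\in U}\tilde\Psi(Z(x))\le\liminf_{N\to\infty}\esssup_{x\in U}\tilde\Psi(Z_{N}(x))\le1$. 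For the identity, (\ref{subdifferential expression}) also gives $\Psi(\nabla u_{N})=\langle\nabla u_{N}\mid Z_{N}\rangle$ a.e.; testing against an arbitrary $\psi\in L^{\infty}(U)$ and integrating, the left-hand side converges to $\int_{U}\Psi(\nabla u)\psi\,dx$ by continuity of $\Psi$ and dominated convergence, while on the right-hand side $\nabla u_{N}\psi\to\nabla u\psi$ strongly in $L^{1}(U,\,{\mathbb R}^{n})$ (dominated convergence again, using $\lvert\nabla u_{N}\rvert\le M$) pairs against $Z_{N}\overset{\ast}{\rightharpoonup}Z$ to produce $\int_{U}\langle\nabla u\mid Z\rangle\psi\,dx$. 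As $\psi$ is arbitrary, $\Psi(\nabla u)=\langle\nabla u\mid Z\rangle$ a.e. Together with the displayed weak formulation this shows that the pair $(u,\,Z)$ satisfies $Lu=0$ in $W^{-1,\,\infty}(U)$. Everything but this last step is a routine combination of Sobolev embedding, dominated convergence, and the strong-times-weak$^{\ast}$ convergence principle; the one place demanding care is pushing the subdifferential inclusion through the weak$^{\ast}$ limit, which is exactly what Lemma~\ref{Convergence lemma in L-infty} together with the characterization (\ref{subdifferential expression}) is designed to handle.
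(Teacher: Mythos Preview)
Your proof is correct and follows the same overall architecture as the paper's: extract $Z_{N}$, pass to a weak$^{\ast}$ subsequential limit $Z$, push the weak formulation through by dominated convergence and the embedding $L^{q}(U)\hookrightarrow W^{-1,\infty}(U)$, and then verify $Z\in\partial\Psi(\nabla u)$ via the two conditions in (\ref{subdifferential expression}), with the first handled by Lemma~\ref{Convergence lemma in L-infty}.

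The one place your argument genuinely differs is the verification of the identity $\Psi(\nabla u)=\langle\nabla u\mid Z\rangle$. The paper splits $U$ into $D=\{\nabla u\neq0\}$ and its complement: on $U\setminus D$ the identity is trivial, while on $D$ the paper uses the regularity assumption $\Psi\in C^{1}({\mathbb R}^{n}\setminus\{0\})$ to conclude $Z_{N}(x)=\nabla_{z}\Psi(\nabla u_{N}(x))\to\nabla_{z}\Psi(\nabla u(x))$ pointwise a.e., whence $Z=\nabla_{z}\Psi(\nabla u)$ a.e.\ on $D$ by uniqueness of weak$^{\ast}$ limits, and Euler's identity (\ref{Euler's id}) finishes. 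Your route instead integrates the identity $\Psi(\nabla u_{N})=\langle\nabla u_{N}\mid Z_{N}\rangle$ against an arbitrary $\psi\in L^{\infty}(U)$ and passes to the limit using the strong-$L^{1}$-times-weak$^{\ast}$-$L^{\infty}$ pairing principle. This is equally valid and has the mild advantage of not invoking the $C^{1}$-regularity of $\Psi$ away from the origin; it would work for any convex positively $1$-homogeneous $\Psi$. The paper's version, by contrast, yields slightly more: it actually identifies $Z$ pointwise on $D$.
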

\begin{proof}
For each \(N\in{\mathbb N}\), there exists a vector field \(Z_{N}\in L^{\infty}(U,\,{\mathbb R}^{n})\) such that
\begin{equation}
Z_{N}(x)\in\partial \Psi(\nabla u(x))\quad\textrm{for a.e. }x\in U,
\end{equation}
\begin{equation}\label{weak form generalized local}
\int_{U}\langle Z_{N}\mid \nabla\phi\rangle\,dx+\int_{U}\langle A(\nabla u_{N})\mid\nabla\phi\rangle\,dx=\int_{U}f_{N}\phi\,dx\quad\textrm{for all }\phi\in W_{0}^{1,\,1}(U).
\end{equation}
Combining the assumption \(f_{N}\to f\) in \(L^{q}(U)\) with the continuous embedding \(L^{q}(U)\hookrightarrow W^{-1,\,\infty}(U)\), we get 
\begin{equation}\label{conv of external force term}
f_{N}\rightarrow 0\quad\textrm{in }W^{-1,\,\infty}(U).
\end{equation}
By \(A\in C({\mathbb R}^{n},\,{\mathbb R}^{n})\) and (\ref{uniform bound generalized}), the vector fields \(\{A(\nabla u_{N})\}_{N=1}^{\infty}\) satisfy
\[A(\nabla u_{N}(x))\rightarrow A(\nabla u(x))\quad\textrm{for a.e. }x\in U,\]
\[\lvert A_{N}(\nabla u_{N}(x))-A(\nabla u(x))\rvert\le C\quad\textrm{for a.e. }x\in U,\]
where $C$ is independent of \(N\in{\mathbb N}\). From these and Lebesgue's dominated convergence theorem, it follows that
\begin{equation}\label{weak convergence generalized}
A(\nabla u_{N})\overset{\ast}{\rightharpoonup} A(\nabla u)\quad\textrm{in }L^{\infty}(U,\,{\mathbb R}^{n}).
\end{equation}
As mentioned in Remark \ref{boundedness of subdifferential}--\ref{well-definedness on weak formulations}, the \(\{Z_{N}\}_{N=1}^{\infty}\subset L^{\infty}(U,\,{\mathbb R}^{n})\) is bounded. Hence by \cite[Corollary 3.30]{MR2759829}, we may take a subsequence \(\{Z_{N_{j}}\}_{j=1}^{\infty}\) so that
\begin{equation}\label{weak star limit generalized}
Z_{N_{j}} \overset{\ast}{\rightharpoonup} Z\quad\textrm{in }L^{\infty}(U,\,{\mathbb R}^{n})
\end{equation}
for some \(Z\in L^{\infty}(U,\,{\mathbb R}^{n})\). By (\ref{weak form generalized local})--(\ref{weak star limit generalized}) we obtain
\[\int_{U}\langle Z\mid \phi\rangle\,dx+\int_{U}\langle A(\nabla u)\mid\nabla\phi\rangle\,dx=0\quad\textrm{for all }\phi\in W_{0}^{1,\,1}(U).\]
Now we are left to prove that
\[Z(x)\in\partial\Psi(\nabla u(x))\quad\textrm{for a.e. }x\in U.\]
By (\ref{subdifferential expression}), it suffices to show that \(Z\) satisfies
\begin{equation}\label{first problem}
{\tilde \Psi}(Z(x))\le 1,
\end{equation}
\begin{equation}\label{second problem}
\Psi(\nabla u(x))=\langle Z\mid \nabla u(x)\rangle
\end{equation}
for a.e. \(x\in U\).
Similarly, it follows that for each \(N\in{\mathbb N}\), the vector field \(Z_{N}\) satisfies
\[\mleft\{\begin{array}{rcl}
{\tilde\Psi}(Z_{N}(x))&\le &1,\\
\Psi(\nabla u_{N}(x))&=&\langle Z_{N}\mid \nabla u(x)\rangle,
\end{array}\mright.\quad\textrm{for a.e. }x\in U.\]
Hence (\ref{first problem}) is an easy consequence of Lemma \ref{Convergence lemma in L-infty}. We recall (\ref{regularity assumptions}), and thus \(\partial\Psi(z_{0})=\{\nabla_{z}\Psi(z_{0})\}\) holds for all \(z_{0}\in{\mathbb R}^{n}\setminus\{ 0\}\). Combining (\ref{a.e. conv generalized}), we can check that \(Z_{N}(x)\rightarrow Z(x)\) for a.e. \(x\in D\coloneqq \{x\in U\mid \nabla u(x)\not= 0\}\). Hence (\ref{second problem}) holds for a.e. \(x\in D\). Note that (\ref{second problem}) is clear for \(x\in U\setminus D\), and this completes the proof.
\end{proof}

We prove a Liouville-type theorem as in Theorem \ref{Liouville theorem}. In other words, for a convex solution to \(Lu=0\) in \({\mathbb R}^{n}\), we show that $F$, the facet of \(u\), would satisfy either $F=\emptyset$ or $F={\mathbb R}^{n}$. Assume by contradiction that \(F\) satisfies \(\emptyset\subsetneq F\subsetneq {\mathbb R}^{n}\). Then by Lemma \ref{determination of the shape of the facet}, we may write a convex solution \(u\) by either of (\ref{Possible Convex Solution Type 1})--(\ref{Possible Convex Solution Type 3}). However, Lemma \ref{Lemma of no longer solution} below states that $u$ is no longer a weak solution, and this completes our proof.
\begin{lemma}\label{Lemma of no longer solution}
Let $u$ be a piecewise-linear function defined as in either of (\ref{Possible Convex Solution Type 1})--(\ref{Possible Convex Solution Type 3}). Then $u$ is not a weak solution to $Lu=0$ in ${\mathbb R}^{n}$.
\end{lemma}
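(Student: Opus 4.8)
The plan is to argue by contradiction, following the proof of the corresponding claim inside Theorem~\ref{Liouville theorem} verbatim, with $b\lvert\,\cdot\,\rvert$ and $\lvert\,\cdot\,\rvert^{p}/p$ replaced by $\Psi$ and $W$ and with $\lvert\,\cdot\,\rvert^{p-2}(\,\cdot\,)$ replaced by $A=\nabla_{z}W$. Set $Q'\coloneqq(-1,1)^{n-1}\subset{\mathbb R}^{n-1}$ and $Q\coloneqq(-d,d)\times Q'\subset{\mathbb R}^{n}$, where $d>0$ is small — and, in the case (\ref{Possible Convex Solution Type 3}), additionally $d<l_{0}$ — and will be fixed at the end. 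Since $Q\Subset{\mathbb R}^{n}$ is a bounded Lipschitz domain, if $u$ were a weak solution to $Lu=0$ in ${\mathbb R}^{n}$ in the sense of Definition~\ref{Def generalized weak sols}, there would exist $Z\in L^{\infty}(Q,\,{\mathbb R}^{n})$ such that $(u,\,Z)$ satisfies (\ref{general weak formulation}) with $f\equiv 0$ and (\ref{general subgradient}). I would test this identity with a product function $\phi(x_{1},\,x')\coloneqq\phi_{1}(x_{1})\phi_{2}(x')$, where $0\le\phi_{2}\in C_{c}^{1}(Q')$ with $\phi_{2}\not\equiv 0$ and $0\le\phi_{1}\in C_{c}^{1}((-d,d))$ satisfies $\max\phi_{1}=\phi_{1}(0)>0$ and, for (\ref{Possible Convex Solution Type 1}) and (\ref{Possible Convex Solution Type 3}), in addition $\phi_{1}'\ge 0$ on $(-d,0)$. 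Writing $Q_{l}\coloneqq(-d,0)\times Q'$ and $Q_{r}\coloneqq(0,d)\times Q'$, I would split the integral in (\ref{general weak formulation}) over $Q_{l}$ and $Q_{r}$.

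On $Q_{r}$ one has $\nabla u=t_{1}e_{1}\neq 0$ in all three cases, so (\ref{general subgradient}) together with differentiability of $\Psi$ away from the origin forces $Z=\nabla_{z}\Psi(t_{1}e_{1})=\nabla_{z}\Psi(e_{1})$ by (\ref{gradient and Hessian for homogeneous}), Euler's identity (\ref{Euler's id}) gives $\langle Z\mid e_{1}\rangle=\Psi(e_{1})$, and $\langle A(t_{1}e_{1})\mid e_{1}\rangle=t_{1}^{-1}\langle A(t_{1}e_{1})\mid t_{1}e_{1}\rangle>0$ by (\ref{claim for gradW}). Since $\nabla u$ is constant on $Q_{r}$ and $\phi$ vanishes on $\partial Q_{r}\setminus(\{0\}\times Q')$, the Gauss--Green theorem evaluates $\int_{Q_{r}}\langle Z+A(\nabla u)\mid\nabla\phi\rangle\,dx$ as $-\bigl(\Psi(e_{1})+\langle A(t_{1}e_{1})\mid e_{1}\rangle\bigr)\phi_{1}(0)\lVert\phi_{2}\rVert_{L^{1}(Q')}$. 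For (\ref{Possible Convex Solution Type 1}) and (\ref{Possible Convex Solution Type 3}) one has $\nabla u=0$ on $Q_{l}$, hence $A(\nabla u)=A(0)=0$ by (\ref{gradW at 0}), while $Z(x)\in\partial\Psi(0)$, so $\lvert Z\rvert\le K$ a.e. by Remark~\ref{boundedness of subdifferential} and, crucially, the subgradient inequality at the origin yields $\langle Z\mid e_{1}\rangle\le\Psi(e_{1})$. Using this, $\phi_{1}'\phi_{2}\ge 0$ on $Q_{l}$, Fubini's theorem and $\phi_{1}(-d)=0$, $0\le\phi_{1}\le\phi_{1}(0)$, I would bound $\int_{Q_{l}}\langle Z\mid\nabla\phi\rangle\,dx\le\Psi(e_{1})\phi_{1}(0)\lVert\phi_{2}\rVert_{L^{1}(Q')}+K\,d\,\phi_{1}(0)\lVert\nabla_{x'}\phi_{2}\rVert_{L^{1}(Q')}$. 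Adding the two pieces the $\Psi(e_{1})$-terms cancel, and (\ref{general weak formulation}) forces $0\le K\,d\,\lVert\nabla_{x'}\phi_{2}\rVert_{L^{1}(Q')}-\langle A(t_{1}e_{1})\mid e_{1}\rangle\lVert\phi_{2}\rVert_{L^{1}(Q')}$, which fails once $d$ is small enough. For the case (\ref{Possible Convex Solution Type 2}) the argument is even shorter: now $\nabla u=-t_{2}e_{1}\neq 0$ on $Q_{l}$ as well, so $Z=\nabla_{z}\Psi(-e_{1})$ there, and exact integration by parts on $Q_{l}$ and $Q_{r}$ turns (\ref{general weak formulation}) into $0=\bigl(\langle c^{-}\mid e_{1}\rangle-\langle c^{+}\mid e_{1}\rangle\bigr)\phi_{1}(0)\lVert\phi_{2}\rVert_{L^{1}(Q')}$, where $c^{-}$ and $c^{+}$ denote the constant vectors $\nabla_{z}\Psi(\nabla u)+A(\nabla u)$ on $Q_{l}$ and $Q_{r}$; by (\ref{Euler's id}) and (\ref{claim for gradW}), $\langle c^{-}\mid e_{1}\rangle=-\Psi(-e_{1})+\langle A(-t_{2}e_{1})\mid e_{1}\rangle<0<\Psi(e_{1})+\langle A(t_{1}e_{1})\mid e_{1}\rangle=\langle c^{+}\mid e_{1}\rangle$, and since $\phi_{1}(0)\lVert\phi_{2}\rVert_{L^{1}(Q')}>0$ this is a contradiction.

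I expect the main obstacle to be the $\Psi$-contribution over the facet side $Q_{l}$ for the types (\ref{Possible Convex Solution Type 1}) and (\ref{Possible Convex Solution Type 3}): there $\nabla u=0$, so $Z$ is not determined and one knows only $Z\in\partial\Psi(0)$, a possibly large convex set. The way around this is to keep only the two facts about $Z$ that survive on $Q_{l}$ — the bound $\lvert Z\rvert\le K$ from Remark~\ref{boundedness of subdifferential} and the one-sided bound $\langle Z\mid e_{1}\rangle\le\Psi(e_{1})$, which is just the subgradient inequality $\Psi(e_{1})\ge\Psi(0)+\langle Z\mid e_{1}-0\rangle$ — and to notice that this upper bound is matched exactly by the $-\Psi(e_{1})$ produced at the $Q_{r}$ boundary, so that the degenerate $\Psi$-part drops out and the strictly positive $W$-term $\langle A(t_{1}e_{1})\mid e_{1}\rangle>0$ from (\ref{claim for gradW}) drives the contradiction. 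The choice $\phi_{1}'\ge 0$ on $(-d,0)$ is exactly what lets one absorb the otherwise uncontrolled sign of $\langle Z\mid e_{1}\rangle$ on $Q_{l}$, and shrinking $d$ is what makes the tangential error $K\,d\,\lVert\nabla_{x'}\phi_{2}\rVert_{L^{1}(Q')}$ negligible against the $O(1)$ surplus.
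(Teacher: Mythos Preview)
Your argument is correct and follows the paper's proof in all structural respects: the same test function, the same split $Q=Q_{l}\cup Q_{r}$, Gauss--Green on the side where $\nabla u\neq 0$, Euler's identity to cancel the $\Psi$-contributions across the interface, and (\ref{claim for gradW}) to extract the strictly positive remainder from $W$. The only variation is cosmetic: on $Q_{l}$ for types (\ref{Possible Convex Solution Type 1}) and (\ref{Possible Convex Solution Type 3}) the paper bounds $\langle Z\mid\nabla\phi\rangle$ by the Cauchy--Schwarz-type inequality (\ref{CW ineq}) together with ${\tilde\Psi}(Z)\le 1$ and then the triangle inequality (\ref{triangle ineq}) for $\Psi$, obtaining the tangential error $d\,\lVert\Psi(0,\nabla_{x'}\phi_{2})\rVert_{L^{1}(Q')}$, whereas you split into the $e_{1}$-component (controlled by the subgradient inequality $\langle Z\mid e_{1}\rangle\le\Psi(e_{1})$, which is the same bound) and the tangential part (controlled by $\lvert Z\rvert\le K$), obtaining $K\,d\,\lVert\nabla_{x'}\phi_{2}\rVert_{L^{1}(Q')}$; both vanish as $d\to 0$ and the arguments are interchangeable.
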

\begin{proof}
As in the proof of Theorem \ref{Liouville theorem}, we introduce a constant \(d>0\), and set open cubes $Q^{\prime}\subset{\mathbb R}^{n-1}$ and $Q,\,Q_{l},\,Q_{r}\subset {\mathbb R}^{n}$. By choosing sufficiently small $d>0$, we show that $u$ does not satisfy $Lu=0$ in $W^{-1,\,\infty}(Q)$. Assume by contradiction that there exists a vector field \(Z\in L^{\infty}(Q,\,{\mathbb R}^{n})\) such that the pair \((u,\,Z)\) satisfies $Lu=0$ in $W^{-1,\,\infty}(Q)$.

We first show that a function $u$ defined as in (\ref{Possible Convex Solution Type 1}) is not a weak solution. For this case, (\ref{gradient and Hessian for homogeneous}) implies that \(Z\) satisfies \(Z(x)=\nabla_{z}\Psi(e_{1})\) for a.e. \(x\in Q_{r}\). We take and fix non-negative functions \(\phi_{1}\in C_{c}^{1}((-d,\,d)),\,\phi_{2}\in C_{c}^{1}\mleft(Q^{\prime}\mright)\) such that (\ref{test function in 2 variables}) holds, and define \(\phi\in C_{c}^{1}(Q)\) by \(\phi(x_{1},\,x^{\prime})\coloneqq \phi_{1}(x_{1})\phi_{2}(x^{\prime})\) for \((x_{1},\,x^{\prime})\in (-d,\,d) \times Q^{\prime}=Q\). Testing \(\phi\) into \(Lu=0\) in \(W^{-1,\,\infty}(Q)\), we have
\begin{align*}
0&=\int_{Q_{l}}\langle Z+A(0)\mid \nabla(\phi_{1}\phi_{2})\rangle\,dx+\int_{Q_{r}}\mleft\langle \nabla_{z}\Psi(e_{1})+A(t_{1}e_{1})\mathrel{}\middle| \mathrel{} \nabla(\phi_{1}\phi_{2})\mright\rangle\,dx\\&\le \int_{Q_{l}}\Psi(\nabla(\phi_{1}\phi_{2})){\tilde\Psi}(Z(x)) \,dx\\&\quad +\phi_{1}(0)\int_{Q^{\prime}}\phi_{2}(x^{\prime})\langle \nabla_{z}\Psi(e_{1})\mid -e_{1}\rangle\,dx^{\prime}+\phi_{1}(0)\int_{Q^{\prime}}\phi_{2}(x^{\prime})\langle A(t_{1}e_{1})\mid-e_{1}\rangle\,dx^{\prime}\\&\eqqcolon I_{1}+I_{2}+I_{3}.
\end{align*}
Here we have used the Cauchy--Schwarz-type inequality (\ref{gradW at 0}) for the integral over \(Q_{l}\), and applied the Gauss--Green theorem to the integration over $Q_{r}$. For \(I_{1}\), we make use of (\ref{CW ineq})--(\ref{triangle ineq}), Fubini's theorem and (\ref{test function in 2 variables}). Then we have
\begin{align*}
I_{1}&\le \int_{Q_{l}}\phi_{1}(x_{1})\Psi(0,\,\nabla_{x^{\prime}}\phi_{2}(x^{\prime}))\,dx+\int_{Q_{l}}\phi_{1}^{\prime}(x_{1})\phi_{2}(x^{\prime})\Psi(e_{1})\,dx\\&\le \phi_{1}(0)\mleft(d\cdot \lVert \Psi(0,\,\nabla_{x^{\prime}}\phi_{2})\rVert_{L^{1}(Q^{\prime})}+\Psi(e_{1})\lVert \phi_{2}\rVert_{L^{1}(Q^{\prime})}\mright),
\end{align*}
where \(\nabla_{x^{\prime}}\phi_{2}\coloneqq (\partial_{x_{2}}\phi_{2},\,\dots,\,\partial_{x_{n}}\phi_{2})\).
For \(I_{2}\), recalling Euler's identity (\ref{Euler's id}), we get \(I_{2}=-\phi_{1}(0)\Psi(e_{1})\lVert \phi_{2}\rVert_{L^{1}(Q^{\prime})}\). We set a constant \(\mu\coloneqq \langle A(t_{1}e_{1})\mid e_{1}\rangle\), which is positive by (\ref{claim for gradW}). Then we obtain
\[I_{1}+I_{2}+I_{3}\le \phi_{1}(0)\mleft(d\cdot \lVert \Psi(0,\,\nabla_{x^{\prime}}\phi_{2})\rVert_{L^{1}(Q^{\prime})}-\mu\lVert \phi_{2}\rVert_{L^{1}(Q^{\prime})}\mright).\]
Choosing \(d=d(\mu,\,\Psi,\,\phi_{2})>0\) sufficiently small, we have \(0\le I_{1}+I_{2}+I_{3}<0\), which is a contradiction. Similarly we can deduce that \(u\) defined as in (\ref{Possible Convex Solution Type 3}) does not satisfy \(Lu=0\) in \(W^{-1,\,\infty}(Q)\), since it suffices to restrict \(d<l_{0}\). For the remaining case (\ref{Possible Convex Solution Type 2}), we have already known that
\[Z(x)=\mleft\{\begin{array}{cc}
\nabla_{z}\Psi(e_{1}) & \textrm{for a.e. }x\in Q_{r},\\ \nabla_{z}\Psi(-e_{1}) & \textrm{for a.e. }x\in Q_{l}\end{array} \mright.\]
by definition of \(Z\) and (\ref{gradient and Hessian for homogeneous}).
We set two constants \(\mu_{1}\coloneqq \langle A(t_{1}e_{1})\mid e_{1}\rangle,\,\mu_{2}\coloneqq \langle A(-t_{2}e_{1})\mid -e_{1}\rangle\), both of which are positive by (\ref{claim for gradW}).
Testing the same function \(\phi\in C_{c}^{1}(Q)\) into \(Lu=0\) in \(W^{-1,\,\infty}(Q)\), we obtain
\begin{align*}
0&=\int_{Q_{l}}\mleft\langle \nabla_{z}\Psi(-e_{1})+A(-t_{2}e_{1})\mathrel{}\middle| \mathrel{} \nabla(\phi_{1}\phi_{2})\mright\rangle\,dx+\int_{Q_{r}}\mleft\langle \nabla_{z}\Psi(e_{1})+A(t_{1}e_{1})\mathrel{}\middle| \mathrel{} \nabla(\phi_{1}\phi_{2})\mright\rangle\,dx\\&=\int_{Q^{\prime}}\phi_{1}(0)\phi_{2}(x^{\prime})\mleft\langle \nabla_{z}\Psi(-e_{1})+A(-t_{2}e_{1}) \mathrel{}\middle| \mathrel{} e_{1}\mright\rangle\,dx^{\prime}+\int_{Q^{\prime}}\phi_{1}(0)\phi_{2}(x^{\prime})\mleft\langle \nabla_{z}\Psi(e_{1})+A(t_{1}e_{1}) \mathrel{}\middle| \mathrel{} -e_{1}\mright\rangle\,dx^{\prime}\\&=-\phi_{1}(0)\mleft(\Psi(e_{1})+\Psi(-e_{1})+\mu_{1}+\mu_{2}\mright)\int_{Q^{\prime}}\phi_{2}(x^{\prime})\,dx^{\prime}<0,
\end{align*}
which is a contradiction. Here we have used the Gauss--Green theorem and Euler's identity (\ref{Euler's id}). This completes the proof.
\end{proof}\section*{Acknowledgement}
The first author is partly supported by the Japan Society for the Promotion of Science through grants Kiban A (No. 19H00639). Challenging Pioneering Research (Kaitaku) (No. 18H05323), Kiban A (No. 17H01091). 

\appendix
\section{Proofs for a few basic facts}\label{Appendix A}
In this section, we give proofs for a few basic facts used in this paper for completeness.
\subsection{A Poincar\'{e}-type inequality}
We give a precise proof of Lemma \ref{Poincare-type lemma for differential quotient}, a Poincar\'{e}-type inequality for difference quotients of functions in \(W_{0}^{1,\,p}\,(1\le p<\infty)\). This result is used in the proof of Lemma \ref{regularity at regular point}. The proof of Lemma \ref{Poincare-type lemma for differential quotient} is essentially a modification of that of the Poincar\'{e} inequality for the Sobolev space \(W_{0}^{1,\,p}\) \cite[Proposition 3.10]{MR3099262}.
\begin{lemma}\label{Poincare-type lemma for differential quotient}
Let \(\Omega\subset {\mathbb R}^{n}\) be a bounded open set and \(1\le p<\infty\). For all \(u\in W_{0}^{1,\,p}(\Omega),j\in\{\,1,\,\dots,\,n\,\},\,\,h\in{\mathbb R}\setminus\{0\}\), we have
\begin{equation}\label{Poincare-type estimate for differential quotient}
\lVert \Delta_{j,\,h}u\rVert_{L^{p}(\Omega)}\le \lVert \nabla u\rVert_{L^{p}(\Omega)}.
\end{equation}
Here \(\Delta_{j,\,h}u\) is defined by
\[\Delta_{j,\,h}u(x)\coloneqq \frac{\overline{u}(x+he_{j})-u(x)}{h}\quad\textrm{for }x\in\Omega.\]
\end{lemma}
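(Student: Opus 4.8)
The plan is to reduce to smooth, compactly supported functions by density, and then obtain the estimate from the fundamental theorem of calculus together with Jensen's inequality, Fubini's theorem, and the translation invariance of Lebesgue measure. Throughout, $\overline{u}$ denotes the extension of $u\in W_0^{1,p}(\Omega)$ by zero to all of ${\mathbb R}^n$.

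First I would prove (\ref{Poincare-type estimate for differential quotient}) for $u\in C_c^\infty(\Omega)$, identifying $u$ with $\overline{u}\in C_c^\infty({\mathbb R}^n)$. For such $u$ and every $x\in{\mathbb R}^n$, the fundamental theorem of calculus gives
\[
\overline{u}(x+he_j)-\overline{u}(x)=h\int_0^1 \partial_{x_j}\overline{u}(x+the_j)\,dt,
\]
so that $\Delta_{j,h}u(x)=\int_0^1 \partial_{x_j}\overline{u}(x+the_j)\,dt$ for a.e.\ $x\in\Omega$. Applying Jensen's inequality with respect to the probability measure $dt$ on $[0,1]$ to the convex function $t\mapsto|t|^p$ yields
\[
\lvert\Delta_{j,h}u(x)\rvert^p\le\int_0^1\lvert\partial_{x_j}\overline{u}(x+the_j)\rvert^p\,dt .
\]
Integrating over $x\in\Omega$, using Fubini's theorem, the translation invariance of Lebesgue measure, and the fact that $\partial_{x_j}\overline{u}$ vanishes a.e.\ outside $\Omega$, I obtain
\[
\int_\Omega\lvert\Delta_{j,h}u\rvert^p\,dx\le\int_0^1\!\!\int_{{\mathbb R}^n}\lvert\partial_{x_j}\overline{u}(x+the_j)\rvert^p\,dx\,dt=\int_\Omega\lvert\partial_{x_j}u\rvert^p\,dx\le\int_\Omega\lvert\nabla u\rvert^p\,dx ,
\]
which is (\ref{Poincare-type estimate for differential quotient}) in the smooth case after taking $p$-th roots.

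For a general $u\in W_0^{1,p}(\Omega)$, I would choose $u_k\in C_c^\infty(\Omega)$ with $u_k\to u$ in $W^{1,p}(\Omega)$. Since the zero-extension is a linear isometry $L^p(\Omega)\hookrightarrow L^p({\mathbb R}^n)$ and translation is an isometry of $L^p({\mathbb R}^n)$, one has
\[
\lVert\Delta_{j,h}u_k-\Delta_{j,h}u\rVert_{L^p(\Omega)}\le\frac{1}{\lvert h\rvert}\Bigl(\lVert\overline{u_k}(\,\cdot+he_j)-\overline{u}(\,\cdot+he_j)\rVert_{L^p({\mathbb R}^n)}+\lVert u_k-u\rVert_{L^p(\Omega)}\Bigr)\le\frac{2}{\lvert h\rvert}\lVert u_k-u\rVert_{L^p(\Omega)}\longrightarrow 0,
\]
while $\lVert\nabla u_k\rVert_{L^p(\Omega)}\to\lVert\nabla u\rVert_{L^p(\Omega)}$. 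Passing to the limit in the inequality $\lVert\Delta_{j,h}u_k\rVert_{L^p(\Omega)}\le\lVert\nabla u_k\rVert_{L^p(\Omega)}$ gives the claim.

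I do not expect a serious obstacle here; the proof is essentially the classical argument for the Poincar\'e inequality on $W_0^{1,p}$ adapted to difference quotients. The only points needing a little care are the validity of the density/limiting step (which rests on the zero-extension behaving well for $W_0^{1,p}$ functions, so that $\Delta_{j,h}$ is globally defined for every $h\neq0$) and the bookkeeping in the Fubini/translation step; both are routine.
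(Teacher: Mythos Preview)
Your proof is correct and follows essentially the same route as the paper: reduce to $C_c^\infty$ by density (via the $L^p$-continuity of $\Delta_{j,h}$), then apply the fundamental theorem of calculus, H\"older/Jensen in $t$, Fubini, and translation invariance. The only cosmetic difference is that the paper bounds by $\lvert\nabla\overline{u}\rvert$ via Cauchy--Schwarz on the inner product with $e_j$, whereas you work directly with $\partial_{x_j}\overline{u}$; both yield the same final estimate.
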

Before the proof of Lemma \ref{Poincare-type lemma for differential quotient}, we note that \(\Delta_{j,\,h}u(x)\) makes sense for a.e. \(x\in\Omega\) by the zero extension of \(u\in W_{0}^{1,\,p}(U)\).
That is, for a given $u\in W_{0}^{1,\,p}(U)$, we set $\overline{u}\in W^{1,\,p}({\mathbb R}^{n})$ by
\begin{equation}\label{explicit extension by zero}
\overline{u}(x)\coloneqq\left\{\begin{array}{cc}u(x)& x\in U, \\ 0 & x\in{\mathbb R}^{n}\setminus U.\end{array}\right.
\end{equation}
\begin{proof}
We fix \(j\in\{\,1,\,\dots,\,n\,\},\,\,h\in{\mathbb R}\setminus\{0\}\).
We first note that the operator \(\Delta_{j,\,h}\colon W_{0}^{1,\,p}(U)\rightarrow L^{p}(U)\) is bounded, since for all \(u\in W_{0}^{1,\,p}(U)\) we have
\begin{align*}
\lVert \Delta_{j,\,h}u\rVert_{L^{p}(U)}&\le \frac{1}{\lvert h\rvert}\mleft[\mleft(\int_{U}\lvert \overline{u}(x+h)\rvert^{p}\,dx\mright)^{1/p}+\mleft(\int_{U}\lvert u(x)\rvert^{p}\,dx\mright)^{1/p}\mright]\\&\le \frac{2}{\lvert h\rvert}\lVert u\rVert_{L^{p}(U)}\le \frac{C(p,\,U)}{\lvert h\rvert}\lVert \nabla u\rVert_{L^{p}(U)}
\end{align*}
by the Minkowski inequality and the Poincar\'{e} inequality. Here \(\overline{u}\in W^{1,\,p}({\mathbb R}^{n})\) is defined as in (\ref{explicit extension by zero}).
Hence by a density argument, it suffices to check that (\ref{Poincare-type estimate for differential quotient}) holds true for all \(u\in C_{c}^{\infty}(U)\).
Let \(u\in C_{c}^{\infty}(U)\).
Then for all \(x\in U\), we have 
\begin{align*}
\lvert \overline{u}(x+he_{j})-u(x)\rvert&=\mleft\lvert\int_{0}^{1}\mleft\langle\nabla \overline{u}(x+the_{j})\mathrel{}\middle|\mathrel{}he_{j}\mright\rangle\,dt\mright\rvert\\&\le\lvert h\rvert \int_{0}^{1}\lvert\nabla \overline{u}(x+the_{j})\rvert \,dt\le \lvert h\rvert \mleft(\int_{0}^{1}\lvert\nabla \overline{u}(x+the_{j})\rvert^{p}\,dt\mright)^{1/p}
\end{align*}
by the Cauchy-Schwarz inequality and H\"{o}lder's inequality. 
From this estimate we get
\begin{align*}
\lVert \Delta_{j,\,h}u\rVert_{L^{p}(U)}^{p}&\le \int_{\Omega}\int_{0}^{1}\lvert \nabla \overline{u}(x+the_{j})\rvert^{p}\,dt\,dx\\&=\int_{0}^{1}\underbrace{\int_{U}\lvert \nabla \overline{u}(x+the_{j})\rvert^{p}\,dx}_{\le \lVert \nabla u\rVert_{L^{p}(U)}^{p}}\,dt\quad (\textrm{by Fubini's theorem})
\\&\le \lVert \nabla u\rVert_{L^{p}(U)}^{p}.
\end{align*}
Hence we obtain (\ref{Poincare-type estimate for differential quotient}) for all \(u\in C_{c}^{\infty}(U)\), and this completes the proof.
\end{proof}
\subsection{Convex analysis}
Lemma \ref{coercivity lemma for convex function} is used in the proof of Lemma \ref{regularity at regular point} for a justification of local \(W^{2,\,2}\)-regularity of a convex weak solution outside of the facet.
\begin{lemma}\label{coercivity lemma for convex function}
Let $u$ be a real-valued convex function in a convex domain $\Omega\subset{\mathbb R}^{n}$.
Assume that \(x_{1},\,x_{2}\in\Omega\) satisfy \(x_{1}\neq x_{2}\), and set
\(d\coloneqq \lvert x_{2}-x_{1}\rvert>0,\,\nu\coloneqq d^{-1}(x_{2}-x_{1})\). Then for all \(z_{2}\in\partial u(x_{2})\), we have
\begin{equation}\label{An estimate by convexity}
\langle z_{2} \mid \nu \rangle\ge \frac{u(x_{2})-u(x_{1})}{d}.
\end{equation}
\end{lemma}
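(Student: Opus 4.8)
The plan is to deduce the estimate directly from the defining subgradient inequality for $\partial u(x_2)$, so no auxiliary construction is needed. Recall that by definition, $z_2 \in \partial u(x_2)$ means
\[
u(x) \ge u(x_2) + \langle z_2 \mid x - x_2 \rangle \quad \textrm{for all } x \in \Omega .
\]
Since $\Omega$ is a domain containing both $x_1$ and $x_2$, the point $x_1$ is an admissible choice of $x$ in this inequality.

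First I would substitute $x = x_1$, obtaining $u(x_1) \ge u(x_2) + \langle z_2 \mid x_1 - x_2 \rangle$. Then I would rewrite $x_1 - x_2 = -d\nu$ using the definitions $d = \lvert x_2 - x_1\rvert$ and $\nu = d^{-1}(x_2 - x_1)$, which gives $u(x_1) \ge u(x_2) - d \langle z_2 \mid \nu \rangle$. Rearranging, $d \langle z_2 \mid \nu\rangle \ge u(x_2) - u(x_1)$, and dividing by $d > 0$ yields exactly (\ref{An estimate by convexity}).

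There is essentially no obstacle here: the lemma is a one-line consequence of the definition of the subdifferential, and convexity enters only through the existence and defining property of $\partial u$. The only minor point to keep in mind is that one uses the \emph{global} form of the subgradient inequality over all of $\Omega$ (not merely a local one), which is justified because $u$ is convex on the convex set $\Omega$, so that $\partial u(x_2)$ in the sense of this paper consists precisely of those $z_2$ satisfying the displayed inequality for every $x \in \Omega$.
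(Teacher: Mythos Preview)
Your proof is correct and follows essentially the same approach as the paper: both substitute $x = x_1 = x_2 - d\nu$ into the subgradient inequality at $x_2$ and rearrange.
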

\begin{proof}
By \(z_{2}\in\partial u(x_{2})\), we have a subgradient inequality
\[u(x)\ge u(x_{2})+\langle z_{2}\mid x-x_{2}\rangle\]
for all \(x\in\Omega\). Substituting \(x\coloneqq x_{1}=x_{2}-d\nu\in\Omega\), we obtain
\[u(x_{1})\ge u(x_{2})-d\langle z_{2}\mid \nu\rangle,\]
which yields (\ref{An estimate by convexity}).
\end{proof}
\begin{remark}\upshape
Instead of subgradient inequalities, we are able to show (\ref{An estimate by convexity}) by monotonicity of \(\partial u\). For each fixed \(x_{1},\,x_{2}\in\Omega\) with \(x_{1}\neq x_{2}\), we may take and fix \(x_{3}\coloneqq x_{1}+t(x_{2}-x_{1})\) for some \(0<t<1\) and \(z_{3}\in\partial u(x_{3})\) such that
\begin{equation}\label{mean value theorem for non-smooth convex function}
u(x_{2})-u(x_{1})=\langle z_{3}\mid x_{2}-x_{1}\rangle,
\end{equation}
with the aid of the mean value theorem for non-smooth convex functions \cite[Theorem D.6]{MR3887613}. \(x_{2}-x_{1}=d\nu\) is clear by definitions of \(d,\,\nu\). Noting \(x_{2}-x_{3}=(1-t)d\nu\), we can check that
\[\langle z_{2}-z_{3}\mid \nu \rangle=\frac{1}{(1-t)d}\langle z_{2}-z_{3} \mid x_{2}-x_{3}\rangle\ge 0\]
by monotonicity of \(\partial u\). Combining these results with (\ref{mean value theorem for non-smooth convex function}), we obtain
\[u(x_{2})-u(x_{1})=d\langle z_{3}\mid \nu\rangle\le d\langle z_{2}\mid \nu\rangle,\]
which yields (\ref{An estimate by convexity}).
\end{remark}
The following lemma is used in the proof of Proposition \ref{limit of rescaled solutions}.
\begin{lemma}\label{a.e. pointwise convergence for convex functions}
Let \(U\subset {\mathbb R}^{n}\) be a convex open set, and let \(\{ u_{N}\}_{N=1}^{\infty}\) be a sequence of real-valued convex functions in \(U\). Assume that this sequence is uniformly Lipschitz. In other words, there is a constant \(L>0\) independent of \(N\in{\mathbb N}\) such that
\begin{equation}\label{uniform bound}
\lvert u_{N}(x)-u_{N}(y)\rvert\le L\lvert x-y\rvert\quad\textrm{for all }x,\,y\in U.
\end{equation}
If there exists a function \(u_{\infty}\colon U\rightarrow{\mathbb R}\) such that
\begin{equation}\label{pointwise conv}
u_{N}(x)\rightarrow u_{\infty}(x)\quad\textrm{for all }x\in U,
\end{equation}
then we have \(\nabla u_{N}(x)\rightarrow \nabla u_{\infty}(x)\) for a.e. \(x\in U\).
\end{lemma}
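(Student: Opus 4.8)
The plan is to argue pointwise at almost every $x\in U$, exploiting that a classical gradient of a convex function is a subgradient, and that differentiability of the limit function collapses its subdifferential to a single point.

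First I would record the soft consequences of the hypotheses. Passing to the limit in (\ref{uniform bound}) and (\ref{pointwise conv}) shows that $u_{\infty}$ is convex and $L$-Lipschitz on $U$; in particular Rademacher's theorem gives a full-measure set $A\subset U$ on which $u_{\infty}$ is differentiable. Likewise each $u_{N}$ is differentiable on a full-measure set $A_{N}\subset U$, and wherever $u_{N}$ is differentiable the Lipschitz bound forces $\lvert\nabla u_{N}(x)\rvert\le L$ (evaluate the directional derivative in the direction $\nabla u_{N}(x)/\lvert\nabla u_{N}(x)\rvert$, the bound being trivial when the gradient vanishes). I then set $A_{\infty}\coloneqq A\cap\bigcap_{N}A_{N}$, which still has full measure, and I will prove $\nabla u_{N}(x)\to\nabla u_{\infty}(x)$ for every $x\in A_{\infty}$.

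Fix $x\in A_{\infty}$. Since $\{\nabla u_{N}(x)\}_{N}$ is bounded by $L$, it suffices to show that $\nabla u_{\infty}(x)$ is its only accumulation point. So suppose $\nabla u_{N_{k}}(x)\to v$ along some subsequence. Because $\nabla u_{N_{k}}(x)\in\partial u_{N_{k}}(x)$ (Remark \ref{Some properties on convex function}), the subgradient inequality
\[
u_{N_{k}}(y)\ge u_{N_{k}}(x)+\langle\nabla u_{N_{k}}(x)\mid y-x\rangle
\]
holds for all $y\in U$; letting $k\to\infty$ and using the pointwise convergence (\ref{pointwise conv}) at both $y$ and $x$ yields $u_{\infty}(y)\ge u_{\infty}(x)+\langle v\mid y-x\rangle$ for all $y\in U$, that is, $v\in\partial u_{\infty}(x)$. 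Since $x\in A$, the convex function $u_{\infty}$ is differentiable at $x$, so $\partial u_{\infty}(x)=\{\nabla u_{\infty}(x)\}$ by Remark \ref{Some properties on convex function}, and hence $v=\nabla u_{\infty}(x)$. As every subsequence of the bounded sequence $\{\nabla u_{N}(x)\}_{N}$ has a further subsequence converging to $\nabla u_{\infty}(x)$, the whole sequence converges to $\nabla u_{\infty}(x)$.

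The argument is almost entirely soft, and the only mild obstacle is bookkeeping: the $u_{N}$ need not be differentiable at the same points, which is precisely why I intersect the countably many full-measure sets $A_{N}$ with $A$ before fixing $x$. Beyond that, everything reduces to passing to the limit in a subgradient inequality, so no local uniform convergence and no compactness beyond boundedness of $\{\nabla u_{N}(x)\}_{N}$ is required.
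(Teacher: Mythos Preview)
Your argument is correct and essentially identical to the paper's own proof: both restrict to the full-measure set where all $u_{N}$ and $u_{\infty}$ are differentiable, use the uniform Lipschitz bound to get $\lvert\nabla u_{N}(x)\rvert\le L$, and then show every subsequential limit $v$ lies in $\partial u_{\infty}(x)=\{\nabla u_{\infty}(x)\}$ by passing to the limit in the subgradient inequality. The only cosmetic difference is that the paper works with the null sets $P_{N}$ of non-differentiability and their union, whereas you work with the complementary full-measure sets $A_{N}$ and their intersection.
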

\begin{remark}\upshape\upshape
From (\ref{uniform bound})--(\ref{pointwise conv}), it is easy to show that \(u_{\infty}\) is also convex, \(u_{N}\to u_{\infty}\) uniformly in \(U\), and
\[\lvert u_{\infty}(x)-u_{\infty}(y)\rvert\le L\lvert x-y\rvert\quad\textrm{for all }x,\,y\in U.\]
\end{remark}
Our proof of Lemma \ref{a.e. pointwise convergence for convex functions} is inspired by \cite[Lemma A.3]{MR1671993}.
\begin{proof}
We define \({\mathcal L}^{n}\)-measurable sets
\[P_{N}\coloneqq \{x\in U\mid u_{N} \textrm{ is not differentiable at }x\}\textrm{ for }N\in{\mathbb N}\cup\{\infty\}.\]
Clearly \(P_{N}\,(N\in{\mathbb N}\cup\{\infty\})\) satisfies \({\mathcal L}^{n}(P_{N})=0\) by Lipschitz continuity of $u_{N}$, and therefore the \({\mathcal L}^{n}\)-measurable set \[P\coloneqq \bigcup\limits_{N\in {\mathbb N}\cup\{\infty\}}P_{N}\subset U\] also satisfies \({\mathcal L}^{n}(P)=0\). We claim that 
\begin{equation}\label{pointwise convergence without a null-set}
\nabla u_{N}(x_{0})\rightarrow \nabla u_{\infty}(x_{0})\quad\textrm{for all }x_{0}\in U\setminus P.
\end{equation}
We take and fix arbitrary \(x_{0}\in U\setminus P\). We note that \(\nabla u_{N}(x_{0})\) exists for each \(N\in{\mathbb N}\) since \(x_{0}\not\in P_{N}\), and we obtain
\[\sup\limits_{N\in{\mathbb N}}\,\lvert \nabla u_{N}(x_{0})\rvert\le L\]
with the aid of (\ref{uniform bound}). Hence it suffices to check that, if a subsequence \(\{u_{N_{k}}\}_{k}\subset \{u_{N}\}_{N}\) satisfies
\begin{equation}\label{full seq conv}
\nabla u_{N_{k}}(x_{0})\rightarrow v\,(k\to\infty)\quad \textrm{for some }v\in {\mathbb R}^{N},\end{equation}
then \(v=\nabla u_{\infty}(x_{0})\).
Since \(x_{0}\not\in P_{N_{k}}\) and therefore \(\partial u_{N_{k}}(x_{0})=\{\nabla u_{N_{k}}(x_{0})\}\) for each \(k\in{\mathbb N}\), we easily get 
\[u_{N_{k}}(x)\ge u_{N_{k}}(x_{0})+\langle\nabla u_{N_{k}}(x_{0})\mid x-x_{0} \rangle\quad \textrm{for all }x\in U,\, k\in{\mathbb N}.\]
Letting \(k\to\infty\), we have
\[u_{\infty}(x)\ge u_{\infty}(x_{0})+\langle v\mid x-x_{0} \rangle\quad \textrm{for all }x\in U\]
by (\ref{pointwise conv}) and (\ref{full seq conv}).
This means that \(v\in\partial u_{\infty}(x_{0})\).
Note again that \(x_{0}\not\in P_{\infty}\) and therefore \(\partial u_{\infty}(x_{0})=\{\nabla u_{\infty}(x_{0})\}\), which yields \(v=\nabla u_{\infty}(x_{0})\). This completes the proof of (\ref{pointwise convergence without a null-set}).\end{proof}
\subsection{Convex functionals}\label{Subsect convex deg 1}
We prove some basic property of convex functionals \(\Psi\) and \(W\) in Section \ref{Sect Generalization}.
\begin{lemma}\label{lemma strict monotonicity}
Let \(W\) be a convex function which satisfies (\ref{regularity assumptions})-(\ref{ellipticity on W}) and (\ref{gradW at 0}). Then the mapping \(A\colon {\mathbb R}^{n}\ni z\mapsto \nabla W(z)\in {\mathbb R}^{n}\) satisfies strict monotonicity (\ref{strict monotonicity}).
\end{lemma}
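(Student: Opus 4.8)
The plan is to reduce strict monotonicity of $A=\nabla_z W$ to two ingredients: the quantitative convexity coming from (\ref{ellipticity on W}) along any segment that stays away from the origin, and the auxiliary inequality $\langle A(z)\mid z\rangle>0$ for $z\neq 0$ (which is precisely (\ref{claim for gradW})), obtained by integrating along the ray from $0$ to $z$. The only genuine difficulty is that $W$ is merely $C^{1}$, and not $C^{2}$, at the origin, so the fundamental-theorem-of-calculus identity for $\langle \nabla_z^{2}W\,(z_{2}-z_{1})\mid z_{2}-z_{1}\rangle$ is not available on segments that pass through $0$; everything else is the classical computation ``positive definite Hessian implies strictly monotone gradient''.

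First I would record that $A$ is monotone, i.e. $\langle A(z_{2})-A(z_{1})\mid z_{2}-z_{1}\rangle\ge 0$ for all $z_{1},z_{2}$, since $W$ is convex and $C^{1}$; equivalently, for each fixed $z\neq 0$ the scalar function $g(t):=\langle \nabla_z W(tz)\mid z\rangle$ is nondecreasing on $[0,1]$. Next, still for fixed $z\neq 0$, I would observe that $g$ is continuous on $[0,1]$ (by $W\in C^{1}(\mathbb R^{n})$) and $C^{1}$ on $(0,1]$ with $g'(t)=\langle \nabla_z^{2}W(tz)z\mid z\rangle$, because $W\in C^{2}(\mathbb R^{n}\setminus\{0\})$ and $tz\neq 0$ for $t\in(0,1]$. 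Applying (\ref{ellipticity on W}) with $\mu=|z|/2$ and $M=|z|$ produces a constant $\gamma_{0}=\gamma_{0}(|z|)>0$ such that $g'(t)\ge \gamma_{0}|z|^{2}$ for $t\in[1/2,1]$, hence $g(1)-g(1/2)=\int_{1/2}^{1}g'(t)\,dt\ge \tfrac12\gamma_{0}|z|^{2}$. Since $g$ is nondecreasing we have $g(1/2)\ge g(0)$, and $g(0)=\langle \nabla_z W(0)\mid z\rangle=0$ by (\ref{gradW at 0}); therefore
\[
\langle A(z)\mid z\rangle = g(1)-g(0)\ \ge\ \tfrac12\gamma_{0}|z|^{2}\ >\ 0,
\]
which is exactly (\ref{claim for gradW}).

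Finally I would treat a general pair $z_{1}\neq z_{2}$ by distinguishing whether the closed segment $[z_{1},z_{2}]$ joining them meets $0$. If $0\notin[z_{1},z_{2}]$, then $\mu:=\min_{t\in[0,1]}|(1-t)z_{1}+tz_{2}|>0$ and $M:=\max_{t\in[0,1]}|(1-t)z_{1}+tz_{2}|<\infty$, so $h(t):=\langle \nabla_z W((1-t)z_{1}+tz_{2})\mid z_{2}-z_{1}\rangle$ is $C^{1}$ on $[0,1]$ and (\ref{ellipticity on W}) gives $h'(t)\ge \gamma|z_{2}-z_{1}|^{2}$ with $\gamma=\gamma(\mu,M)>0$, whence $\langle A(z_{2})-A(z_{1})\mid z_{2}-z_{1}\rangle=h(1)-h(0)\ge \gamma|z_{2}-z_{1}|^{2}>0$. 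If $0\in[z_{1},z_{2}]$ with one endpoint equal to $0$, the claim is precisely $\langle A(z)\mid z\rangle>0$ for the other (nonzero) endpoint $z$, using $A(0)=0$. If $0$ lies strictly between $z_{1}$ and $z_{2}$, write $z_{1}=-aw$ and $z_{2}=bw$ with $a,b>0$ and $|w|=1$; then $z_{2}-z_{1}=(a+b)w$ and
\[
\langle A(z_{2})-A(z_{1})\mid z_{2}-z_{1}\rangle = (a+b)\Bigl(\tfrac1b\langle A(z_{2})\mid z_{2}\rangle+\tfrac1a\langle A(z_{1})\mid z_{1}\rangle\Bigr)>0
\]
by the previous paragraph applied to the nonzero vectors $z_{1}$ and $z_{2}$. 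This exhausts all cases and yields strict monotonicity (\ref{strict monotonicity}). The step I expect to be the crux is the ray argument near the origin, where the absence of second-order regularity at $0$ forces one to combine the plain monotonicity of $A$ on $[0,\tfrac12 z]$ with the quantitative ellipticity only on $[\tfrac12 z,z]$, rather than integrating the Hessian across $0$.
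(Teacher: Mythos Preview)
Your proof is correct and follows essentially the same route as the paper: both arguments split on whether the segment $[z_{1},z_{2}]$ contains the origin, handle the case $0\notin[z_{1},z_{2}]$ by integrating the Hessian along the segment, and reduce the case $0\in[z_{1},z_{2}]$ to the ray inequality $\langle A(z)\mid z\rangle>0$. The only difference is in how that ray inequality is obtained: the paper uses a dyadic telescoping sum $\sum_{N}\langle A(z/2^{N-1})-A(z/2^{N})\mid z\rangle$ (each term positive by the first case) together with continuity of $A$ at $0$, whereas you use plain monotonicity of $A$ (from convexity of $W$) on $[0,z/2]$ and the quantitative ellipticity on $[z/2,z]$; both are equally valid and equally short.
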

\begin{proof}
We take arbitrary \(z_{1},\,z_{2}\in{\mathbb R}^{n}\) with \(z_{1}\not= z_{2}\) and define a line segment \(L\coloneqq \mleft\{z_{1}+t(z_{2}-z_{1})\in{\mathbb R}^{n}\mathrel{}\middle|\mathrel{}0\le t\le 1\mright\}.\)

We first consider the case \(0\not\in L\). Then there exist constants \(0<\mu\le M<\infty\) such that \(\mu\le \lvert z_{0}\rvert\le M\) holds for all \(z_{0}\in L\). Here we can take a constant \(\gamma>0\) such that (\ref{ellipticity on W}) holds for all \(z_{0}\in L\). Then by \(W\in C^{2}({\mathbb R}^{n}\setminus\{ 0\})\), we have
\[\mleft\langle A(z_{1})-A(z_{2})\mathrel{}\middle|\mathrel{}z_{2}-z_{1} \mright\rangle=\int_{0}^{1}\mleft\langle\nabla_{z}^{2}W(z_{1}+t(z_{2}-z_{1}))(z_{2}-z_{1})\mathrel{}\middle|\mathrel{} z_{2}-z_{1}\mright\rangle\,dt\ge \gamma \lvert z_{2}-z_{1}\rvert^{2}>0.\]

To consider the remaining case \(0\in L\), it suffices to show (\ref{claim for gradW}).
Indeed, the assumption \(0\in L\) allows us to write \(z_{1}=-l_{1}\nu,\,z_{2}=l_{2}\nu\) for some unit vector \(\nu\) and some constants \(l_{1},\,l_{2}\ge 0\). Under this notation, we obtain
\[\langle A(z_{2})-A(z_{1})\mid z_{2}-z_{1}\rangle=\langle A(l_{2}\nu)\mid (l_{1}+l_{2})\nu\rangle+\langle A(-l_{1}\nu)\mid -(l_{1}+l_{2})\nu\rangle>0\]
by (\ref{claim for gradW}). Here we note that at least one of \(l_{1},\,l_{2}\) is positive since \(l_{1}+l_{2}=\lvert z_{2}-z_{1}\rvert>0\).

We prove (\ref{claim for gradW}) to complete the proof.
Let \(z\in{\mathbb R}^{n}\setminus\{0\}\). Then we obtain
\[d_{N}\coloneqq \mleft\langle A(z/2^{N-1})-A(z/2^{N})\mathrel{}\middle|\mathrel{} z \mright\rangle>0\]
for each \(N\in{\mathbb N}\), since we have already shown (\ref{strict monotonicity}) for the case \(0\not\in L\). By definition of \(d_{j}\,(j\in{\mathbb N})\), it is clear that
\[\mleft\langle A(z)-A(z/2^{N})\mathrel{}\middle|\mathrel{} z \mright\rangle=d_{1}+\dots+d_{N}\ge d_{1}.\]
Letting \(N\to\infty\), we obtain \(\langle A(z)\mid z\rangle\ge d_{1}>0\) by $A\in C({\mathbb R}^{n},\,{\mathbb R}^{n})$.
\end{proof}
We precisely prove (\ref{triangle ineq})--(\ref{subdifferential expression}) in Lemma \ref{basic lemma of convex deg 1}. See also \cite[Section 1.3]{MR2033382} and \cite[\S 13]{MR1451876} as related items.
\begin{lemma}\label{basic lemma of convex deg 1}
Let \(\Psi\colon {\mathbb R}^{n}\rightarrow [0,\,\infty)\) be a convex function which is positively homogeneous of degree $1$.
\begin{enumerate}
\item \(\Psi\) satisfies the triangle inequality (\ref{triangle ineq}).
\item Assume that \(\zeta\in{\mathbb R}^{n}\) satisfies \({\tilde \Psi}(\zeta)<\infty\). Then the Cauchy--Schwarz-type inequality (\ref{CW ineq}) holds.
\item The subdifferential operator \(\partial\Psi\) is given by (\ref{subdifferential expression}).
\end{enumerate}
\end{lemma}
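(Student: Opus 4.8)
The plan is to derive all three statements directly from the definitions, using only convexity and positive homogeneity of degree $1$; no deep input is needed, and part (2) is the one hinge that will feed into (3).

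For (1), I would apply convexity of $\Psi$ at the midpoint of $2z_1$ and $2z_2$:
\[
\Psi(z_1+z_2)=\Psi\!\left(\tfrac12(2z_1)+\tfrac12(2z_2)\right)\le \tfrac12\Psi(2z_1)+\tfrac12\Psi(2z_2),
\]
and then use (\ref{hom of deg 1}) to rewrite the right-hand side as $\Psi(z_1)+\Psi(z_2)$. This is immediate.

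For (2), fix $\zeta$ with ${\tilde\Psi}(\zeta)<\infty$ and $z\in{\mathbb R}^n$. If $z=0$ both sides vanish. If $\Psi(z)>0$, then $z/\Psi(z)\in C_\Psi$ by homogeneity, so ${\tilde\Psi}(\zeta)\ge \langle \zeta\mid z/\Psi(z)\rangle$, which rearranges to (\ref{CW ineq}). The subcase needing a little care is $z\neq 0$ with $\Psi(z)=0$ (possible since $\Psi$ need not be a norm): then $\lambda z\in C_\Psi$ for every $\lambda>0$, hence ${\tilde\Psi}(\zeta)\ge \lambda\langle \zeta\mid z\rangle$ for all $\lambda>0$, and finiteness of ${\tilde\Psi}(\zeta)$ forces $\langle \zeta\mid z\rangle\le 0=\Psi(z){\tilde\Psi}(\zeta)$. (Note ${\tilde\Psi}\ge 0$ always, since $0\in C_\Psi$.) In every case (\ref{CW ineq}) holds.

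For (3), I would prove the two inclusions. If $\zeta\in\partial\Psi(z)$, testing the subgradient inequality $\Psi(x)\ge\Psi(z)+\langle\zeta\mid x-z\rangle$ at $x=2z$ and at $x=0$, together with $\Psi(2z)=2\Psi(z)$ and $\Psi(0)=0$, yields $\Psi(z)=\langle z\mid\zeta\rangle$; testing then at an arbitrary $x=w$ and using this identity gives $\langle\zeta\mid w\rangle\le\Psi(w)$ for all $w$, hence ${\tilde\Psi}(\zeta)\le 1$ after restricting to $w\in C_\Psi$. Conversely, if ${\tilde\Psi}(\zeta)\le 1$ and $\Psi(z)=\langle z\mid\zeta\rangle$, then for every $x$ part (2) gives $\langle x\mid\zeta\rangle\le\Psi(x){\tilde\Psi}(\zeta)\le\Psi(x)$, so $\Psi(x)\ge\langle\zeta\mid x\rangle=\Psi(z)+\langle\zeta\mid x-z\rangle$, i.e. $\zeta\in\partial\Psi(z)$. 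The only genuinely delicate point in the whole lemma is the degenerate subcase of (2) above; everything else is a short manipulation of the definitions, and (2) is exactly what makes the nontrivial inclusion of (3) work.
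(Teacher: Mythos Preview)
Your proof is correct and follows essentially the same approach as the paper's: the triangle inequality via midpoint convexity plus homogeneity, the Cauchy--Schwarz-type bound by normalizing when $\Psi(z)>0$ and scaling when $\Psi(z)=0$, and the subdifferential characterization by testing the subgradient inequality at dilates of $z$ (you use $x=0,2z$ where the paper uses $x=kz$ for general $k\ge 0$, which is the same idea). The only cosmetic difference is that the paper applies homogeneity on the left in part~(1) and you apply it on the right; the content is identical.
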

\begin{proof}
By convexity of \(\Psi\) and (\ref{hom of deg 1}), \(\Psi\) satisfies \[\frac{\Psi(z_{1}+z_{2})}{2}=\Psi\mleft(\frac{z_{1}+z_{2}}{2}\mright)\le \frac{\Psi(z_{1})+\Psi(z_{2})}{2}\quad\textrm{for all }z_{1},\,z_{2}\in{\mathbb R}^{n},\]
which yields (\ref{triangle ineq}).

We next show the Cauchy--Schwarz inequality (\ref{CW ineq}).
Let \(z\in{\mathbb R}^{n}\). If \(\Psi(z)>0\), then we have
\[\langle z\mid \zeta \rangle=\Psi(z)\mleft\langle \frac{z}{\Psi(z)}\mathrel{}\middle|\mathrel{}\zeta \mright\rangle\le \Psi(z){\tilde \Psi}(\zeta)\]
by \(z/\Psi(z)\in C_{\Psi}\). For the case \(\Psi(z)=0\), we note that \(\lambda z\in C_{\Psi}\) for all \(\lambda>0\). Hence it follows that
\[\langle z \mid \zeta\rangle=\frac{\langle \lambda z\mid \zeta \rangle}{\lambda}\le \frac{{\tilde \Psi}(w)}{\lambda}\]
for all \(\lambda>0\). By \({\tilde \Psi}(\zeta)<\infty\), we obtain \(\langle z\mid \zeta\rangle\le 0=\Psi(z){\tilde \Psi}(\zeta)\). This completes the proof of (\ref{CW ineq}).

Finally we prove (\ref{subdifferential expression}). Let \(z_{0}\in{\mathbb R}^{n}\) be arbitrarily fixed. Assume that \(\zeta\in{\mathbb R}^{n}\) satisfies \({\tilde\Psi}(\zeta)\le 1\) and \(\Psi(z_{0})=\langle z_{0}\mid \zeta\rangle\). Then by combining these assumptions with (\ref{CW ineq}), we have 
\begin{align*}
\Psi(z)&\ge \Psi(z){\tilde\Psi}(\zeta)\\& \ge \langle z\mid\zeta\rangle=\langle z_{0}\mid\zeta\rangle+\langle z-z_{0}\mid\zeta\rangle\\&=\Psi(z_{0})+\langle \zeta\mid z-z_{0}\rangle
\end{align*}
for all \(z\in{\mathbb R}^{n}\). Hence \(\zeta\in\partial\Psi(z_{0})\). Conversely, if \(\zeta\in\partial\Psi(z_{0})\), then we have the subgradient inequality
\begin{equation}\label{subgradient ineq deg 1}
\Psi(z)\ge \Psi(z_{0})+\langle\zeta\mid z-z_{0} \rangle\quad\textrm{for all }z\in{\mathbb R}^{n}.
\end{equation}
By testing \(kz_{0}\) into (\ref{subgradient ineq deg 1}), where \(k\in[0,\,\infty)\) is arbitrary, we have
\begin{equation}
(k-1)\Psi(z_{0})=\Psi(kz_{0})-\Psi(z_{0})\ge \langle\zeta\mid (k-1)z_{0}\rangle=(k-1)\langle\zeta\mid z_{0}\rangle.
\end{equation}
If we let \(0\le k<1\) so that \(k-1<0\), then we have \(\Psi(z_{0})\le \langle\zeta\mid z_{0}\rangle\). Similarly, letting \(1<k<\infty\), we have \(\Psi(z_{0})\ge \langle\zeta\mid z_{0}\rangle\). Hence we obtain \(\Psi(z_{0})=\langle\zeta\mid z_{0}\rangle\). Combining with (\ref{subgradient ineq deg 1}), we have 
\[\langle z\mid \zeta\rangle\le \Psi(z)\quad\textrm{for all }z\in{\mathbb R}^{n},\]
which yields \({\tilde\Psi}(\zeta)\le 1\) by definition of \({\tilde\Psi}\).
This completes the proof of (\ref{subdifferential expression}).
\end{proof}
\bibliographystyle{plain}

\end{document}